\documentclass[a4, 11pt.]{amsart}
\usepackage{amssymb,amscd, hyperref, bm, color }
\usepackage{amsmath}
\usepackage{enumerate}
\usepackage{stmaryrd}  
\usepackage[mathcal]{eucal}
\usepackage{array,float}
\usepackage{longtable}
\usepackage{multirow}
\usepackage{cite}
\usepackage[left=3cm, right=3cm]{geometry}

\usepackage{xy}
\input xy
\xyoption{all}
\usepackage{pdflscape} 
\usepackage{hyperref}
\hypersetup{
	colorlinks=blue,
	linkcolor=blue,
	filecolor=magenta,      
	urlcolor=cyan,
}

\usepackage[draft]{graphicx}

\numberwithin{equation}{section}
\setcounter{tocdepth}{1}

\newtheorem{thm}{Theorem}[section]
\newtheorem{proposition}[thm]{Proposition}

\newtheorem{corollary}[thm]{Corollary}
\newtheorem{lemma}[thm]{Lemma}

\newtheorem*{claim1}{Claim~1}
\newtheorem*{claim2}{Claim~2}
\newtheorem*{claim3}{Claim~3}

\theoremstyle{definition}
\newtheorem{definition}[thm]{Definition}
\newtheorem*{remark*}{Remark}
\newtheorem{remark}[thm]{Remark}

\newtheorem{example}[thm]{Example}

\newcommand{\RNum}[1]{\uppercase\expandafter{\romannumeral #1\relax}}

\newcommand{\SL}{\mathrm{SL}}
\newcommand{\GL}{\mathrm{GL}}

\newcommand{\cO}{\mathfrak{o}}
\newcommand{\mat}[4]{\left[ \begin{matrix}
#1 & #2 \\  #3 & #4 \end{matrix}\right] }

\newcommand{\val}{\mathrm{val}}

\renewcommand{\wp}{\mathfrak{p}}
\renewcommand{\det}{{\mathrm{det}}}

\newcommand{\Char}{\mathrm{Char}}
\newcommand{\til}[1]{\tilde{#1}}
\newcommand{\bol}[1]{\bold{#1}}
\newcommand{\G}{\mathrm{G}}

\renewcommand{\SS}{\mathrm{SS}}
\newcommand{\IR}{\mathrm{IR}}
\newcommand{\NS}{\mathrm{SNS}}
\newcommand{\pr}{\mathrm{pr}}
\newcommand{\calp}{\mathcal{P}}
\newcommand{\dvr}{\mathrm{DVR}}
\newcommand{\dvrtwo}{\dvr_2}
\newcommand{\dvrp}{\dvr_p}
\newcommand{\dvrtwoplus}{{\dvrtwo^+}}
\newcommand{\dvrtwozero}{{\dvrtwo^\circ}}
\newcommand{\ee}{{\mathrm{e}}}

\title[Representation Growth of $\SL_2(\cO)$]{Representation Growth of Compact Special \\ Linear Groups of degree two}

\author{M Hassain}
\address{ MH: Department of Mathematics,
  Indian Institute of Science,
   Bangalore 560012, India. }

\email{hassainm@iisc.ac.in}

\author{Pooja Singla}
\address{ PS:  Department of Mathematics and Statistics,
  Indian Institute of Technology Kanpur,
  Kanpur 208016, India. }
\email{psingla@iitk.ac.in}

\keywords{ Compact Special linear groups of degree two, Representation growth, group algebras, odd level representations of $\SL_2(\cO)$ }

\subjclass[2010]{20C15, 20G25, 20F69, 20H05, 20G05}

\begin{document}
\begin{abstract}

	We study the finite-dimensional continuous complex representations of $\SL_2$ over the ring of integers of non-Archimedean local fields of even residual characteristic. We prove that for characteristic two, the abscissa of convergence of the representation zeta function is 1, resolving the last remaining open case of this problem. We additionally prove that, contrary to the expectation, the group
	algebras of $\mathbb C[\SL_2(\mathbb Z/(2^{2 r} \mathbb Z))]$ and  $\mathbb C[\SL_2(\mathbb F_2[t]/(t^{2r}))]$ are not
	isomorphic for any $r > 1$. 
	This is the first known class of
	reductive groups over finite rings wherein the representation
	theory in the equal and mixed characteristic settings is genuinely different. From our methods, we explicitly obtain the primitive representation zeta polynomials of $\SL_2\left (\mathbb F_2[t]/(t^{2r}) \right) $ and $\SL_2\left (\mathbb Z/2^{2r} \mathbb Z \right) $ for $1 \leq r \leq 3$.

\end{abstract}
\maketitle

\tableofcontents

\section{Introduction} 
\label{sec:introduction} 
Let $F$ be a non-Archimedean local field with ring of integers $\cO,$ maximal ideal $\wp,$ and finite residue field $\mathbb F_q$ of characteristic $p.$ The finite quotient $\cO/\wp^\ell$ is denoted by $\cO_\ell.$  Let $\GL_n(\cO) $ be the group of $n \times n$ invertible matrices with entries from $\cO.$ Let $\SL_n(\cO)$ be the subgroup of $\GL_n(\cO)$ consisting of all determinant one matrices. The group $\GL_n(\cO)$ is a maximal compact subgroup of $\GL_n(F).$ It is well known that the continuous irreducible representations of $\GL_n(\cO)$ play an important role in explicitly constructing 
the supercuspidal representations of $\GL_n(F)$ (see Howe~\cite{MR0492087}, Kutzko~\cite{MR0507253}, and Paskunas~\cite{MR2180458}). In the recent past, there has been considerable work towards understanding the continuous irreducible representations of $\GL_n(\cO)$ and $\SL_n(\cO),$ see
 Chen-Stasinski~\cite{MR3703469}, Hill~\cite{MR2739795}, Krakovski-Onn-S.~\cite{MR1311772}, Stasinski~\cite{MR3737836}, Stasinski-Stevens~\cite{MR3743488}.

In this article, our focus is on the construction of the finite-dimensional continuous complex irreducible representations of the rigid groups $\SL_2(\cO)$ and to describe their representation growth. Recall that a group $G$ is called {\it rigid} if for every $n \in \mathbb N,$ the group $G$ has a finite number, say $r_n(G),$ of inequivalent irreducible representations of dimension $n.$  
For a rigid group $G$ and $s \in \mathbb C,$ the Dirichlet's series 
\[
\zeta_G(s) = \sum_{\rho \in \mathrm{Irr}(G)} \frac{1}{\dim(\rho)^s}
\]
is called the {\it representation zeta function of $G.$} The {\it abscissa of convergence} $\alpha(G)$ of $\zeta_G(s)$ is the infimum of all $\alpha \in \mathbb R$ such that $\zeta_G(s)$ converges
in the complex half-plane $\{s \mid \mathrm{Re}(s) > \alpha(G)    \}.$ For a rigid group $G$ and $n \in \mathbb N$ the function $n \mapsto r_n(G)$ is called the { \it representation growth function} of $G.$ The group $G$ is said to have {\it polynomial representation growth} (PRG) if the sequence $R_N(G) = \sum_{n =1}^N r_n(G)$ is bounded by a polynomial in $N.$ The
representation growth of a group with PRG can be studied using its representation zeta function. 
The abscissa of convergence $\alpha(G)$ of the series $\zeta_G(s)$ gives the precise degree of polynomial
growth in the sense that $\alpha(G)$ is the smallest value such that $R_N(G) = O(1 + N^{\alpha(G)+\epsilon})$ for every
$\epsilon \in \mathbb R^{>0}$ (see Voll~\cite[Section~3.2]{voll2009newcomers}).

The representation growth of arithmetic
groups and the associated abscissa of convergence was first studied by Larsen and Lubotzky in \cite{MR2390327}. The question of understanding representation growth of $p$-adic analytic groups occurs naturally in their work. A compact
$p$-adic analytic group $G$ is rigid if and only if it is {\it FAb}, that is, if every open
subgroup has finite abelianization (see Bass-Lubotzky-Magid-Mozes~\cite[Proposition 2]{MR1950883}). Making use of model theory,
Jaikin-Zapirain  \cite{MR2169043} proved that for $p > 2,$ the representation zeta function of a FAb
compact $p$-adic analytic group is a rational function in $p^{-s}.$  This means
that the coefficients of the Dirichlet generating function satisfy a linear recurrence relation. He also proved that  $\alpha(\SL_2(\cO)) = 1$ for $p \neq 2.$  It is also known that $\alpha(\SL_2(\cO)) = 1$ for $\Char(\cO) = 0$ and $p=2$, see Avni-Klopsch-Onn-Voll~\cite{MR2858928}. Therefore the remaining cases in this direction are of $\Char(\cO) =2$. It is already known that 
$\alpha(\SL_2(\cO)) \in [1, 22]$ for all rings $\cO,$ where the lower bound is by Larsen and Lubotzky~\cite[Proposition~6.6]{MR2390327} and the upper bound is by Aizenbud-Avni~\cite{MR3480557}. Recently, H\"{a}s\"{a}-Stasinski in \cite{2017arXiv171009112H}, while studying the twist representation zeta functions of $\GL_n(\cO)$ and relating these  with the representation zeta function of $\SL_n(\cO),$ improved the bounds for $\alpha(\SL_2(\cO))$ and proved that $\alpha(\SL_2(\cO)) \in [1, 5/2]$ for all $\cO.$ They also gave a new proof of the fact that $ \alpha(\SL_2(\cO)) = 1$ for $\Char(\cO ) = 0$. It is clear from their work that the case of $\SL_2(\cO)$ for $\Char(\cO)=2$ is more complicated than that of $\Char(\cO)=0$ or $\Char(\cO)=p\neq 2$. We complete the above results by determining $\alpha(\SL_2(\cO))$ for all $\cO$ such that $\Char(\cO)=2.$ More specifically, we prove the following result regarding $\alpha(\SL_2(\cO)).$  
\begin{thm}
	\label{thm:abs-of-convergence}
	Let $\cO$ be a compact discrete valuation ring with finite residue field such that $2 \mid |\cO/\wp|$. Then the abscissa of convergence of the representation zeta function of $\SL_2(\cO)$ is $1$ for all $\cO.$   
\end{thm}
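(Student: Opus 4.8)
The plan is to prove the two bounds $\alpha(\SL_2(\cO)) \ge 1$ and $\alpha(\SL_2(\cO)) \le 1$ separately. The lower bound is available for every $\cO$ from \cite[Proposition~6.6]{MR2390327}, and the upper bound is already known when the residue characteristic is odd \cite{MR2169043} and when $\Char(\cO)=0$ with $p=2$ \cite{MR2858928, 2017arXiv171009112H}. Hence the entire content is to prove $\alpha(\SL_2(\cO)) \le 1$ in the remaining even residue characteristic cases --- most importantly $\cO \cong \mathbb F_q[[t]]$ with $q$ even --- which amounts to showing that $\zeta_{\SL_2(\cO)}(s)$ converges for every real $s>1$.

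First I would stratify the continuous irreducible representations of the profinite group $\SL_2(\cO)$ by level. Every such representation factors through a finite quotient $\SL_2(\cO_\ell)$, and since $\zeta_{\SL_2(\cO)}(s)$ is the increasing limit of the finite-level partial sums, it is enough to bound, uniformly in $\ell$ and in $s>1$, the contribution of the representations that are \emph{primitive} of level $\ell$, i.e.\ those not factoring through $\SL_2(\cO_{\ell-1})$. For an even level $\ell = 2r$ I would analyse these by Clifford theory relative to the principal congruence subgroup $K_r = \ker\!\big(\SL_2(\cO_{2r}) \to \SL_2(\cO_r)\big)$, which is an abelian normal subgroup (elementary abelian in the equal-characteristic case). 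The group $\SL_2(\cO_r)$ acts on the characters of $K_r$ by conjugation, and for each character the relevant data are its orbit and the structure --- in particular the abelianisation --- of its stabiliser; together these determine the dimensions of the representations lying above it and their number. The odd levels $\ell = 2r+1$ I would treat by a parallel argument, comparing them with the adjacent even level, so that they contribute the same kind of bound.

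The key input, and the point where the real difficulty lies, is an explicit construction of these primitive representations in even residue characteristic. For odd $p$ the orbit method settles everything: the trace form identifies $\mathfrak{sl}_2$ with its dual, each character carries a nondegenerate alternating form, and the Stone--von Neumann/Heisenberg formalism produces the representations together with a transparent dimension formula. When $p = 2$ this breaks down at every step --- the trace form on $\mathfrak{sl}_2$ is degenerate, the distinction between alternating and symmetric forms becomes essential, the exponential correspondence between the group and its Lie algebra is unavailable, and the Heisenberg-type quotients that arise require a genuine quadratic refinement rather than a mere bilinear one. Supplying, in this setting, a complete and explicit list of the primitive representations of $\SL_2(\mathbb F_q[t]/(t^{2r}))$ for all $r \ge 1$ and of $\SL_2(\cO/\wp^{2r})$ for $r \ge 2\,\mathrm{e}(\cO)$, with their exact dimensions and multiplicities, is the substantial part of the work; I would take this from the construction theorems established later in the paper.

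Granting the construction, the convergence step is essentially bookkeeping. The dimensions of the primitive level-$\ell$ representations are, up to bounded factors, powers $q^{k}$ with $k \le \ell$, and the construction controls, for each dimension $D$, the number of primitive representations of that dimension. The crucial estimate to extract is that this number is $O(D^{1+\epsilon})$, uniformly over all levels; the extremal stratum is that of the regular (split) characters, which produces roughly $q^{\ell}$ representations of dimension roughly $q^{\ell}$ and hence contributes about $q^{\ell}\cdot q^{-\ell s} = q^{\ell(1-s)}$ to the level-$\ell$ partial sum, while every other stratum has a strictly smaller ratio of count to dimension. Summing over $\ell$ then dominates $\zeta_{\SL_2(\cO)}(s)$ by a convergent geometric series exactly when $s>1$, giving $\alpha(\SL_2(\cO)) \le 1$. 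Combined with the lower bound this yields $\alpha(\SL_2(\cO)) = 1$ for all $\cO$; the same regular stratum forces divergence for $s \le 1$, confirming that the bound is sharp.
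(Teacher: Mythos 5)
Your overall skeleton (level decomposition, Clifford theory over the congruence kernel, stratification of characters by orbit type, geometric-series summation) matches the paper's, but there is a genuine gap at the step you call the key input. You propose to take ``a complete and explicit list of the primitive representations \ldots with their exact dimensions and multiplicities'' from the construction theorems and then reduce convergence to bookkeeping. The paper has no such list at all levels: the full construction (Theorem~\ref{main-theorem-2}) is proved only for $r=2\ell$, i.e.\ only for odd-level representations, and only for $r\geq \mathbf R_\cO$; for odd $r$ the sets $\mathrm{Irr}(C_{\SL_2(\cO_r)}(\psi_{[A]})\mid \psi_{[A]})$ remain undetermined when $p=2$, as the paper states explicitly in Section~\ref{sec: basic-framework}. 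Your fallback for the remaining levels --- ``comparing them with the adjacent even level'' --- has no mechanism behind it: primitive representations of $\SL_2(\cO_r)$ do not embed into or control those of $\SL_2(\cO_{r\pm1})$ in any way that transfers dimension counts. What the paper actually does is substitute a uniform \emph{bound} for the missing construction: Theorem~\ref{main-theorem} shows $|H_{\tilde{A}}(\chi)|/|C_S^\ell(\tilde A)|\leq q^3$ for every $r\geq \mathbf R_\cO$ of either parity, and Proposition~\ref{min-max- of-dim-an d-nA} then pins $\dim(\rho)$ and $n_A=|\mathrm{Irr}(\SL_2(\cO_r)\mid\psi_{[A]})|$ up to bounded powers of $q$. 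These asymptotic bounds, not the explicit construction, are what feed the zeta computation.

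A second inaccuracy sits in your extremal-stratum claim that the regular split characters dominate and ``every other stratum has a strictly smaller ratio of count to dimension.'' In the paper's computation the split-semisimple and irreducible strata each contribute $\asymp q^{r(1-s)}$, but in characteristic $2$ the split non-semisimple stratum aggregates to $\zeta^{\NS}_{S_r}(s)\asymp q^{\frac{3r}{4}(1-s)}$, which for $s>1$ is \emph{larger} than $q^{r(1-s)}$ and so dominates the tail; convergence for $s>1$ still holds, but this is exactly the delicate stratum. Controlling it requires the fine parametrization by the invariants $\bol k$ and $\bol s$, the centralizer cardinalities of Lemma~\ref{lem:Alex-1}, and the orbit counts of Lemmas~\ref{lem:Alex-2} and~\ref{lem:Alex-3} imported from H\"as\"a--Stasinski, followed by the two-variable summation over $\bol k$ and $\lfloor \bol s/2\rfloor$ carried out in Section~\ref{sec:abs-of-conv}. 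None of this is ``essentially bookkeeping'' recoverable from your sketch, and your stated dominance heuristic would mislocate the difficulty entirely.
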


For the proof, we first study the orbits and stabilizers of certain $\SL_2(\cO_r)$ action  in Sections~\ref{sec:Stabilizer-of-Sl2-action} and \ref{sec:orbits-stabilizers}. Then in
 Section~\ref{sec:proof-of-main-thms}, we study the numbers and dimensions of irreducible representations of $\SL_2(\cO_r)$. We use this to obtain a proof of the above result in Section~\ref{sec:abs-of-conv}.  We refer the reader to Avni~\cite{MR2831112}, Avni-Klopsch-Onn-Voll~\cite{MR3011874} and H\"{a}s\"{a}-Stasinski~\cite{2017arXiv171009112H} for more results on abscissa of convergence of compact linear groups. 

For a prime $p$, let $\dvrp$ denote the set of all compact discrete valuation rings with finite residue field  of characteristic $p.$ Let  $\dvrp^{\circ} = \{ \cO \in \dvrp \mid \Char(\cO) = 0   \}$ and  $\dvrp^{+} = \{ \cO \in \dvrp \mid \Char(\cO) = p   \}$. For $\cO \in \dvrp^{\circ}$, we say $\cO$ has ramification index $\ee$ if the ramification index of $F$ over $\mathbb Q_2$ is $\ee$, that is $p \cO = \pi^\ee \cO$. 
A construction of all irreducible representations of $\SL_2(\cO)$ for $\cO \in \dvr_p$ with $p \neq 2$ is already known, see Jaikin-Zapirain~\cite{MR2169043}. An important ingredient in this construction is the fact that the trace is a non-degenerate bilinear form on the set  $\mathrm{sl}_2(\mathbb F_q) = \{   X \in M_2(\mathbb F_q) \mid \mathrm{trace}(X) = 0 \}$ for odd $q$. But this does not hold for $p=2$ and therefore $p = 2$ requires different methods to deal with. A construction of irreducible representations of $\SL_2(\mathbb Z_2)$ has appeared in a series of articles by Nobs and Nobs-Wolfart~\cite{MR0444787,  MR0429742,  MR0387432, MR0444788}. The methods adopted in these articles were dependent on the structure of $\mathbb Z_2$  and therefore  can not be easily extended to other rings such as $\mathbb F_2 \llbracket t \rrbracket .$

We give a  
construction of ``primitive'' irreducible representations of  
$\SL_2( \cO/(\wp^{2 r}))$ for all  $\cO \in \dvrtwoplus$ with $r \geq 1$, and for $\cO \in \dvrtwozero$ with ramification index $\ee$ and $r \geq 2\ee$.  (see Theorem~\ref{main_theorem-2}).   This construction was also partially discussed in H\"{a}s\"{a}-Stasinski~\cite{2017arXiv171009112H}. The specific benefit we harness from our approach is to obtain information regarding the dimensions of irreducible representations. 
This helps us to prove the following result. 
\begin{thm}
	\label{thm:group-algebras}
	 Let $\cO \in \dvrtwozero$ with ramification index $\ee$, $\cO' \in \dvrtwoplus$ such that $\cO/\wp \cong \cO'/\wp',$ Then the group algebras $\mathbb C[\SL_2(\cO_{2r})]$ and  $\mathbb C[\SL_2( \cO'_{2r})]$  are not isomorphic for any $r > \ee.$ 
\end{thm}
 In particular: 
\begin{corollary}
The group algebras  $\mathbb C[\SL_2(\mathbb Z/(2^{2 r} \mathbb Z))]$ and  $\mathbb C[\SL_2(\mathbb F_2[t]/(t^{2r}))]$ are not isomorphic for any $r > 1.$  
\end{corollary}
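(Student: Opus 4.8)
The plan is to deduce this statement directly from Theorem~\ref{thm:group-algebras} by making the two specific choices $\cO = \mathbb Z_2$ and $\cO' = \mathbb F_2[[t]]$. First I would verify that these rings satisfy all the hypotheses of the theorem. Both are complete discrete valuation rings with residue field $\mathbb F_2$, so $\cO/\wp \cong \cO'/\wp' \cong \mathbb F_2$ and $2 \mid |\cO/\wp| = 2$. The ring $\mathbb Z_2$ has characteristic zero, and since its maximal ideal is $2\mathbb Z_2$ we have $2\cO = \wp$, whence its ramification index is $\mathrm{e} = 1$. The ring $\mathbb F_2[[t]]$ has characteristic two. Thus all the hypotheses are met, with $\mathrm{e} = 1$.

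The remaining point is to identify the finite quotients appearing in Theorem~\ref{thm:group-algebras} with the groups in the statement. For $\cO = \mathbb Z_2$ one has $\cO_{2r} = \mathbb Z_2/(2^{2r}\mathbb Z_2) \cong \mathbb Z/(2^{2r}\mathbb Z)$, and for $\cO' = \mathbb F_2[[t]]$ one has $\cO'_{2r} = \mathbb F_2[[t]]/(t^{2r}) \cong \mathbb F_2[t]/(t^{2r})$. Consequently $\SL_2(\cO_{2r}) \cong \SL_2(\mathbb Z/(2^{2r}\mathbb Z))$ and $\SL_2(\cO'_{2r}) \cong \SL_2(\mathbb F_2[t]/(t^{2r}))$, so the two group algebras in question are precisely the ones treated in Theorem~\ref{thm:group-algebras}.

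Applying Theorem~\ref{thm:group-algebras} with $\mathrm{e} = 1$ then gives that $\mathbb C[\SL_2(\mathbb Z/(2^{2r}\mathbb Z))]$ and $\mathbb C[\SL_2(\mathbb F_2[t]/(t^{2r}))]$ fail to be isomorphic for every $r > \mathrm{e} = 1$, which is exactly the claim. The corollary therefore presents no genuine obstacle beyond checking that $\mathbb Z_2$ and $\mathbb F_2[[t]]$ form an admissible pair and that $\mathrm{e} = 1$ in this case; all the substantive content is already contained in the proof of Theorem~\ref{thm:group-algebras}.
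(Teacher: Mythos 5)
Your proposal is correct and is exactly the paper's (implicit) argument: the corollary is the specialization of Theorem~\ref{thm:group-algebras} to $\cO = \mathbb Z_2$ and $\cO' = \mathbb F_2[[t]]$, where the residue fields are both $\mathbb F_2$ and the ramification index of $\mathbb Z_2$ is $\mathrm{e}=1$, so the bound $r > \mathrm{e}$ becomes $r > 1$. Your verification of the hypotheses and identification of the finite quotients matches what the paper leaves to the reader.
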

See Theorem~\ref{thm:zeta poly not eaqal- gp algebra} for another formulation of Theorem~\ref{thm:group-algebras} and Section~\ref{sec:group-algebras} for its proof.  In Section~\ref{sec:Examples}, we prove that $\mathbb C[\SL_2(\mathbb Z/(2^{2}))] \cong \mathbb C[\SL_2(\mathbb F_2[t]/(t^{2}))]$ and therefore the bound $r> \ee$  is sharp for $\cO = \mathbb Z_2$ and $\cO' = \mathbb F_2 \llbracket t \rrbracket .$  These results are  surprising in view of the following well known isomorphisms of group algebras. 
\begin{enumerate}
\item  $\mathbb C[\GL_2(\mathbb Z/(p^{r} \mathbb Z))] \cong \mathbb C[\GL_2(\mathbb F_p[t]/(t^{r}))]$ for all prime $p$ and for all $r \geq 1.$
\item $\mathbb C[\SL_2(\mathbb Z/(p^{r} \mathbb Z))] \cong \mathbb C[\SL_2(\mathbb F_p[t]/(t^{r}))]$ for all prime $p$ such that $2 \nmid p$ and for all $r \geq 1.$ 
\end{enumerate}
 See Onn~\cite{MR2456275} and Stasinski-Stevens~\cite{MR3743488} for independent proofs of (1) and see Jaikin-Zapirain~\cite{MR2169043} and Krakovski-Onn-S.~\cite{MR3737836} for proof of (2). The isomorphisms of (1) and (2) hold more generally for compact rings $\cO$ and $\cO'$ such that their residue fields are isomorphic. It is interesting to recall the following related conjecture of Onn~\cite[Conjecture~1.2]{MR2456275}): 
 
 \medskip 
 {\it The group algebras $\mathbb C[\GL_n(\cO_r)]$ and $\mathbb C[\GL_n(\cO'_r)]$ are isomorphic
 for compact valuation rings $\cO$ and $\cO'$ such that their residue fields are isomorphic. }
 \medskip 
 
 This conjecture is proved to be true for $\GL_2(\cO_r)$ in Onn~\cite{MR2456275} for $r \geq 1$ and for $\GL_n(\cO_2)$ in ~\cite{MR2684153}. Theorem~\ref{thm:group-algebras} indicates that this conjecture when extended to $\SL_n$ instead of $\GL_n$ will require some modification with respect to the characteristic of the rings $\cO$ and $\cO'.$ 
We here briefly outline the idea of the proof of  Theorem~\ref{thm:group-algebras}. For a finite group $G,$  let $n^{max}(G)$ be the highest possible dimension of an irreducible representation of $G$ and $\#n^{max}(G)$ be the number of inequivalent irreducible representations of $G$ of dimension $n^{max}(G).$ We prove that for $\cO$ and $\cO'$ as given in Theorem~\ref{thm:group-algebras},  $n^{max}(\SL_2(\cO_{2r})) = n^{max}(\SL_2(\cO'_{2r}))$ and $\#n^{max}(\SL_2(\cO_{2r})) \neq \# n^{max}(\SL_2(\cO'_{2r})) .$

At last, in Section~\ref{sec:Examples}, we consider the examples of $\SL_2(\cO_i)$ for $\cO =  \mathbb Z_2$ or $\mathbb F_2 \llbracket t \rrbracket $ and $i = 2, 4$ and $6$ in detail.
 We describe the dimensions of all primitive, irreducible representations of these groups and obtain their primitive representation zeta polynomials, see Section~\ref{subsec:primitive-zeta-polynomials}. In particular, we obtain the following information about $n^{\mathrm{max}}$ and $\#n^{\mathrm{max}}$ in these cases. 
    \[
    	\begin{array}{|c|c|c|}
    	\hline
    		G & n^{max} (G) & \# {n^{\max}} (G)\\
    		\hline 
    		\SL_2(\cO_2) &  3 & \begin{cases}
    		  4&  \mathrm{for} \,\, \cO = \mathbb Z_2 \\
    		   4 & \mathrm{for} \,\, \cO = F_2 \llbracket t \rrbracket
    		\end{cases}     \\ 
    		\hline 
    		\SL_2(\cO_4) & 24  & \begin{cases}
    	2	& \mathrm{for} \,\, \cO = \mathbb Z_2 \\
    	1	&\mathrm{for} \,\, \cO = F_2 \llbracket t \rrbracket
    		\end{cases}     \\ 
    		\hline 
    		\SL_2(\cO_6) &  96 & \begin{cases}
    	8	& \mathrm{for} \,\, \cO = \mathbb Z_2 \\
    	6 	& \mathrm{for} \,\, \cO = F_2 \llbracket t \rrbracket 
    		\end{cases}     \\ 
    		\hline 
    	\end{array}     
    \]

    From above, it follows that  the group algebras $\mathbb{C}[\SL_2(\mathbb Z/ 2^{2r}\mathbb Z)]$  and $\mathbb C[\SL_2(\mathbb F_2[t]/(t^{2r}))]$ are not isomorphic for $1 < r \leq 3,$ see also Section~\ref{subsec:primitive-zeta-polynomials}. In view of Theorem~\ref{thm:group-algebras}, Theorem~\ref{thm:abs-of-convergence} indicates that although the structure of group algebras of the finite quotients $\SL_2(\cO_r)$ of $\SL_2(\cO)$ depend on the characteristic of the ring $\cO$ for $p = 2,$ the abscissa of convergence of $\SL_2(\cO)$ is uniform for all $\cO$ and $p.$ In the next section, we describe the basic framework including notations used in this article and also summarize the main ideas towards the proof of Theorem~\ref{thm:abs-of-convergence}.

\subsection*{Acknowledgement:} This work is supported in part by the UGC CAS-II grant. The authors would like to thank Alexander Stasinski and J. $\mathrm H\ddot{\mathrm a}\mathrm s\ddot{\mathrm a}$ immensely  for sharing their preprint of ~\cite{2017arXiv171009112H} and providing many helpful comments on this article. 
They thank Uri Onn and Dipendra Prasad for the encouragement and helpful comments. The authors heartily thank the anonymous referee for constructive feedback on this article, which greatly improved this article's presentation. The second named author acknowledges the SERB MATRICS grant's support (MTR/2018/000094). 

\section{Basic framework and outline of ideas} 
\label{sec:basic-framework} Recall that $F$ is a non-Archimedean local field with ring of integers $\cO,$ maximal ideal $\wp$ and finite residue field $\mathbb F_q$ of characteristic $p.$ The finite quotients $\cO/\wp^k$ are denoted by $\cO_k.$ For $\G = \GL_n$ or $\SL_n,$ 
it is well known that for every finite dimensional complex continuous irreducible representation $\rho$ of $\G(\cO),$ there exists a smallest natural number $r = r(\rho)$ such that $\rho$ factors through the principal congruence quotient $\G(\cO_{r+1}).$ In this case, we say $\rho$ is a representation of level $r.$ We will focus on constructing the level $r$ irreducible representations of groups $\SL_2(\cO)$ for every $r \geq 1.$ 

 For  $i \in \mathbb N,$  let  $M_n(\cO_{i})$  be the set of 
$n \times n$ matrices with entries from $\cO_i.$
 For any $i \leq r,$ there exists a natural surjective ring homomorphism $\cO_{r} \rightarrow \cO_i.$ By applying entry wise, this gives ring homomorphism  $f_{r,i}: M_n(\cO_{r})  \rightarrow M_n(\cO_i).$ The restriction of $f_{r,i}$ to $\GL_n(\cO_r),$ denoted by $\rho_{r,i},$ defines a surjective group homomorphism from $\GL_n(\cO_r)$ onto $\GL_n(\cO_i)$ and $\rho'_{r,i}: \SL_n(\cO_{r})  \rightarrow \SL_n(\cO_i)$ is the corresponding homomorphism obtained by restricting $f_{r,i}$ to $\SL_n(\cO_{r}) .$ For any $A \in M_n(\cO_{r}),$ the image $f_{r,1}(A)$ is denoted by $\bar{A}.$ Let $M^i =  \mathrm{ker} (\rho_{r,i})$ and $K^i = \mathrm{ker}(\rho'_{r,i}).$ Then it is clear that $K^i = M^i \cap \SL_n(\cO_r).$ The groups $M^i$ and $K^i$ are called the $i^{th}$ congruence subgroups of $\GL_n(\cO_r)$ and $\SL_n(\cO_r)$ respectively.
For $i \geq r/2,$ the group  $M^i $  is isomorphic to the abelian additive group  $M_n(\cO_{r-i})$ and  the subgroup $K^i$ of $M^i$  is isomorphic to the subgroup of $M_n(\cO_{r-i})$ consisting of all matrices with trace zero. 

An irreducible representation $\rho$ of $\G(\cO_r)$ is called a {\it twist} of another representation $\rho'$  if there exists a one-dimensional representation $\chi$ of $\G(\cO_r)$ such that $\rho' \cong \rho\otimes \chi.$ We say an irreducible representation $\rho$ of $\G(\cO_r)$ is {\it primitive} if neither $\rho$ nor any of its twists 
are trivial when restricted to $(r-1)^{th}$ congruence subgroup and $\rho$  is called {\it imprimitive} if it is not primitive. The set of imprimitive representations of $\G(\cO_r)$ can be constructed from the representations of  $\G(\cO_{r-1}).$ 
So to understand irreducible representations of $\G(\cO),$ it is enough to understand the primitive irreducible representations of each level. We use Clifford Theory (see Isaacs~\cite[Theorem~6.11]{MR2270898}) as our main tool for the construction of the primitive irreducible representations of $\SL_2(\cO_r)$.

 The group of one-dimensional representations of an abelian group $H$ is denoted by $\widehat{H}.$ 
Let $\pi$ be a fixed uniformizer of the ring $\cO.$ We fix an additive  one-dimensional representation $\psi: \cO_r  \rightarrow \mathbb C^\times$ such that $\psi(\pi^{r-1}) \neq 1.$ 
For $A \in M_n(\cO_i),$ let $\tilde{A} \in M_n(\cO_r)$ be an arbitrary lift of $A$ satisfying $f_{r,i}(\tilde{A}) = A.$ For any $i \leq r/2,$ define $\psi_A: I + \pi^{r-i} M_n(\cO_r) \rightarrow \mathbb C^\times$ by $\psi_A(I + \pi^{r-i} B) = \psi(\pi^{r-i}\mathrm{trace}(\tilde{A}B))$ for all $I + \pi^{r-i} B \in M^{r-i}.$  Then $\psi_A$ is a well defined one-dimensional representation of $M^{r-i}=I + \pi^{r-i} M_n(\cO_r).$
For $i \leq r/2,$ we obtain  
\begin{equation}
\label{eq:duality}
M_n(\cO_{i}) \cong \widehat {M^{r-i}}\,;\,\,\,\, A \mapsto \psi_A.
\end{equation}
We remark that for $\Char(\cO)=2,$ we shall  work with more specific $\psi,$ see section~\ref{sec:characterization-of-Ea-results} for its description.

For $r \geq 2,$ fix $\ell = \lceil r/2 \rceil$ and $\ell' = \lfloor r/2 \rfloor.$ Then (\ref{eq:duality}) gives an isomorphism of $M^\ell$ and $\widehat{M^\ell} \cong M_n(\cO_{\ell'}).$ The abelian normal subgroup $M^\ell$ of $\GL_n(\cO_r)$ and $\widehat{M^\ell}$ are known to play a very important role in explicitly constructing the irreducible representations of $\GL_n(\cO_r)$ via Clifford theory. For $\SL_n(\cO_r),$ the corresponding role is played by groups $K^\ell$ and  $\widehat{K^\ell}.$ We first describe $\widehat{K^\ell}$ below. 

Define an equivalence relation on $M_n(\cO_{\ell'})$ by $A \sim B$ if and only if $A = B +xI$ for some $x \in \cO_{\ell'}.$ The set of equivalence classes is denoted by $M_n(\cO_{\ell'})/\sim$ and the equivalence class of $A \in M_n(\cO_{\ell'})$ is denoted by $[A].$ Then  $\psi_A|_{K^\ell} = \psi_B|_{K^\ell}$ if and only if $A \sim B.$ Therefore $[A] \mapsto \psi_A|_{K^\ell}$ gives a bijection between $M_2(\cO_{\ell'})/\sim$ and $\widehat{K^\ell}.$ In view of this, we denote $\psi_A|_{K^\ell}$ by $\psi_{[A]}.$

Recall that a matrix $A \in M_n(\cO_r)$ is called {\it cyclic} if there exists a vector $v \in \cO_r^{\oplus n}$  such that $\{v, Av, A^{2}v, \ldots, A^{n-1} v\}$ generate $\cO_r^{\oplus n}$ as a free $\cO_r$-module. 
As an application of Nakayama's  lemma we see that a matrix $A \in M_n(\cO_\ell)$ is cyclic if and only if $\bar{A}$ is. Motivated by this, we say one-dimensional representations $\psi_A$ and $\psi_{[A]}$ of groups $M^\ell$ and $K^\ell$ respectively are cyclic if and only if $A$ is cyclic. Note that any $2\times 2$ matrix $A$ 
 is either cyclic or its projection $\bar{A}$ is scalar.
 We observe that an irreducible representation $\rho$ of $\GL_2(\cO_r)$ is primitive if and only if $\langle \rho|_{M^\ell}, \psi_{A} \rangle \neq 0 $ implies $A$ is a cyclic matrix. A similar criterion also holds for irreducible representations of  $\SL_2(\cO_r)$ as well.

  For a finite  group $G,$ let $\mathrm{Irr}(G)$ be the set of  inequivalent irreducible representations of $G.$ For $H\leq G$ and $\phi \in \mathrm{Irr}(H),$  let $\mathrm{Irr}(G \mid \phi)=\{\rho \in \mathrm{Irr}(G) \mid  \langle \rho|_{H}, \phi \rangle \neq 0  \}.$
For $H \unlhd G$ and $\phi \in \mathrm{Irr}(H),$  let $C_G(\phi) = \{ g \in G \mid \phi^g \cong \phi \}$ be the inertia group (also called stabilizer) of $\phi.$ For a group  $G$ and $g \in G,$ let $C_G(g) = \{h \in G \mid gh = hg   \}$ denote the centralizer of $g$ in $G.$ 
Recall that all representations that we deal with are always finite dimensional.

By Clifford theory,  the following bijections (via induction) hold for every $\psi_A \in \widehat{M^\ell}$ and $\psi_{[A]} \in \widehat{K^\ell}.$ 
\begin{equation}
\label{eq:gl-clifford}
\mathrm{Irr}( C_{\GL_n(\cO_{r})}(\psi_{A}) \mid \psi_A)  \leftrightarrow \mathrm{Irr}( \GL_n(\cO_r) \mid \psi_A),
\end{equation}
\begin{equation}
\label{eq:sl-clifford}
 \mathrm{Irr}(C_{\SL_n(\cO_{r})}(\psi_{[A]}) \mid \psi_{[A]}) \leftrightarrow \mathrm{Irr}(\SL_n(\cO_{r}) \mid \psi_{[A]}).  
\end{equation} 

Now onwards, we will concentrate on understanding the sets on left side of the above bijections. 
These are well understood
 for $\GL_2(\cO_{r})$
 for general $p$ and $\SL_2(\cO_r)$ for $p \neq 2$. However, we do not have a complete understanding of the left side sets of (\ref{eq:sl-clifford}) for $\SL_2(\cO_r)$ with  $p=2.$ 
As a first step towards understanding $\SL_2(\cO_r)$ representations, we consider the $\SL_2(\cO_r)$ action on $\widehat{K^\ell}$. Let $\Sigma^{\SS}$( $\Sigma^{\IR}$, $\Sigma^{\NS}$) be $\{ \psi_{[A]} \} \subseteq \widehat{K^\ell}$ such that   $\bar{A}$ is split semisimple (irreducible, Split non semisimple). The group $\SL_2(\cO_{r})$ acts on $\Sigma^{\SS},$ $\Sigma^{\IR}$, and $\Sigma^{\NS}$ via $\psi^g_{[A]} = \psi_{[A^g]}$. The orbits of this action are denoted by  $\Sigma^{\SS}_\cO$,  $\Sigma^{\IR}_\cO$, $\Sigma^{\NS}_{\cO}$ respectively. 
We study the stabilizer and orbits of this $\SL_2(\cO_r)$ action in Sections~\ref{sec:Stabilizer-of-Sl2-action} and \ref{sec:orbits-stabilizers}.  In particular, we prove the following results regarding the number of orbits. These results play an important role in proving Theorems~\ref{thm:abs-of-convergence} and \ref{thm:group-algebras}. We write $ A(q) \asymp B(q) $ to indicate that $A(q) = O(B(q))$ and $B(q) = O(A(q))$; that is, they have the same order of
magnitude.  Recall that $\dvrp$, for a prime $p$, denotes the set of all compact discrete valuation rings with finite residue field of characteristic $p.$ Now onwards,  we will also assume that $\cO/\wp \cong \mathbb F_q$ for $\cO \in \dvr_2$.

\begin{thm}\label{thm:number-of-orbits-split semisimple} 

Let $r \geq 2.$ 
\begin{enumerate}
		\item For $\cO \in  \dvrtwoplus,$  $|\Sigma^{\SS}_\cO| = |\Sigma^{\IR}_\cO| =  (q-1)(q^{\ell'-1}) .$ 
		\item For $\cO \in  \dvrtwozero,$  $|\Sigma^{\SS}_\cO| = |\Sigma^{\IR}_\cO| =  \begin{cases}  \frac{(q-1)(q^{\ell'-1})}{2}  & \mathrm{if}\,\,  \ell' > \ee, \\ (q-1)(q^{\ell'-1})& \mathrm{if}\,\,  \ell' \leq \ee. \end{cases}$ 
		\item For $\cO \in  \dvrtwozero,$ the set $\Sigma_\cO^\NS$ satisfies $|\Sigma_\cO^\NS| \asymp q^{\frac{r}{2}}.$ 
	\end{enumerate} 
\end{thm}
The results regarding  $\Sigma_\cO^\NS$ for $\cO \in  \dvrtwoplus$ were obtained by  H\"{a}s\"{a}-Stasinski~\cite{2017arXiv171009112H}, see Proposition~\ref{prop:SNS-orbits-Char-2}. We also refer the reader to Appendix~\ref{appendix} for a few comments regarding our notations vs those of H\"{a}s\"{a}-Stasinski~\cite{2017arXiv171009112H}.
Our next lemma gives the information regarding stabilizers of $\SL_2(\cO_r)$ action on $\widehat{K^\ell}$. It illustrates an important difference between $p =2$ and $p \neq 2$ cases. Fix a lift $\tilde{A} =  \mat 0 {\tilde{a}^{-1} \tilde{\alpha}}{\tilde a}{\tilde{\beta}}  \in M_2(\cO_{r})$ of cyclic $A = \mat 0 {a^{-1}\alpha } a\beta \in M_2(\cO_{\ell'})$. For $i \in \{ \ell, \ell' \}$, define 
\[ \mathrm{h}_{\tilde{A}}^{i} = \{ x \in \cO_{r } \mid 2x = 0  \,\, \mathrm{mod}\, (\pi^{i} ), \, \,  x(x+\tilde \beta) = 0 \,\, \mathrm{mod}\, (\pi^{i} ) \}.
\]
Let  $\mathrm{H}_{\tilde{A}}^{i} = \{ e_x = \mat1{\tilde{a}^{-1}x}01 \mid x \in h_{\tilde{A}}^i \}  $. Then $\mathrm{H}_{\tilde{A}}^{i} $ is an abelian group for $i \in \{ \ell, \ell' \}$. We remark that for the definition of $h_{\tilde{A}}^i;$ $i=\ell' , \ell$ by a lift of $A=\mat 0 {a^{-1}\alpha } a\beta$ we always consider a matrix of the form $\mat 0 {\tilde{a}^{-1} \tilde{\alpha}}{\tilde a}{\tilde{\beta}}$. It suffices for our purposes to consider lifts of this kind, see Remark~\ref{remark:lift-dependence}. 
See Section~\ref{sec:Stabilizer-of-Sl2-action}
for a few results regarding  $h_{\tilde{A}}^{\ell'}$ and $h_{\tilde{A}}^{\ell}$. 

\begin{lemma}
	\label{lem:stabilizer-form}
	Let $\cO \in \dvrp$ and  $A = \mat 0 {a^{-1}\alpha } a\beta \in M_2(\cO_{\ell'})$ be cyclic. Then  
	\begin{enumerate}	
		\item  $ C_{\GL_2(\cO_{r})} (\psi_A)  = C_{\GL_2(\cO_{r})} (\tilde{A}) M^{\ell'}.$ 
		\item For $p \neq 2,$  $ C_{\SL_2(\cO_{r})} (\psi_{[A]})  = C_{\SL_2(\cO_{r})}(\psi_A).$ 
		\item For $p=2,$ $C_{\SL_2(\cO_{r})} (\psi_{[A]})  = C_{\SL_2(\cO_{r})} (\psi_{A}) \mathrm{H}_{\tilde{A}}^{\ell'}.$ 	
	\end{enumerate}	
\end{lemma}

See Section~\ref{sec:Stabilizer-of-Sl2-action} for its proof. 
The group $C_{\GL_2(\cO_{r})} (\tilde{A})$ is abelian for cyclic $A$. Hence the one-dimensional representations $\psi_A$ and $\psi_{[A]}$ extend to the groups $C_{\GL_2(\cO_{r})} (\tilde{A}) M^{\ell}$ and $C_{\GL_2(\cO_{r})} (\tilde{A}) M^{\ell} \cap \SL_2(\cO_r)$ respectively (see Lemmas~\ref{lem:centralizer-form} and \ref{lem:centralizer-sl}). Now onwards, we use the following notations for simplification:

\begin{itemize} 
	\item $C_G^\ell(\tilde{A}) :=  C_{\GL_2(\cO_{r})} (\tilde{A}) M^{\ell}$ and $C_G^{\ell'}(\tilde{A}) := C_{\GL_2(\cO_{r})} (\tilde{A}) M^{\ell'}.$
	\item  $C_S^\ell(\tilde A):= C_G^\ell(\tilde{A}) \cap \SL_2(\cO_{r})$ and $C_S^{\ell'}(\tilde A) := C_G^{\ell'}(\tilde{A}) \cap \SL_2(\cO_{r}).$
\end{itemize}

Let $\mathcal{E}_{\tilde{A}}$ be the set of all extensions of $\psi_{[A]}$ to  $C_S^\ell(\tilde{A}).$ For any $\chi \in \mathcal{E}_{\tilde{A}},$ let
$
H_{\tilde{A}}(\chi) = \{ g \in  C_S^\ell(\tilde{A}) \mathrm{H}_{\tilde{A}}^{\ell} \mid \chi^g = \chi  \}. 
$
The group $\mathrm{H}_{\tilde{A}}^\ell$
is abelian, therefore each $\chi \in \mathcal{E}_{\tilde{A}}$ can be extended to a character of $H_{\tilde{A}}(\chi).$
Consider the set
\[
\mathbb E_{\tilde{A}} = \{e_\lambda  = \mat 1{\tilde{a}^{-1} \lambda} 01 \in \mathrm{H}_{\tilde{A}}^\ell \mid \psi_{[A]}\,\, \mathrm{extends}\,\, \mathrm{to}\,\, C_S^{\ell}(\tilde{A}) \langle e_\lambda\rangle\},
\]
where $\langle e_\lambda\rangle$ denotes the group generated by $ e_\lambda.$ 
It is clear that $H_{\tilde{A}}(\chi) \cap  \mathrm{H}_{\tilde{A}}^\ell  \subseteq \mathbb E_{\tilde{A}}.$ 
Consider the hierarchy of groups given in Figure~\ref{hierarchy-of-groups}, where $A \rightarrow B$ means $A \subseteq B.$

\begin{tiny}
\begin{figure}[h]
\[
\xymatrix{
		& & \SL_2(\cO_r) \\ 
		& & C_{\SL_2(\cO_{r})}(\psi_{[A]})  \ar[u]  \\
		C_S^\ell(\tilde{A}) \mathrm{H}_{\tilde{A}}^{\ell} \ar[urr]  &  &   \\
			C_S^\ell(\tilde{A})\mathbb E_{\tilde{A}} \ar[u]  &  &   \\
		H_{\tilde{A}}(\chi)\ar[u]& & C_{\SL_2(\cO_{r})}(\psi_A) \ar[uuu]\\
		& C_S^\ell(\tilde{A})  \ar[ul] \ar[ur] & \\
		& K^\ell \ar[u] & \\
	} 
	\]
	\caption{}\label{hierarchy-of-groups}
\end{figure}
\end{tiny}  

The set $\mathbb E_{\tilde{A}}$ plays a very important role throughout this article. Our first goal is to  characterize the elements of $\mathbb E_{\tilde{A}}.$ 
We remark that this characterization for  zero characteristic $\cO$ case is easier than the positive characteristic case. We include the statements of all results regarding the characterization of $\mathbb E_{\tilde{A}}$ in Section~\ref{sec:characterization-of-Ea-results}. Proofs of the main results of this section are complicated and tedious, so we postpone those to Sections~\ref{sec:proof of thm:condition for extension},  \ref{sec:proof of thm:simplification of extension conditions} and  \ref{sec:proof of thm:quotient_abelian}.
 We use the characterization of $\mathbb E_{\tilde{A}}$ to prove asymptotic results regarding the numbers and dimensions of irreducible representations of $\SL_2(\cO_{r}).$
 We require the following notation to state our results explicitly.
\[
\mathbf R_\cO = \begin{cases}  4 \ee&  \mathrm{if}\,\, \cO \in \dvrtwozero \mathrm{\,\,and\,\, ramification \,\, index \,\,of \,\, }\cO = \ee,   \\   2 &  \mathrm{if}\,\,  \cO \in \dvrtwoplus.  \end{cases}
\]

 \begin{thm}
 	\label{thm: Na-and-dimension-asymptomatics}
 	Let $\cO \in \dvrtwo$ and $r \geq \mathbf R_\cO.$ Let  $A \in M_2(\cO_{\ell'})$ be cyclic and $n_A =  | \mathrm{Irr}(\SL_2(\cO_r) \mid \psi_{[A]}) |$. 
 	\begin{enumerate} 
 		\item $ n_A \asymp \frac{	|C_{\SL_2(\cO_{\ell'})}(A)| \times q^{\ell}}
 		{|\mathrm{h}_{\tilde{A}}^{\ell'}|}.$
 		\item 	Every $\rho \in  \mathrm{Irr}(\SL_2(\cO_r) \mid \psi_{[A]}) $ satisfies $$\dim(\rho) \asymp  \frac{|\SL_2(\cO_r)|}{ 	|C_{\SL_2(\cO_{\ell'})}(A)| \times q^{3\ell}}.$$
 	\end{enumerate} 
 \end{thm} 
 
 A proof of this result is included in Section~\ref{sec:proof-of-main-thms}.
 This result and a few results of Sections~\ref{sec:Stabilizer-of-Sl2-action} and \ref{sec:orbits-stabilizers} play major role in proving Theorem~\ref{thm:abs-of-convergence} (see Section~\ref{sec:abs-of-conv}). 
 We also use the characterization results of $\mathbb E_{\tilde{A}}$  to give a construction of all primitive irreducible representations of $\SL_2(\cO_{2 \ell})$. 

\begin{thm}
	\label{main_theorem-2}
	Let $\cO \in \dvrtwo$  and $2\ell \geq \mathbf R_\cO.$ The following holds for every cyclic $A \in M_2(\cO_{\ell})$ and the group $\mathbb M_A = C_{\SL_2(\cO_{2\ell})}(\psi_{A}) \mathbb E_{\tilde{A}}$. 
	\begin{enumerate}
		
		\item The one-dimensional representation $\psi_{[A]}$ of $K^\ell$  extends to the group $\mathbb {M}_A.$ 
		\item The group $\mathbb M_A/K^\ell$ is either abelian or is a semidirect product of an abelian group with a group of order two. Hence every representation $\rho \in  \mathrm{Irr}(\mathbb M_A\mid  \psi_{[A]}) $ is of dimension either  one or two. 
		\item By Clifford theory, the set of equivalence classes of  $\mathrm{Irr}(\mathbb {M}_A\mid  \psi_{[A]})$ under the conjugation action of $C_{\SL_2(\cO_{2\ell})}(\psi_{[A]})$ is in bijective correspondence with $\mathrm{Irr}( \SL_2(\cO_{2\ell})\mid  \psi_{[A]})$ given by $\rho \mapsto \mathrm{Ind}_{\mathbb {M}_A}^{\SL_2(\cO_{2\ell})}(\rho)$.

	\end{enumerate}
\end{thm} 
See Section~\ref{sec:construction of odd level rep} for its proof. Some qualitative corollaries of Theorem~\ref{main_theorem-2} are discussed at the start of Section~\ref{sec:Examples}. Let $\calp_G(X)$ be the representation zeta polynomial of the finite group $G$ (see Definition~\ref{def:rep zeta poly}).
We use Theorem~\ref{main_theorem-2} to prove the following result in Section~\ref{sec:group-algebras}.
\begin{thm}\label{thm:zeta poly not eaqal- gp algebra} Let $\cO \in \dvrtwozero$ with ramification index $\ee $ and $\cO' \in \dvrtwoplus$ such that $\cO/\wp \cong \cO'/\wp'.$  Then  
	\[
	\calp_{\SL_2(\cO_{2 \ell})}(X)\neq \calp_{\SL_2(\cO'_{2 \ell}) }(X)
	\]
	for any $\ell > \ee.$
\end{thm}

Theorem~\ref{thm:group-algebras} directly follows from this result. For reader's convenience, below we describe the interdependencies of various sections, where $\xymatrix{Section~A \ar@{:>}[r] &Section~B}$ denotes that the results of Section $A$ are used in Section~B. 
\[
\xymatrix{
& & Section~\ref{sec:introduction} \ar@{:>}[d] & & \\
& & Section~\ref{sec:basic-framework}\ar@{:>}[d] \ar@{:>}[rrd] \ar@{:>}[lld] & & \\
Section~\ref{sec:Stabilizer-of-Sl2-action} \ar@{:>}[rrd] & & Section~\ref{sec:orbits-stabilizers} \ar@{:>}[d]  & & Section~\ref{sec:characterization-of-Ea-results} \ar@{:>}[lld] \ar@{:>}[d] \\ 
&  & Sections~\ref{sec:rep-growth},\ref{sec:odd-level-reps},\ref{sec:Examples} & & Sections~\ref{sec:proof of thm:condition for extension}, \ref{sec:proof of thm:simplification of extension conditions}, 
\ref{sec:proof of thm:quotient_abelian}
}
\]

{\bf Now onwards, except  for Lemma~\ref{lem:centralizer-form} and the proof of  Lemma~\ref{lem:stabilizer-form}, we always assume that $\cO \in \dvrtwo$ and  $\cO/\wp \cong \mathbb F_q$. For $\cO \in \dvrtwozero$, we also fix $\ee$ to be the ramification index of $\cO$, that is, $2 \cO = \pi^\ee$. } Note that the ramification index $\ee$ depends on $\cO$. However, we deal with only one $\cO \in \dvrtwozero$ at a given time, so there will not be any confusion regarding this notation. 

 \section{Stabilizers of $\widehat{K^\ell}$ under $\SL_2(\cO_{ r})$ action}
\label{sec:Stabilizer-of-Sl2-action}

This section contains results regarding stabilizers of $\SL_2(\cO_r)$ action on $\widehat{K^\ell}$. The following lemma is well known, see Hill~\cite{MR1334228} for a proof. 
\begin{lemma}
	\label{lem:centralizer-form} 
	Let $\cO \in \dvrp$  for a prime $p$. Let $m, r \in \mathbb N$ such that $r \geq m$. The following are true for any cyclic matrix $A \in M_2(\cO_m)$. 
	\begin{enumerate}
		\item Any lift $\tilde{A} \in M_2(\cO_r)$ of $A$ is cyclic. 
		\item The centralizer of $A$ in $\GL_2(\cO_m),$ denoted $C_{\GL_2(\cO_m)}(A) ,$ consists of invertible matrices of the form $x I + yA$ for $x, y \in \cO_m.$
		\item $\rho_{r, m}(C_{\GL_2(\cO_r)}(\tilde{A})) =  C_{\GL_2(\cO_m)}(A)$ for every lift $\tilde{A}$ of $A$. 
		\item For every cyclic $A \in M_2(\cO_{\ell'}),$ the one-dimensional representation $\psi_A \in \widehat{M^\ell}$ extends to $C_G^\ell(\tilde{A})=C_{\GL_2(\cO_{r})} (\tilde{A}) M^{\ell}$ for any lift $\tilde{A} \in M_2(\cO_r)$ of $A.$ 
	\end{enumerate}
\end{lemma} 

This lemma immediately implies the next result regarding $C_S^\ell(\tilde{A})$. 
\begin{lemma}
	\label{lem:centralizer-sl}
		Let $\cO \in \dvrtwo$. The following are true for any cyclic matrix $A \in M_2(\cO_{\ell'})$. 
	\begin{enumerate}
		\item The one-dimensional representation $\psi_{[A]}$ extends to $C_S^\ell(\tilde{A}) = (C_{\GL_2(\cO_{r})} (\tilde{A}) M^{\ell}) \cap \SL_2(\cO_r)$ for any lift $\tilde{A} \in M_2(\cO_r)$ of $A.$ 
		\item The quotient group $C_S^\ell(\tilde{A}) /K^\ell$ is abelian. 
		
	\end{enumerate}
\end{lemma}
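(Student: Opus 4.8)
The plan is to deduce both statements directly from Lemma~\ref{lem:centralizer-form}, using in addition only the elementary observation that the $\GL_2$-centralizer of a cyclic matrix is commutative.

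For part (1), I would first invoke Lemma~\ref{lem:centralizer-form}(3), which provides an extension $\tilde{\psi}_A$ of the one dimensional representation $\psi_A$ of $M^\ell$ to the group $C_G^\ell(\tilde{A}) = C_{\GL_2(\cO_r)}(\tilde A) M^\ell$. Since $M^\ell \subseteq C_G^\ell(\tilde A)$, we have $K^\ell = M^\ell \cap \SL_2(\cO_r) \subseteq C_G^\ell(\tilde A) \cap \SL_2(\cO_r) = C_S^\ell(\tilde A)$. Restricting $\tilde\psi_A$ to the subgroup $C_S^\ell(\tilde A)$ then yields a one dimensional representation whose restriction to $K^\ell$ equals $\tilde\psi_A|_{K^\ell} = \psi_A|_{K^\ell} = \psi_{[A]}$, by the very definition of $\psi_{[A]}$. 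Thus $\tilde\psi_A|_{C_S^\ell(\tilde A)}$ is the sought extension, and part (1) follows with essentially no further work.

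For part (2), the key input is that $C_{\GL_2(\cO_r)}(\tilde A)$ is abelian: by Lemma~\ref{lem:centralizer-form}(2) every element has the form $xI + y\tilde A$ with $x,y \in \cO_r$, and any two such matrices, being polynomials in $\tilde A$, commute. Since $M^\ell$ is a congruence subgroup it is normal in $\GL_2(\cO_r)$, so by the second isomorphism theorem
\[
C_G^\ell(\tilde A)/M^\ell = C_{\GL_2(\cO_r)}(\tilde A) M^\ell / M^\ell \cong C_{\GL_2(\cO_r)}(\tilde A)/\big(C_{\GL_2(\cO_r)}(\tilde A) \cap M^\ell\big),
\]
which is abelian. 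Hence the commutator subgroup of $C_G^\ell(\tilde A)$ is contained in $M^\ell$.

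Finally I would restrict attention to $C_S^\ell(\tilde A) = C_G^\ell(\tilde A) \cap \SL_2(\cO_r)$. For $g, h \in C_S^\ell(\tilde A)$ the commutator $[g,h]$ lies in $M^\ell$ by the previous paragraph, and it automatically has determinant one, so $[g,h] \in M^\ell \cap \SL_2(\cO_r) = K^\ell$. Therefore $C_S^\ell(\tilde A)/K^\ell$ is abelian, which completes part (2). I do not expect any genuine obstacle here; the only points requiring care are the inclusion $K^\ell \subseteq C_S^\ell(\tilde A)$ used in part (1) and the commutativity of $C_{\GL_2(\cO_r)}(\tilde A)$ used in part (2), both of which are immediate from Lemma~\ref{lem:centralizer-form}. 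Note in particular that the whole argument is independent of the residue characteristic, so no use of the standing assumption $p=2$ is needed.
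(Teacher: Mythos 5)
Your proposal is correct and follows essentially the same route as the paper: part (1) is obtained by restricting the extension of $\psi_A$ furnished by Lemma~\ref{lem:centralizer-form}(3), and part (2) by noting that $C_{\GL_2(\cO_r)}(\tilde A)M^\ell/M^\ell$ is abelian (via Lemma~\ref{lem:centralizer-form}(2)) so that all commutators land in $M^\ell$, and then intersecting with the determinant-one condition to place them in $K^\ell$. Your write-up merely makes explicit the second isomorphism theorem step and the inclusion $K^\ell \subseteq C_S^\ell(\tilde A)$, which the paper leaves implicit.
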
 
\begin{proof} Here (1) follows from Lemma~\ref{lem:centralizer-form}(3) and the fact that $\psi_{[A]}$ is a restriction of $\psi_A.$ For (2), we claim that $[C_{\GL_2(\cO_{r})} (\tilde{A}) M^{\ell}, C_{\GL_2(\cO_{r})} (\tilde{A}) M^{\ell}] \subseteq K^\ell.$ This 0follows because the quotient $\frac{C_{\GL_2(\cO_{r})} (\tilde{A}) M^{\ell}}{M^\ell}$ is abelian by Lemma~\ref{lem:centralizer-form}(2) and every element of the commutator $[C_{\GL_2(\cO_{r})} (\tilde{A}) M^{\ell}, C_{\GL_2(\cO_{r})} (\tilde{A}) M^{\ell}]$ has determinant one.  	
\end{proof}

The above result is used very frequently in this article. Sometimes we use it even without explicitly mentioning it. When $\bar{A}$ is cyclic, the following lemma is easy and well known in the literature.
\begin{lemma}
	\label{gl-centralizer-cardinality}
	Let $A \in M_2(\cO_m)$ such that $\bar{A}$ is cyclic. Then  
	\[
	|C_{\GL_2(\cO_{m})}(A)| = \begin{cases} (q^2-1)q^{2 m -2} &
	\,\, \mathrm{for} \,\, \bar{A} \,\, \mathrm{irreducible}, \\ (q-1)^2 q^{2m -2}&
	\,\, \mathrm{for} \,\, \bar{A} \,\, \mathrm{split\,\, semisimple}, \\ (q^2-q)q^{2 m -2} & \,\, \mathrm{for} \,\, 
	\bar{A} \,\, \mathrm{ split, non-semisimple} . \end{cases}
	\]
\end{lemma}
\begin{lemma} 
	\label{lem:image-det-map} For 
	$A = \mat 0 {a^{-1}\alpha} a \beta \in M_2(\cO_m)$ such that $\beta$ is invertible, the group $C_{\GL_2(\cO_{m})}(A)$ maps onto $\cO_{m}^\times$ under the determinant map. 
	
\end{lemma} 
\begin{proof} 
	The image of determinant map from $C_{\GL_2(\cO_{m})}(A)$  consists of $\{ x^2 + \beta xy -\alpha y^2 \mid x,y \in \cO_{m}\} \cap \cO_{m}^\times.$ To prove the result, it is enough to prove $ \cO_{m}^\times \subseteq \{ x^2 +\beta xy - \alpha y^2 \mid x,y \in \cO_{m} \} $. 
	Let  $w \in \cO_{m}^\times.$  The map $v\mapsto v^2$ is a bijection on $\mathbb F_q^\times$. Therefore there exists $w' \in \cO_{m}^\times$ such that $w(w')^{-2} = 1 + z$ for some $z \in \pi \cO_{m}.$  
	For $z \in \pi \cO_{m},$  we note that $y=0$ is a solution   for $\beta y - \alpha y^2 = z$  in the residue field. 
	Therefore by Hensel's lemma, $\beta y - \alpha y^2 = z$  has a solution in  $\cO_{ m}.$ 
	Let $y_z \in  \cO_{m} $ such that $\beta y_z -\alpha  y_z^2 = z.$ Then 
	$$ w = (w')^2 (1+z) = (w')^2 +\beta  w' (w'y_z) -\alpha (w' y_z) ^2 \in \{ x^2 +\beta xy - \alpha y^2 \mid x,y \in \cO_{m} \}. $$ This completes the proof. 
\end{proof} 

\begin{lemma}
	\label{lem:SNS-char-o-proof}
	For $\cO \in \dvrtwozero$ and $j\geq 1,$ $|\{x^2 \mid x\in \cO_j^\times\}|\asymp |\cO_j|=q^j.$
\end{lemma} 
\begin{proof} For the proof, we show that $1+\pi^{2\ee+1} \cO_j \subseteq \{x^2 \mid x\in \cO_j^\times\}.$ Since $2 \in \pi^{\ee} \cO_j^\times,$ there exists $w\in \cO_j^\times$ such that $2=\pi^{\ee} w.$ By Hensel's lemma, for every $x \in \cO_j$ there exists $y\in \cO_j$ such that $y+w^{-1}y^2=\pi^e w^{-1}x.$ The result  now follows from 
$$1+\pi^{2\ee+1} x=1+2\pi^\ee (y+w^{-1}y^2)=1+2\pi^\ee y +(\pi^\ee y)^2=(1+\pi^\ee y)^2$$ and  $|1+\pi^{2\ee+1} \cO_j| \asymp q^j$ for all $j \geq 1$. 
\end{proof} 

Let $\cO \in \dvrtwoplus$ and $A = \mat 0 {a^{-1} \alpha} a \beta \in M_2(\cO_{\ell'}).$
We define $\bol s$ and $\bol k$ of $A$ as follows (see also Section~\ref{sec:characterization-of-Ea-results}).
\begin{eqnarray}
\bol  k & = &  \val(\beta). \nonumber \\
\bol s & = & \begin{cases}
2 \lfloor \bol k/2 \rfloor + 1 &  \mathrm{if}\,\,   \alpha = v^2 \,\,\mathrm{mod}\,(\pi^ \bol k ),  \\ m&  \mathrm{if}\,\, \alpha = v_1^2 + \pi^m v_2^2 \,\,\mathrm{mod}\,(\pi^ \bol k ) \,\,\mathrm{for}\,\,\mathrm{odd}\,\, m < \bol k\,\,\mathrm{and}\,\, v_2 \in \cO_{\ell'}^\times. 
\end{cases}\nonumber
\end{eqnarray}
We note that the parameters $\bol k$ and $\bol s$  depend only on the trace and determinant of $A.$ So these parameters are conjugation invariant.

\begin{thm}\label{thm:C-S-L asymp to..}
	Let $j \geq 1$ and $A = \mat{0 }{a^{-1}\alpha}{a}{\beta} \in M_2(\cO_{j})$ be cyclic. Then 
	\begin{enumerate} 
		\item For $A \in \mathrm{Type}(\SS)$, 
		$|C_{\SL_2(\cO_j)}(A)|= (q-1)q^{j-1}$. 
		\item  For $A \in \mathrm{Type}(\IR)$, 
		$|C_{\SL_2(\cO_j)}(A)|= (q+1)q^{j-1}$		
		\item	For $A \in \mathrm{Type}(\NS),$  
		\begin{equation}
		|C_{\SL_2(\cO_j)}(A)|\asymp \begin{cases}q^{j}&  \mathrm{for}\,\, \cO \in \dvrtwozero ,\\
		q^{j +\lfloor \frac{\bol s}{2}  \rfloor} & \mathrm{for}\,\, \cO \in \dvrtwoplus.
		\end{cases}
		\end{equation}
	\end{enumerate} 
\end{thm}
\begin{proof}
	For $\mathrm{Type}(\SS)$ and $\mathrm{Type}(\IR),$ the result follows from Lemma~\ref{gl-centralizer-cardinality} and Lemma~\ref{lem:image-det-map}.
	For $\mathrm{Type}(\NS)$ with $\cO \in \dvrtwozero,$  Lemmas~\ref{gl-centralizer-cardinality} and \ref{lem:SNS-char-o-proof} together with the fact that  $\{x^2 \mid x\in \cO_{\ell'}^\times \}\subseteq \det(C_{\GL_2(\cO_{\ell'})}(A)$ will give the result.
	The results for $\mathrm{Type}(\NS)$ with $\cO\in \dvrtwoplus$ follows by \cite[Lemma 4.21]{2017arXiv171009112H}(see also \ref{appendix}).
\end{proof}

We proceed now to prove Lemma~\ref{lem:stabilizer-form}. 

\begin{proof}[{\bf Proof of  Lemma~\ref{lem:stabilizer-form}} ]  By the definition of     $C_{\GL_2(\cO_{r})} (\psi_A) $ and Lemma~\ref{lem:centralizer-form}(3), we obtain $C_{\GL_2(\cO_{r})} (\psi_A)  = C_{\GL_2(\cO_{r})} (\tilde{A}) M^{\ell'}.$ For $p\neq 2$ we note that $A$ and $A+xI$ are not conjugate for any $x \neq 0.$ Therefore $C_{\SL_2(\cO_{r})} (\psi_{[A]})  = C_{\SL_2(\cO_{r})} (\psi_A) .$ Now onwards, we assume $p=2.$ First of all note that for $\lambda \in \mathrm{h}_{\tilde{A}}^{\ell'},$
	$$\mat{1}{\tilde{a}^{-1} \lambda}{0}{1} \tilde{A}\mat{1}{\tilde{a}^{-1} \lambda}{0}{1}^{-1} =\tilde{A}+\lambda \mathrm{I}\,\, \mathrm{mod}\,(\pi^{\ell'}).$$
	Hence 
	$   C_{\SL_2(\cO_{r})} (\psi_{A}) \mathrm{H}_{\tilde{A}}^{\ell'} \subseteq C_{\SL_2(\cO_{r})} (\psi_{[A]}).$ Conversely, for $g \in C_{\SL_2(\cO_{r})} (\psi_{[A]}),$ there exists a $\lambda \in \cO_{ r}$ such that $g\tilde{A}g^{-1}=\tilde{A}+\lambda \mathrm{I}\,\, \mathrm{mod}\,(\pi^{\ell'}).$
	By equating trace and determinant, we obtain $\lambda \in \mathrm{h}_{\tilde{A}}^{\ell'}.$ Therefore
	$$g\mat{1}{\tilde{a}^{-1} \lambda}{0}{1}^{-1} \tilde{A} \mat{1}{\tilde{a}^{-1} \lambda}{0}{1}g^{-1} =g(\tilde{A} -\lambda \mathrm{I})g^{-1}=\tilde{A} \,\, \mathrm{mod}\,(\pi^{\ell'}).$$
	Thus $g\mat{1}{\tilde{a}^{-1} \lambda}{0}{1}^{-1}\in    C_{\SL_2(\cO_{r})} (\psi_{A}).$ Hence $C_{\SL_2(\cO_{r})} (\psi_{[A]}) \subseteq   C_{\SL_2(\cO_{r})} (\psi_{A}) \mathrm{H}_{\tilde{A}}^{\ell'} .$
\end{proof}

The following remark relating the lift $\tilde{A}$ of any cyclic $A \in M_2(\cO_{\ell'})$ and the group $C_{\SL_2(\cO_{r})} (\psi_{[A]}) $ is important. 
\begin{remark}
	\label{remark:lift-dependence}  The group $H_{\tilde{A}}^{\ell'}$ depends only  on $A$ and not on $\tilde{A}$ for any cyclic $A=\mat{0 }{a^{-1}\alpha}{a}{\beta} \in M_2(\cO_{\ell'})$  and its lift $\tilde{A} \in M_2(\cO_r)$.  This, along with Lemmas~\ref{lem:stabilizer-form} and ~\ref{lem:centralizer-form}, implies that the group $C_{\SL_2(\cO_{r})} (\psi_{[A]})$  does not depend on the choice of lift $\tilde{A}$ of $A.$ 
\end{remark} 
In Sections~\ref{sec:characterization-of-Ea-results} and \ref{sec:proof-of-main-thms}, we will make a choice of $\tilde{A}$ according to our convenience.
By above remark this does not effect the final construction of irreducible representations of $\SL_2(\cO_r).$ 
We need the following result about $\mathrm{h}_{\tilde{A}}^{\ell}$ and $\mathrm{h}_{\tilde{A}}^{\ell'}.$ A proof of this result is available in 
\cite[Lemma~5.5]{2017arXiv171009112H}. We include it here for reader's convenience.

\begin{proposition}
	\label{prop:charactrization of h_Atilda^ell and h_Atilda^ell'}
	Let $j=\ell,\ell'.$ Recall that  $\mathrm{h}_{\tilde{A}}^{j} = \{ x \in \cO_{r } \mid 2x = 0  \,\, \mathrm{mod}\, (\pi^{j} ), \, \,  x(x+\tilde \beta) = 0 \,\, \mathrm{mod}\, (\pi^{j} ) \}.$  
	\begin{enumerate}
		\item For $\cO \in \dvrtwoplus $ and  $ j\geq 2,$ 
		$$\mathrm{h}_{\tilde{A}}^{j}=\begin{cases}
			\{ 0, \tilde \beta\}+	\pi^{j-\val(\beta)} \cO_r &   \mathrm{if}\,\,  \val(\tilde\beta)<\lceil \frac{j}{2} \rceil ,\\
			\pi^{\lceil\frac{j}{2}\rceil} \cO_r &  \mathrm{if}\,\,\bol \val(\tilde\beta)\geq\lceil \frac{j}{2} \rceil.
		\end{cases}$$
		\item For $\cO \in \dvrtwozero$ and  $ j\geq 2\ee,$ 
		$	\mathrm{h}_{\tilde{A}}^{j}=\pi^{j-\min \{\ee, \val(\tilde\beta) \}}\cO_r.$
		This result also holds for $j> \ee $ and $\beta \in \cO_{\ell'}^\times.$
	\end{enumerate}
	
\end{proposition}
\begin{proof}
	Let $\cO \in \dvrtwoplus.$ 
	In this case  $2=0$ implies
	$$\mathrm{h}_{\tilde{A}}^{j}=\{x \in \cO_r \mid x(x+\tilde \beta)\in \pi^j \cO_r \}=\{x \in \cO_r \mid \val(x)
	+\val(x+\tilde \beta)\geq j \}.$$
	This gives 
	$$\mathrm{h}_{\tilde{A}}^{j}=\begin{cases}
		\{ 0, \tilde \beta\}+	\pi^{j-\val(\beta)} \cO_r &   \mathrm{if}\,\, \val(\tilde\beta)<\lceil \frac{j}{2} \rceil ,\\
		\pi^{\lceil\frac{j}{2}\rceil} \cO_r &   \mathrm{if}\,\, \val(\tilde\beta)\geq\lceil \frac{j}{2} \rceil.
	\end{cases}$$
	
	Let $\cO \in \dvrtwozero$ and  $ j\geq 2\ee.$ 
	Then $2\in \pi^\ee\cO_r^\times$ and
	$2x= 0 \,\, \mathrm{mod}\, (\pi^{j} )$ imply $x\in \pi^{j-\ee}\cO_r.$ Therefore 
	\begin{eqnarray*}
		\mathrm{h}_{\tilde{A}}^{j}&=& \{  x\in \pi^{j-\ee}\cO_r \mid x(x+\tilde \beta)= 0 \,\, \mathrm{mod}\, (\pi^{j} )\}\\
		&=&\begin{cases}
			\{ x\in \pi^{j-\ee}\cO_r\mid \val(x)+\val(\tilde{\beta})\geq j \}  & \mathrm{if}\,\,  \val(\tilde{\beta})<j-\ee, \\
			\{ x\in \pi^{j-\ee}\cO_r \mid \val(x)+\val(x+\tilde{\beta})\geq j \} & \mathrm{if}\,\,   \val(\tilde{\beta})\geq j-\ee.
		\end{cases}
		\\
		&=&\begin{cases}
			\pi^{j-\min \{\ee, \val(\tilde\beta) \}} \cO_r & \mathrm{if}\,\,  \val(\tilde{\beta})<j-\ee, \\
			\pi^{j-\ee} \cO_r &  \mathrm{if}\,\,   \val(\tilde{\beta})\geq j-\ee.
		\end{cases}
	\end{eqnarray*} 
	This completes the proof for $j \geq 2e$. We now assume that $\beta \in \cO_{\ell'}^\times$ and $j > e$.  Then $2x= 0 \,\, \mathrm{mod}\, (\pi^{j} )$ for  $j>\ee$  implies $x\in \pi^{j-\ee}\cO_r$. Hence 
	$\mathrm{h}_{\tilde{A}}^{j}= \{ x \in\pi^{j-\ee}\cO_r\mid x(x+\tilde \beta)= 0 \,\, \mathrm{mod}\, (\pi^{j} )\}.$ Note that for $x \in \pi^{j-\ee} \cO_r,$ we have $x+\tilde \beta\in \cO_{r}^\times.$  Therefore 
	$\mathrm{h}_{\tilde{A}}^{j}= \{ x \in\pi^{j-\ee}\cO_r\mid x= 0 \,\, \mathrm{mod}\, (\pi^{j} )\}=\pi^{j} \cO_r  = \pi^{j-\min \{\ee, \val(\tilde\beta) \}}\cO_r.$
\end{proof}
The following corollary is immediate. 
\begin{corollary}
	\label{h_A/h_a^l cardinality} 
The following holds for $r\geq \mathbf R_{\cO}$ and cyclic $A \in M_2(\cO_{\ell'})$. 
\begin{enumerate}
 \item $[\mathrm{h}_{\tilde{A}}^{\ell'}:\mathrm{h}_{\tilde{A}}^{\ell}]\leq q$. 
  \item For $\cO \in \dvrtwozero,$ $|\mathrm{h}_{\tilde{A}}^{\ell'}|\asymp q^{\frac{r}{2}}.$
 \item For $\cO \in \dvrtwoplus,$ $$|\mathrm{h}_{\tilde{A}}^{\ell'}|\asymp \begin{cases}
	q^{\frac{r}{2}+\bol k} &  \mathrm{if}\,\, \bol k< \frac{\ell'}{2},\\
	q^{\frac{3r}{4}} &   \mathrm{if}\,\, \bol k\geq \frac{\ell'}{2}.
	\end{cases} $$
\end{enumerate} 
\end{corollary}

\section{Orbits of $\widehat{K^\ell}$ under $\SL_2(\cO_{ r})$ action} 
\label{sec:orbits-stabilizers}

In this section, we consider the action of group  $\SL_2(\cO_r)$ on the set of cyclic one-dimensional representations of $K^\ell$ and describe a few results regarding the orbit representatives of this action.

\begin{lemma}
	A  matrix $A \in M_2(\cO_{m})$  is cyclic if and only if
	$gAg^{-1} =  \left[\begin{matrix} 0 & a^{-1}\alpha \\ a & \beta \end{matrix}\right]$ for some $g \in \SL_2(\cO_{m}),$  $\alpha, \beta \in \cO_m$ and  $a \in \cO_m^{\times}.$

\end{lemma}
\begin{proof} For $a \in \cO_m^\times,$ it is straightforward to show that any matrix $A$ such that $ gAg^{-1} = \left[\begin{matrix} 0 & a^{-1}\alpha \\ a & \beta \end{matrix}\right]$ for some $\alpha, \beta \in \cO_m$ is cyclic. We proceed to prove the converse. 
	Recall that a matrix $A \in M_2(\cO_m)$ is cyclic if and only if there exists $v \in \cO_m \oplus \cO_m$ such that the set $\{v, Av\}$ is a basis of the free $\cO_m$-module $\cO_m \oplus \cO_m.$  Therefore there exists $g \in \GL_2(\cO_m)$ such that $gAg^{-1} =  \left[\begin{matrix} 0 & \alpha \\ 1 & \beta \end{matrix}\right],$ where $\beta = \mathrm{trace}(A)$ and $\alpha = -\det(A).$ 
	Consider $g' = \mat {\det(g)^{-1}}{0}{0}1 g.$ Then $g' \in \SL_2(\cO_m)$ and $g' A (g')^{-1} =  \left[\begin{matrix} 0 & \det(g)^{-1}\alpha \\ \det(g) & \beta \end{matrix}\right] .$
	This completes the proof of the lemma. 
\end{proof}

We denote the image of any subgroup $H\subseteq \GL_2(\cO_r)$ under the determinant map $\det : \GL_2(\cO_r) \rightarrow \cO_r^\times$ by $\det(H).$ 
\begin{proposition}
\label{equivalence}  Let $B_1 = \left[ \begin{matrix}
0 & a_1^{-1} \alpha_1 \\ a_1 & \beta_1 
\end{matrix} \right]$ and $B_2 = \left[ \begin{matrix}
0 & a_2^{-1}\alpha_2 \\ a_2 & \beta_2 
\end{matrix} \right]$ be two elements of $M_2(\cO_{ \ell'})$ and $\psi_{[B_1]}$ and $\psi_{[B_2]}$ be the corresponding one-dimensional representations of $K^\ell.$ Then $\psi_{[B_1]}^g = \psi_{[B_2]}$ for some $g \in \SL_2(\cO_r)$  if and only if there exists $s \in \cO_{\ell'}$ such that the following holds.
\begin{enumerate}
\item $\beta_2 = \beta_1 - 2s,$
\item  $\alpha_2 = \alpha_1- s^2  + s \beta_1,$
\item  $a_2a_1^{-1} \in \det(C_{\GL_2(\cO_{ \ell'})}(B_1)).$
\end{enumerate}

 \end{proposition}
 \begin{proof} The group $K^{\ell}$ acts trivially on $\widehat{K^\ell},$ therefore $\psi_{[B_1]}^g = \psi_{[B_2]}$ for some $g \in \SL_2(\cO_r)$ if and only if $g'[B_1]g'^{-1}= [B_2]$ for some $g' \in \SL_2(\cO_{\ell'}).$ We obtain conditions $(1)$ and $(2)$ by comparing the trace and determinant of $g'B_1g'^{-1}$ and $B_2+ s I_2.$ By comparing the $(2,1)^{th}$ entry of $g' B_1 g'^{-1} $ for $g' = \left[ \begin{matrix} x & y \\ z & w \end{matrix} \right]  \in \SL_2(\cO_{\ell'})$ and that of $B_2 + sI$ for $s \in \cO_{\ell'},$ we obtain 
 \[
 a_2 = a_1(w^2 - \beta_1 w(z/a_1) - \alpha_1(z/a_1)^2).
 \]
Now $(3)$ follows by observing that $w^2 - \beta_1 w(z/a_1) - \alpha_1(z/a_1)^2 = \det(wI - (z/a_1) B_1 )$ and $wI - (z/a_1) B_1  \in C_{\GL_2(\cO_{ \ell'})}(B_1).$ Conversely suppose  $(1)$-$(3)$ hold, that is, there exists $s \in \cO_{\ell'}$ such that
$\beta_2 = \beta_1 - 2s,$
 $\alpha_2 = \alpha_1- s^2  + s \beta_1$
 and there exists $u,v \in \cO_{\ell'}$ such that $uI + vB_1 \in C_{\GL_2(\cO_{ \ell'})}(B_1)$ and $a_ 2 = a_1 \det(uI + vB_1).$ The condition $uI + vB_1 \in C_{\GL_2(\cO_{ \ell'})}(B_1)$ further implies that  either $u$ or $v$ is invertible. We consider an element $g' \in \SL_2(\cO_{\ell'}),$
 as follows.
 \[
 g' = \begin{cases} \left[  \begin{matrix}
 u^{-1}(1- va_1a_2^{-1}(su-v \alpha_1)) & a_2^{-1}(su-v\alpha_1) \\
 -a_1v  & u
 \end{matrix}  \right]& \mathrm{if} \, \, u \in \cO_{\ell'}^{\times},\\
 \left[  \begin{matrix}
a_1 a_2^{-1} (v \beta_1 + u - vs) & a_1^{-1}v^{-1}- a_2^{-1} v^{-1} u (v \beta_1 + u - vs)  \\
  -a_1v  & u
  \end{matrix}  \right] & \mathrm{otherwise}.
 \end{cases}
\]
Then $g' B_1 g'^{-1} = B_2 + sI$.  This completes the proof of the proposition. 
\end{proof}

Motivated by  Proposition~\ref{equivalence}, we consider the set
\[
\Sigma = \{ (a, \alpha, \beta) \in \cO_{\ell'}^{\times} \times \cO_{\ell'} \times \cO_{\ell'} \}.
\] 
and define an equivalence relation on it as follows. Two elements $(a_1, \alpha_1, \beta_1)$ and  $(a_2, \alpha_2, \beta_2)$ of $\Sigma$ are equivalent if and only if there exists $u, v, s \in \cO_{\ell'}$ such that the following conditions are satisfied. 
\begin{eqnarray}
\beta_2 & = & \beta_1 - 2s, \label{condition-1}\\
 \alpha_2 & = & \alpha_1 - s^2 + s \beta_1, \label{condition-2}\\
a_2a_1^{-1} & = & u^2 + \beta_1 uv - \alpha_1 v^2. \label{condition-3}
\end{eqnarray}
The set of equivalence class representatives (also called set of orbit representatives) is in bijective correspondence with the orbits of cyclic one-dimensional representations of $\widehat{K^\ell}$ under the $\SL_2(\cO_r)$ action. In view of this, to describe the orbit representatives of $\widehat{K^\ell},$ we identify $A = \left[ \begin{matrix}
0 & a^{-1} \alpha \\ a & \beta 
\end{matrix} \right]$ also with corresponding tuple $(a, \alpha, \beta).$ We will use these notations most prominently in Section~\ref{sec:Examples}. 

We now proceed to  find orbit representatives of $\psi_{[A]}$ for cyclic $A\in M_2(\cO_{\ell'}).$ For this,  we need the following lemma first. 
We do not precisely know the set of all orbits representatives of $\widehat{K^\ell}$ and determine these in a few required
cases. 

\subsection{Orbit Representatives  for $\cO \in \dvrtwozero$} 
\label{orbit-representations-zero-ch}
\subsubsection{$\beta$ is invertible with $\ell' > \ee$}\label{invertible-orbits-number-field}
 Let $T$ be a fixed set of representatives of $\cO_{\ell'}^\times / (1 + \pi^\ee \cO_{\ell'}).$ By (\ref{condition-1}), (\ref{condition-2}), (\ref{condition-3}) and Lemma~\ref{lem:image-det-map}, there exist equivalence class representatives of the form $(1, \alpha, \beta ),$ where $\beta \in T$ and  $$(1, \alpha, \beta ) \sim (1, \alpha', \beta) \,\,\mathrm{if \,\, and \,\, only \,\,if} \,\, \alpha \in \alpha'  + \pi^{\ell' -\ee} \cO_{\ell'} .$$

\subsubsection{$\beta$ is not invertible with $\ell' \geq 2\ee$}\label{non-invertible-orbits-number-field} 
 Let $W$ be a fixed set of representatives of $\pi  \cO_{\ell'}/  \pi^\ee  \cO_{\ell'}.$ Then 
by (\ref{condition-1}), (\ref{condition-2}) and (\ref{condition-3}),  there exist equivalence class representatives of the form 
 $(a, \alpha, \beta ),$ where $\beta \in W$ and  $(a, \alpha, \beta ) \sim (a', \alpha', \beta')$ if and only if 
 \begin{enumerate} 
\item  $\beta=\beta',$ 
\item $\alpha \in \alpha'  + \pi^{\ell' -\ee}\beta  \cO_{\ell'},$
\item  $a'a^{-1} \in\{x^2+\beta xy -\alpha y^2 \mid x,y \in \cO_{\ell'}\}\cap \cO_{\ell'}^\times.$
\end{enumerate} 

\subsection{Orbit Representatives  for $\cO \in \dvrtwoplus$ with $\ell' \geq 1$}\label{invertible-orbits-function-field}
We first note from (\ref{condition-1}) that $(a, \alpha, \beta) \sim (a', \alpha' ,\beta')$ implies $ \beta= \beta'$.  
 \subsubsection{$\beta \in \cO_{\ell'}^\times$} The set $\{ s^2+ s \beta \mid s \in \cO_{ \ell'} \} $ is an additive subgroup of $\cO_{\ell'}$ of index two. Let $\alpha_\beta$ and $\alpha'_\beta$ be two distinct coset representatives of $\{ s^2+ s \beta \mid s \in \cO_{ \ell'} \} $ in $\cO_\ell'.$ By Lemma~\ref{lem:image-det-map}, and  (\ref{condition-1}), (\ref{condition-2}), (\ref{condition-3}),   the set of tuples $$\{ (1, \alpha_\beta, \beta), (1,\alpha'_\beta,\beta)\}_{\beta \in \cO_{ \ell'}^\times }$$ is the complete set of distinct equivalence class representatives in $\Sigma$ satisfying the condition that $\beta$ is invertible. 

\subsubsection{$\beta \in \pi \cO_{\ell'}$} For this case, we do not know the sets  $\{ s^2+ s \beta \mid s \in \cO_{ \ell'} \} $ and the image of $C_{\GL_2(\cO_{\ell'})}(A)$ under the determinant map in general. For $\cO=\mathbb F_2 \llbracket t \rrbracket $ and $1\leq \ell' \leq 3,$  we find the corresponding orbit representatives in section~\ref{sec:Examples}.

\begin{proof}[\bf Proof of Theorem~\ref{thm:number-of-orbits-split semisimple}]
Note that for a cyclic $A,$ the $\mathrm{trace}(A)$ is invertible if and only if $\bar{A}$ is either  split semisimple or irreducible. We first consider $\cO \in \dvrtwoplus$. 
From Section~\ref{invertible-orbits-function-field}, the orbit representatives for $A \in \Sigma$ such that $\mathrm{trace}(A)$ is invertible are  given by
$$\{[(1, \alpha_\beta, \beta)], [(1,\alpha'_\beta,\beta)] \}_{\beta \in \cO_{ \ell'}^\times },$$
where $\alpha_\beta$ and $\alpha'_\beta$ are two distinct coset representatives of $\{ s^2+ s \beta \mid s \in \cO_{ \ell'} \} $ in $\cO_{\ell'}.$
Now observe that exactly half of them are with $\bar{A}$  split-semisimple. 
So  $|\Sigma^{\SS}_\cO| = |\Sigma^{\IR}_\cO| = (q-1)(q^{\ell'-1}) .$

Next, we consider $\cO \in \dvrtwozero$. Note that for $ \ell' \leq \ee,$ $2=0$  in $\cO_{\ell'}.$  Therefore $\cO_{\ell'}\cong \cO'_{\ell'}$ where  $\cO' \in \dvrtwoplus.$  So  we assume $ \ell'> \ee.$ 
From Section~\ref{invertible-orbits-number-field}, the orbit representatives for $A \in \Sigma$ such that $\mathrm{trace}(A)$ is invertible are 
$\{[(1, \alpha, \beta)] \mid \beta \in T \,\, \mathrm{ and } \,\, \alpha \in B \}$, 
where $T$ (resp. $B$) is a fixed set of representatives of $\cO_{\ell'}^\times / (1 + \pi^{\ee} \cO_\ell')$ (resp. $\cO_{\ell'} / \pi^{\ell'-\ee} \cO_\ell'$). Now observe that exactly half of them are with $\bar{A}$  split semisimple and the other half are with $\bar{A}$  irreducible.
So  $|\Sigma^{\SS}_\cO| = |\Sigma^{\IR}_\cO| = \frac{(q-1)(q^{\ell'-1})}{2}.$ 

  We now focus on $|\Sigma_\cO^\NS|$ with $\cO \in \dvrtwozero$. Let $r\geq 4\ee .$ For any $\alpha,\beta \in \cO_{\ell'}, $ observe that $\{x^2 \mid x\in \cO_{\ell'}^\times\} \subseteq \{x^2+\beta xy -\alpha y^2 \mid x,y \in \cO_{\ell,}\}\cap \cO_{\ell'}^\times \subseteq \cO_{\ell'}.$  By Lemma~\ref{lem:SNS-char-o-proof},  $ \{x^2+\beta xy -\alpha y^2 \mid x,y \in \cO_{\ell,}\}\cap \cO_{\ell'}^\times \asymp q^{\ell'}.$ 
	A cyclic matrix $A = \mat 0 {a^{-1}\alpha} a \beta \in M_2(\cO_m)$ is of Type (SNS) if and only if $\beta$ is non-invertible. Hence we obtain the following from Section~\ref{non-invertible-orbits-number-field}.
	\begin{eqnarray*}
		|\Sigma_\cO^\NS|&\asymp & \frac{|\cO_{\ell'}^\times |}{q^{\ell'}}\times \sum_{\beta \in \pi \cO_{\ell'}/\pi^\ee\cO_{\ell'}} | \cO_{\ell'}/ (\pi^{\ell' -\ee}\beta \cO_{\ell'})|\\
		&\asymp&1 \times  \sum_{i=1}^{\ee}  (q^{\ell' -\ee+i} \times q)\\
		&=&q^{\ell'-\ee+1} \times \frac{  q^{\ee+1}-q}{q-1}.
	\end{eqnarray*}
	Hence 
	$|\Sigma_\cO^\NS|\asymp q^{\ell'}\asymp q^{\frac{r}{2}}.$	
\end{proof}
Recall $\Sigma_\cO^\NS$ denotes the set of equivalent classes of $\{\psi_{[A]} \in \widehat{K^\ell} \mid A\,\, \mathrm{ is}\,\, \mathrm{ of }\,\,\mathrm{Type}(\NS) \}$ under the conjugation action of $\SL_2(\cO_r)$. For $\cO \in \dvrtwoplus,$ let $\Sigma_\cO^\NS(\bol k,\bol s)$  be the set  of orbits in $\Sigma_\cO^\NS$ with the parameters $\bol k$ and $\bol s.$

\begin{proposition}	\label{prop:SNS-orbits-Char-2}
	For $\cO \in \dvrtwoplus$, $r\geq \mathbf R_{\cO}$ and given $\bol k$ and $\bol s ,$ 
		$$|\Sigma_\cO^\NS(\bol k,\bol s)|\asymp \begin{cases}
		q^{\frac{r}{2}}&  \mathrm{if}\,\,  1\leq \bol k <\frac{\ell'}{2},\\
		q^{\frac{3r}{4}-\bol k}&  \mathrm{if}\,\,  \frac{\ell'}{2}\leq \bol k <\ell',\\
		q^{\frac{r}{4}}&  \mathrm{if}\,\, \bol k =\ell'.
		\end{cases}$$ 

\end{proposition}
\begin{proof}

From \cite[Lemma 4.20, Lemma~5.3]{2017arXiv171009112H}, we have    
$|\Sigma_\cO^\NS(\bol k,\bol s)|\asymp q^{\lfloor\frac{\bol s}{2}\rfloor} \times \mathcal{B}(\bol k,\bol s),$ where $\mathcal{B}(\bol k,\bol s)$ is defined by the following: 
$$\mathcal{B}(\bol k,\bol s)= \begin{cases}
		2(q-1)q^{-1} D(\bol s)&  \mathrm{if}\,\,  1\leq \bol k <\frac{\ell'}{2},\\
		(q-1)q^{\lfloor\frac{\ell'}{2}\rfloor-\bol k-1}D(\bol s)&  \mathrm{if}\,\,  \frac{\ell'}{2}\leq \bol k <\ell',\\
		q^{-\lfloor\frac{\ell'}{2}\rfloor}D(\bol s) &  \mathrm{if}\,\, \bol k =\ell',
		\end{cases}
		$$
where $D(\bol s) $ is $(q-1)q^{\ell'-\lfloor \frac{\bol s}{2} \rfloor-1}$ for $\bol s < \bol k$ and is $q^{\ell'-\lfloor  \frac{\bol k}{2}\rfloor}$ for $\bol s \geq \bol k$. 
  Note that   $ D(\bol s)\asymp q^{\frac{r}{2}-\lfloor \frac{\bol s}{2}\rfloor} $ for $ \bol s < \bol k.$ Since $\bol s \leq \bol k+1$ (by definition of $\bol s$),   we must have  $ D(\bol s)\asymp q^{\frac{r}{2}-\lfloor \frac{\bol s}{2}\rfloor} $ even  for $\bol s \geq \bol k.$ Therefore 
$$\mathcal{B}(\bol k,\bol s)\asymp \begin{cases}
		q^{\frac{r}{2}-\lfloor \frac{\bol s}{2}\rfloor}&  \mathrm{if}\,\,  1\leq \bol k <\frac{\ell'}{2}, \\
		q^{\frac{3r}{4}-\bol k-\lfloor \frac{\bol s}{2}\rfloor}&  \mathrm{if}\,\,  \frac{\ell'}{2}\leq \bol k <\ell',\\
		q^{\frac{r}{4}-\lfloor \frac{\bol s}{2}\rfloor}&  \mathrm{if}\,\, \bol k =\ell'.
		\end{cases}$$ 
The result now follows from the fact that $|\Sigma_\cO^\NS(\bol k,\bol s)|\asymp q^{\lfloor\frac{\bol s}{2}\rfloor} \times \mathcal{B}(\bol k,\bol s).$
\end{proof}

\section{Characterization of elements of $\mathbb E_{\tilde{A}}$: Results} 
\label{sec:characterization-of-Ea-results}

In this section, we state results regarding characterization of $\mathbb E_{\tilde{A}}.$ These results are crucially used in Sections~\ref{sec:rep-growth} and \ref{sec:odd-level-reps}.
We have included proofs of a few results, that were easy to prove, in this section itself. The rest of the proofs are in Sections~\ref{sec:proof of thm:condition for extension}, \ref{sec:proof of thm:simplification of extension conditions} and
\ref{sec:proof of thm:quotient_abelian}. Throughout this section, we fix $A = \mat 0{a^{-1} \alpha}a{\beta} \in M_2(\cO_{\ell'})$ and its lift $\tilde{A} = \mat 0{\tilde{a}^{-1}\tilde \alpha}{\tilde{a}}{\tilde \beta} \in M_2(\cO_r).$ For $\lambda \in \cO_r$, the matrix $\mat{1}{\tilde{a}^{-1} \lambda}{0}{1}\in M_2(\cO_r)$ is denoted by by $e_\lambda.$

\begin{lemma} 
	\label{centralizer-operations} 
	For $x, y, \lambda \in \cO_{ r},$ 
	\begin{enumerate}
		\item $e_{\lambda} \tilde A e_\lambda^{-1} =\tilde A + \lambda I +\mat{0}{\tilde{a}^{-1} \lambda( \tilde \beta - \lambda ) }{0}{-2 \lambda}.$

		\item For invertible $xI + y\tilde A ,$
 \begin{enumerate}
\item $[ e_\lambda , xI + y\tilde A] =  I +\frac{\lambda y}{\det(x I + y \tilde A)} \mat {x+ \lambda y } {\tilde{a}^{-1}(x  (\tilde \beta-\lambda) -\tilde \alpha y )}{\tilde{a} y}{-x }.$
\item $[ e_\lambda , xI + y\tilde A] =  I \,\, \mathrm{mod}\,(\pi^\ell)$ if and only if $\lambda y =0  \,\, \mathrm{mod}\,(\pi^\ell).$
\end{enumerate}
		\item For $ T = \mat {t_{11}}{t_{12}}{t_{21}}{t_{22}} \in M_2(\cO_r),$ $$
		[e_{\lambda}, I + \pi^\ell T] = I + (\tilde{a}^{-1} \lambda )\pi^\ell  \mat {t_{21}}{ t_{22}-t_{11} -\tilde{a}^{-1} \lambda t_{21}}{0}{-t_{21}} .$$ 
		
	\end{enumerate}
\end{lemma}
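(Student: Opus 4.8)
The plan is to verify the three commutator formulas in Lemma~\ref{centralizer-operations} by direct matrix computation, since each is an identity in $M_2(\cO_r)$ that requires no structural insight beyond careful bookkeeping. The key observation throughout is that $e_\lambda = \mat{1}{\tilde{a}^{-1}\lambda}{0}{1}$ has inverse $e_\lambda^{-1} = \mat{1}{-\tilde{a}^{-1}\lambda}{0}{1}$, so conjugation and commutator brackets can be expanded explicitly. I would treat the three parts in order, as the later parts reuse the structural form uncovered in part~(1).

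For part~(1), I would simply compute the product $e_\lambda \tilde A e_\lambda^{-1}$ with $\tilde A = \mat{0}{\tilde{a}^{-1}\tilde\alpha}{\tilde a}{\tilde\beta}$. Multiplying $e_\lambda \tilde A$ first and then applying $e_\lambda^{-1}$ on the right, I expect the $(1,1)$ entry to pick up a $\lambda$ (from $\tilde a \cdot \tilde a^{-1}\lambda$), the $(2,2)$ entry to lose a $\lambda$, and the off-diagonal entries to absorb the quadratic corrections in $\lambda$; collecting terms should reproduce $\tilde A + \lambda I + \mat{0}{\tilde{a}^{-1}\lambda(\tilde\beta-\lambda)}{0}{-2\lambda}$ exactly. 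For part~(2), the element $xI + y\tilde A$ is a general member of the centralizer algebra, and $[e_\lambda, xI+y\tilde A] = e_\lambda(xI+y\tilde A)e_\lambda^{-1}(xI+y\tilde A)^{-1}$. I would first compute $e_\lambda(xI+y\tilde A)e_\lambda^{-1}$ using linearity together with part~(1) (since $e_\lambda I e_\lambda^{-1} = I$), then right-multiply by the inverse $(xI+y\tilde A)^{-1}$, whose entries are governed by $\det(xI+y\tilde A) = g(x,y)$ via the adjugate formula. The factor $\frac{\lambda y}{\det(xI+y\tilde A)}$ in the stated answer confirms that the correction term is exactly $\lambda$ times the derivative-like piece of the conjugation, scaled by the inverse determinant.

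For part~(3), the element $I + \pi^\ell T$ lies in $M^\ell$, and I would expand $[e_\lambda, I+\pi^\ell T] = e_\lambda(I+\pi^\ell T)e_\lambda^{-1}(I+\pi^\ell T)^{-1}$. Writing $(I+\pi^\ell T)^{-1} = I - \pi^\ell T + \pi^{2\ell}T^2 - \cdots$, the crucial simplification is that terms of order $\pi^{2\ell}$ may or may not vanish depending on $r$ versus $2\ell$, but the stated formula is linear in the entries $t_{ij}$, so I expect the bracket to reduce to $I + \tilde{a}^{-1}\lambda\,\pi^\ell\,(e_{12}$-type correction$)$ after the leading cancellations; concretely I would compute $e_\lambda \pi^\ell T e_\lambda^{-1} - \pi^\ell T$ and read off the entries, noting that conjugation by $e_\lambda$ shifts the strictly-lower entry $t_{21}$ into the diagonal and $(1,2)$ slots in the pattern $\mat{t_{21}}{t_{22}-t_{11}-\tilde{a}^{-1}\lambda t_{21}}{0}{-t_{21}}$.

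Since all three identities are purely computational, there is no genuine obstacle, only the risk of sign or transcription errors. The mild subtlety I would watch most carefully is in part~(2), where one must correctly handle the inverse $(xI+y\tilde A)^{-1}$ and confirm that all contributions organize into the single scalar multiple $\frac{\lambda y}{\det(xI+y\tilde A)}$ of the displayed matrix; getting the off-diagonal entry $\tilde a^{-1}(x(\tilde\beta-\lambda)-\tilde\alpha y)$ right requires tracking how $\tilde\alpha$ and $\tilde\beta$ enter through $\tilde A$. I would record each matrix product as an explicit $2\times 2$ array and compare entrywise with the claimed right-hand sides to complete the proof.
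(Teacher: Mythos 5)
Your proposal is correct and takes essentially the same approach as the paper, which verifies part (2) by exactly this explicit matrix expansion and dismisses parts (1) and (3) as straightforward computations that it omits. The one hedge worth removing from part (3): since the paper fixes $\ell = \lceil r/2 \rceil$, one always has $2\ell \geq r$, hence $\pi^{2\ell} = 0$ in $\cO_r$, so the quadratic terms in your expansion of $(I+\pi^\ell T)^{-1}$ vanish unconditionally and the stated formula is exact, with no case distinction between $r$ and $2\ell$ needed.
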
 
\begin{proof} We prove only (2) here. Rest of the proof follows by straightforward computations.
	For invertible $(xI + y\tilde{A})$,  $[e_\lambda, (xI + y\tilde{A})]$ equals 
	\[
	\frac{1}{\det(x I + y \tilde A)} \mat {\det(x I + y \tilde A) + \lambda y(x+ \lambda y) } {\tilde{a}^{-1}(x y \lambda(\tilde \beta-\lambda) -\lambda \tilde \alpha y^2 )}{\tilde{a}\lambda y^2}{\det(x I + y \tilde A) - \lambda xy } .
	\]
	This gives proof of  (2)(a). Observe that by (2)(a),   $\lambda y =0  \,\, \mathrm{mod}\,(\pi^\ell)$ implies $[ e_\lambda , xI + y\tilde A] =  I \,\, \mathrm{mod}\,(\pi^\ell).$
For invertible $xI + y\tilde A ,$ note that either $x\in \cO_r^\times$ or $\tilde{a}y\in \cO_r^\times.$ Therefore from  (2)(a), we obtain that $[ e_\lambda , xI + y\tilde A] =  I \,\, \mathrm{mod}\,(\pi^\ell)$ implies   $\lambda y =0  \,\, \mathrm{mod}\,(\pi^\ell).$ This gives proof of (2)(b).
\end{proof}

Recall that 
$\mathbb E_{\tilde{A}} = \{e_\lambda   \in \mathrm{H}_{\tilde{A}}^\ell \mid \psi_{[A]}\,\, \mathrm{extends}\,\, \mathrm{to}\,\, C_S^{\ell}(\tilde{A})\langle e_\lambda\rangle\}.$ 
 The set  
$\mathbb E_{\tilde{A}}$ is in bijective correspondence with the set $ E_{\tilde{A}} := \{\lambda \in \mathrm{h}_{\tilde{A}}^\ell \mid e_\lambda \in \mathbb E_{\tilde{A}} \},$ where $\mathrm{h}_{\tilde{A}}^\ell = \{ x \in \cO_{r } \mid 2x = 0  \,\, \mathrm{mod}\, (\pi^{\ell} ), \, \,  x(x+\tilde \beta) = 0 \,\, \mathrm{mod}\, (\pi^{\ell} ) \}.$

For $x, y, \lambda \in \cO_r,$ define
\begin{eqnarray*}
	f(\lambda, x, y) &=&xy \lambda (\tilde \beta- \lambda ) -\tilde \alpha \lambda y^2 + \lambda ( x^2-1) ,\\
	g(x, y) &=& x^2 + \tilde{\beta} xy - \tilde{\alpha} y^2.
\end{eqnarray*}
Note that $	f(\lambda, x, y)=\lambda ( g(x,y)-1) - \lambda^2  xy .$
We will keep these notations fixed throughout this article.

 For $\lambda \in \mathrm{h}_{\tilde{A}}^\ell $
 and fixed lifts $\tilde{\alpha}, \tilde{\beta} \in \cO_r$ of $\alpha, \beta \in \cO_{\ell'},$ we define the set  
\begin{equation} 
\label{eq: *} 
E_{\lambda, \tilde A} = \{ (x, y) \in \cO_r \times \cO_r \mid g(x,y) = 1 \,\, \mathrm{mod}\,(\pi^\ell) \,\, \mathrm{and} \,\, \lambda y \in \pi^\ell \cO_r  \}
\end{equation} 
and its subset $E_{\lambda, \tilde A}^\circ = \{ (x, y) \in E_{\lambda, \tilde{A}}  \mid  \psi(f(\lambda, x, y)) = 1 \}.  
$
Our first result, explores elements of $E_{ \tilde A}$ through various equivalent conditions.
\begin{thm}
	\label{thm:condition for extension} 
	For $\lambda \in \mathrm{h}_{\tilde{A}}^\ell,$ the following are equivalent. 
	\begin{enumerate}
		\item $E_{\lambda, \tilde A} = E_{\lambda, \tilde A}^\circ .$
		\item Any element of $K^\ell$ which is of the form $[e_\lambda, X]$ for $X \in C_{S}^\ell(\tilde{A})$ is in the kernel of $\psi_{[A]}.$ 
		\item $[\langle e_\lambda\rangle, C_{S}^\ell(\tilde{A}) ] \cap K^\ell$ is contained in the kernel of $\psi_{[A]}.$ 
		\item  $ \lambda \in E_{ \tilde A},$ i.e. there exists an extension of $\psi_{[A]} $ to $C_{S}^\ell(\tilde{A})\langle e_\lambda\rangle.$
	\end{enumerate}
\end{thm}
 We prove this result in Section~\ref{sec:proof of thm:condition for extension}.
The following corollary about $E_{\tilde{A}}$ will be useful.
\begin{corollary}
\label{cor:E_tilde A remark}
	The following hold for $\mathrm{h}_{\tilde{A}}^\ell $ and $E_{\tilde{A}}.$
	\begin{enumerate}
		\item 	 $\mathrm{h}_{\tilde{A}}^\ell \cap  \pi^{\ell'} \cO_r \subseteq E_{\tilde{A}} \subseteq \mathrm{h}_{\tilde{A}}^\ell .$ 
		\item If $\lambda \in E_{\tilde{A}},$ then $\lambda+\pi^\ell \cO_r \subseteq E_{\tilde{A}} .$ In particular, $\pi^\ell \cO_r \subseteq E_{\tilde{A}} .$
		\item For $z\in \pi^{\ell'} \cO_r $ and $\lambda \in  \mathrm{h}_{\tilde{A}}^\ell \setminus \pi^\ell \cO_r , $ we have $f(\lambda +  z, x, y ) = f(\lambda , x, y)$ for all $(x,y)\in E_{\lambda, \tilde{A}}.$ 
	\end{enumerate}
\end{corollary}

\begin{proof}
Let  $\lambda \in \mathrm{h}_{\tilde{A}}^\ell \cap  \pi^{\ell'} \cO_r .$ For $(x,y) \in E_{\lambda, \tilde{A}} ,$ we have $g(x,y) = 1 \,\, \mathrm{mod}\,(\pi^\ell)$ and $ \lambda y \in \pi^\ell \cO_r $.  Therefore
$$f(\lambda,x,y)	= \lambda ( g(x,y)-1) - \lambda^2xy = 0-\lambda(\lambda y)x=0.$$
This implies $(x,y) \in E_{\lambda, \tilde{A}}^\circ.$ Therefore 
$E_{\lambda, \tilde{A}} \subseteq E_{\lambda, \tilde{A}} ^\circ.$ By definition, we have $E_{\lambda, \tilde{A}}^\circ \subseteq E_{\lambda, \tilde{A}} .$ Hence $E_{\lambda, \tilde{A}} = E_{\lambda, \tilde{A}} ^\circ.$ Now (1) follows by Theorem~\ref{thm:condition for extension}. For (2), we observe that if  $\lambda =\lambda' \,\, \mathrm{mod}\,(\pi^\ell)$ then the group generated by 
	$ C_S^{\ell}(\tilde{A})$ and $ \langle e_\lambda\rangle$ is equal to the one generated by $C_S^{\ell}(\tilde{A})$ and $\langle e_{\lambda'}\rangle.$ Therefore 
	$\lambda+\pi^\ell \cO_r \subseteq E_{\tilde{A}} $ whenever $\lambda \in E_{\tilde{A}}.$ For (3),  we note that for $z \in  \pi^{\ell'} \cO_r $ and $\lambda \in  \mathrm{h}_{\tilde{A}}^\ell\setminus \pi^\ell \cO_r , $ we have
	\begin{eqnarray*}
		f(\lambda + z,x,y)	&=& (\lambda + z)  \left( g(x,y)-1 \right) - (\lambda + z)^2xy\\
			&=&f(\lambda ,x,y)+z \left( g(x,y)-1 \right)-(2\lambda z +z^2)xy.
	\end{eqnarray*}
	Note that for $(x,y) \in E_{\lambda, \tilde{A}} ,$ we must have $z \left( g(x,y)-1 \right)=0$ and $2\lambda z xy =2 (\lambda y)zx=0.$ For $\lambda \in  \mathrm{h}_{\tilde{A}}^\ell\setminus \pi^\ell \cO_r ,$ the fact $\lambda y \in \pi^\ell \cO_r $ implies $y\in \pi \cO_r .$ Therefore $z^2 xy =0.$ This gives $	f(\lambda + z,x,y)=	f(\lambda ,x,y).$
\end{proof}

 Let $\boldsymbol{\psi}:\mathbb F_q \rightarrow \mathbb C^\times$ be an additive one-dimensional representation given by
 $\boldsymbol{\psi}(\bar x)=\psi(\pi^{r-1} x),$  for all $x\in \cO_r$ such that $x=\bar x \,\, \mathrm{mod}\,(\pi).$ The following lemma is useful for us.
\begin{lemma}\label{existance-of-zeta}
	There exists a unique $\xi \in \mathbb F_q^\times$ such that $\mathrm{ker}(\boldsymbol{\psi})=\{ \xi x^2 +x \mid x \in \mathbb F_q\}.$
\end{lemma}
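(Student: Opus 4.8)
The plan is to recognize both sides as index-two additive subgroups of the $\mathbb{F}_2$-vector space $\mathbb{F}_q$ and to match them through the trace pairing. First I would note that $\boldsymbol{\psi}$ is a nontrivial additive character valued in $\{\pm 1\}$: it is nontrivial because $\boldsymbol{\psi}(\bar 1) = \psi(\pi^{r-1}) \neq 1$ by the choice of $\psi$, and since $\mathbb{F}_q$ has characteristic two every value satisfies $\boldsymbol{\psi}(\bar x)^2 = \boldsymbol{\psi}(2\bar x) = 1$, so $\boldsymbol{\psi}$ takes values in $\{\pm 1\}$. Consequently $\mathrm{Ker}(\boldsymbol{\psi})$ is an additive subgroup of index two. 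On the other side, for each $\xi \in \mathbb{F}_q^\times$ the map $x \mapsto \xi x^2 + x$ is $\mathbb{F}_2$-linear (the cross term $2\xi xy$ vanishes in characteristic two) with kernel $\{x : x(\xi x + 1) = 0\} = \{0, \xi^{-1}\}$ of size two, so its image $S_\xi := \{\xi x^2 + x \mid x \in \mathbb{F}_q\}$ is likewise an additive subgroup of index two.

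The key step is a trace identity. Writing $\mathrm{Tr}$ for the trace $\mathbb{F}_q \to \mathbb{F}_2$, I would use the standard fact that Frobenius fixes the trace, i.e. $\mathrm{Tr}(u^2) = \mathrm{Tr}(u)$ for all $u$. Applying this with $u = \xi x$ gives
\[
\mathrm{Tr}\big(\xi(\xi x^2 + x)\big) = \mathrm{Tr}\big((\xi x)^2\big) + \mathrm{Tr}(\xi x) = 0 \quad\text{for all } x,
\]
so $S_\xi \subseteq \{y \in \mathbb{F}_q : \mathrm{Tr}(\xi y) = 0\}$. Since both are index-two subgroups, this inclusion is an equality, giving the clean description $S_\xi = \{y : \mathrm{Tr}(\xi y) = 0\}$.

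Finally I would invoke nondegeneracy of the trace form. Every $\{\pm 1\}$-valued additive character of $\mathbb{F}_q$ has the form $\chi_c(y) = (-1)^{\mathrm{Tr}(cy)}$ for a unique $c \in \mathbb{F}_q$, nontrivial precisely when $c \neq 0$; in particular $c \mapsto \mathrm{Ker}(\chi_c) = \{y : \mathrm{Tr}(cy) = 0\}$ is an injection from $\mathbb{F}_q^\times$ into the set of index-two subgroups. Writing $\boldsymbol{\psi} = \chi_{\xi_0}$ for the unique $\xi_0 \in \mathbb{F}_q^\times$ it determines, we get $\mathrm{Ker}(\boldsymbol{\psi}) = \{y : \mathrm{Tr}(\xi_0 y) = 0\} = S_{\xi_0}$, which already yields existence. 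Comparing with the previous paragraph, $\mathrm{Ker}(\boldsymbol{\psi}) = S_\xi$ holds iff $\{y : \mathrm{Tr}(\xi y) = 0\} = \{y : \mathrm{Tr}(\xi_0 y) = 0\}$, which by injectivity of $c \mapsto \mathrm{Ker}(\chi_c)$ forces $\xi = \xi_0$, giving uniqueness (indeed $\xi$ is the trace-dual of $\boldsymbol{\psi}$). The only mildly delicate points are the identity $\mathrm{Tr}(u^2) = \mathrm{Tr}(u)$ and the nondegeneracy of the trace pairing; both are standard finite-field facts, so I expect no real obstacle beyond assembling them in the right order.
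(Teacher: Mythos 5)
Your proof is correct, but it takes a genuinely different route from the paper's. The paper argues by pure counting: writing $q = 2^n$, the map $x \mapsto \eta x^2 + x$ is $\mathbb{F}_2$-linear of rank $n-1$, so each image $V_\eta = \{\eta x^2 + x \mid x \in \mathbb{F}_q\}$ is a hyperplane, as is $\mathrm{Ker}(\boldsymbol{\psi})$; since $\mathbb{F}_q$ has exactly $q-1$ hyperplanes and the scaling identity $V_\eta = \eta^{-1}V_1$ shows the $V_\eta$, $\eta \in \mathbb{F}_q^\times$, are pairwise distinct, the family $\{V_\eta\}$ exhausts all hyperplanes and existence and uniqueness drop out simultaneously, with no mention of the trace at all. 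You instead identify both sides explicitly through the trace form: the Frobenius invariance $\mathrm{Tr}(u^2) = \mathrm{Tr}(u)$ gives $S_\xi = \{y \in \mathbb{F}_q : \mathrm{Tr}(\xi y) = 0\}$ (the additive Artin--Schreier kernel), and nondegeneracy of the trace pairing writes $\boldsymbol{\psi}(\bar x) = (-1)^{\mathrm{Tr}(\xi_0 \bar x)}$ for a unique $\xi_0 \in \mathbb{F}_q^\times$, so that $\xi = \xi_0$. What your approach buys: it is constructive, exhibiting $\xi$ as the trace-dual of $\boldsymbol{\psi}$ rather than merely proving it exists, which could be useful wherever $\xi$ must actually be computed (e.g.\ in the explicit conditions of Theorem~\ref{thm:simplification of extension conditions}); it also makes the pairwise distinctness of the subgroups transparent, whereas the paper's final step ``$V_\eta \neq V_{\eta'}$ follows because $V_\eta = \eta^{-1}V_1$'' tacitly uses that the multiplicative stabilizer of $V_1$ is trivial --- a fact your description $V_1 = \mathrm{Ker}(\mathrm{Tr})$ together with nondegeneracy supplies immediately. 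What the paper's approach buys is economy: it needs only the rank computation and a dimension count, with no appeal to duality of additive characters.
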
 
\begin{proof}
For $q=2,$ the lemma follows because $\mathbb F_q^\times=\{1\} $ and  $\mathrm{ker}(\boldsymbol{\psi})=\{0\}=\{  x^2 +x \mid x \in \mathbb F_q\}.$
Let $q = 2^n$ be such that $n\geq 2.$ 
For $\eta \in \mathbb F_q^\times,$ the map $f_\eta : \mathbb F_q\rightarrow \mathbb F_q;$  $x \mapsto \eta x^2 +x$ is linear (over $\mathbb F_2$) and is of rank $n-1.$ Therefore the image set
	$V_\eta:=\{ \eta x^2 +x \mid x \in \mathbb F_q\}$ forms an $(n-1)$-dimensional subspace of $\mathbb F_q.$ Also, we note that $\mathrm{ker}(\boldsymbol{\psi})$ is an index two subgroup of  $\mathbb F_q$ and therefore it is an $(n-1)$-dimensional subspace of $\mathbb F_q .$ There are exactly $q-1\,$ $\mathbb F_2$-subspaces of $\mathbb F_q$ of dimension $n-1.$
	Since $ |\mathbb F_q^\times|=q-1,$ it is enough to prove $V_\eta\neq V_{\eta'}$ whenever $\eta\neq \eta'.$ 

Let $\eta, \eta' \in \mathbb F_q^\times$ such that  $\eta\neq \eta'.$ 
	Observe that $$V_\eta=\{ \eta x^2 +x \mid x \in \mathbb F_q\}=\{\eta^{-1}(( \eta x)^2 +(\eta x)) \mid x \in \mathbb F_q\}=\eta^{-1}V_1.$$
Similarly $V_{\eta'}=(\eta')^{-1}V_1.$
Hence to show $V_\eta\neq V_{\eta'},$ it is enough to show that $\eta' \eta^{-1} V_1\neq V_1.$   Suppose $\eta' \eta^{-1} V_1= V_1.$ Then $(\eta' \eta^{-1} )^m V_1= V_1$ for $m\geq 1.$ Therefore for every $x \in V_1,$ we obtain
$Hx\subseteq V_1, $ where $H$ is the subgroup of $\mathbb F_q^\times$ generated by $\eta' \eta^{-1}.$ Observe that 
$$V_1=\{0\} \sqcup (\cup_{x\in V_1 \setminus \{0\}}Hx).$$
Since $\{Hx \mid x \in V_1\}$ is a collection of cosets of $H$ in $\mathbb F_q^\times,$ we obtain that $|H|$ is a divisor of $|V_1|-1=2^{n-1}-1.$  
 $H$ is a subgroup of $\mathbb F_q^\times$ implies $|H|$ is a divisor of $|\mathbb F_q^\times|=2^{n}-1.$ Observe that 
$1= (2^{n}-1)-2\times(2^{n-1}-1).$
Therefore $|H|$ must divide $1$ and hence $|H|=1.$  It is not possible because $ \eta' \eta^{-1} \in H$ and $ \eta' \eta^{-1} \neq 1.$ Therefore we must have $\eta' \eta^{-1} V_1\neq V_1.$
\end{proof}
\begin{remark}
	({\bf Definition of $\xi$}) Now onwards, for  fixed $\psi : \cO_r \rightarrow \mathbb C^\times$ and $\boldsymbol{\psi}:\mathbb F_q \rightarrow \mathbb C^\times$ as above, we fix the notation $\xi$ for the unique element in $\mathbb F_q^\times$ such that
 $\mathrm{ker}(\boldsymbol{\psi})=\{ \xi x^2 +x \mid x \in \mathbb F_q\}.$
\end{remark}
Let $\cO \in \dvrtwozero. $
Let $\ee$ be the ramification index of $\cO.$ The characterization of  $E_{\tilde{A}}$ for $r \geq 4\ee$ is given by the following result. 
\begin{thm}\label{S_A-in-number}
Let $\cO \in  \dvrtwozero$. The following hold for $E_{\tilde{A}}$. 
	\begin{enumerate}
		\item  For $ r > 2\ee$ with $\beta \in \cO_{\ell'}^\times,$ 	$E_{\tilde{A}} =\pi^{\ell} \cO_r .$
		\item For $r = 4\ee$ with $\beta=\pi v \in \pi \cO_{\ell'},$ $E_{\tilde{A}}$ is either $ \{0, \pi^{\ell-1} ( \tilde \xi w^{2})^{-1}\}+\pi^\ell \cO_r $ or $\pi^\ell\cO_r ,$
		where $\tilde \xi \in \cO_r^\times$is a lift of $\xi$  and 
		$w  \in \cO_r^\times$ such that $2=\pi^{\ee} w.$ In particular for $\ee=1,$ we have   
		\[
		E_{\tilde{A}} = \begin{cases} \{ 0,  \pi (\tilde \xi w^{2})^{-1} \}+ \pi^\ell\cO_r &  \mathrm{for}\,\, \alpha =(\frac{\tilde v}{w})^2+ (\frac{1}{\tilde \xi w^3})^2 \,\, \mathrm{mod}\,(\pi),\\
		\pi^{\ell}\cO_r &  \mathrm{for}\,\,\alpha \neq(\frac{\tilde v}{w})^2+ (\frac{1}{\tilde \xi w^3})^2 \,\, \mathrm{mod}\,(\pi),\\
		\end{cases}
		\]
		
		where  $ \tilde v \in \cO_r$ is a lift of  $v.$ 
		
		\item  For $ r > 4\ee$ with $\beta\in \pi \cO_{\ell'},$	$E_{\tilde{A}} =\pi^{\ell'}\cO_r .$

	\end{enumerate}
\end{thm}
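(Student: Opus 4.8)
The engine for all three parts is Theorem~\ref{thm:extension-condition}, which says that $\lambda\in E_{\tilde A}$ exactly when $\psi(f(\lambda,x,y))=1$ for every $(x,y)\in E_{\lambda,\tilde A}$, together with the identity $f(\lambda,x,y)=\lambda\bigl(g(x,y)-1\bigr)-\lambda^2 xy$ recorded in the proof of Lemma~\ref{E_tilde A remark}. Since $\Char(\cO)=0$ with $\mathrm{val}(2)=\mathrm{e}$, I would first pin down $\mathrm{h}_{\tilde A}^\ell$ in each case: the constraint $2x\in(\pi^\ell)$ reads $\mathrm{val}(x)\geq \ell-\mathrm{e}$, and combined with $x(x+\tilde\beta)\in(\pi^\ell)$ this gives $\mathrm{h}_{\tilde A}^\ell=(\pi^\ell)$ when $\beta$ is a unit and $\ell>\mathrm{e}$, gives $\mathrm{h}_{\tilde A}^\ell=(\pi^{\ell-1})$ when $r=4\mathrm{e}$ and $\mathrm{val}(\beta)=1$, and gives $(\pi^{\ell'})\subseteq \mathrm{h}_{\tilde A}^\ell$ when $r>4\mathrm{e}$ and $\beta\in\pi\cO$.

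Part (1) then follows immediately: for $r>2\mathrm{e}$ one has $\ell>\mathrm{e}$, so $\mathrm{h}_{\tilde A}^\ell=(\pi^\ell)$, and Lemma~\ref{E_tilde A remark}(1) squeezes $E_{\tilde A}$ between $\mathrm{h}_{\tilde A}^\ell\cap(\pi^{\ell'})=(\pi^\ell)$ and $\mathrm{h}_{\tilde A}^\ell=(\pi^\ell)$. For Part (3) the lower bound $(\pi^{\ell'})\subseteq E_{\tilde A}$ is again Lemma~\ref{E_tilde A remark}(1), once one checks $(\pi^{\ell'})\subseteq\mathrm{h}_{\tilde A}^\ell$. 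The substance of Part (3) is the reverse inclusion: for $\lambda\in\mathrm{h}_{\tilde A}^\ell$ with $\mathrm{val}(\lambda)<\ell'$ I would exhibit the explicit witness $(x,y)=(1+\pi^{\ell-\mathrm{e}}u,0)\in E_{\lambda,\tilde A}$, for which $f=\lambda(x^2-1)=\lambda\pi^\ell w\bigl(u+\pi^{\ell-2\mathrm{e}}w^{-1}u^2\bigr)$ with $2=\pi^{\mathrm{e}}w$. Since $r>4\mathrm{e}$ forces $\ell-2\mathrm{e}\geq 1$, the map $u\mapsto u+\pi^{\ell-2\mathrm{e}}w^{-1}u^2$ is a bijection of $\cO_r$ (its difference quotient $1+\pi^{\ell-2\mathrm{e}}w^{-1}(u+u')$ is a unit), so $f$ ranges over the whole ideal $(\pi^{\mathrm{val}(\lambda)+\ell})$; as $\mathrm{val}(\lambda)+\ell<r$ and $\psi$ is primitive, $\psi$ is nontrivial on this ideal and yields a witness with $\psi(f)\neq 1$. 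This excludes $\mathrm{val}(\lambda)<\ell'$ and gives $E_{\tilde A}=(\pi^{\ell'})$.

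Part (2), where $r=4\mathrm{e}$, is the delicate borderline and the main obstacle, precisely because now $\ell-2\mathrm{e}=0$: the quadratic term in the witness above has the \emph{same} valuation as the linear one, so the bijection degenerates into the Artin--Schreier map $u\mapsto u+w^{-1}u^2$ modulo $\pi$, and instead of excluding $\lambda$ outright one obtains only an Artin--Schreier obstruction. Here $\mathrm{h}_{\tilde A}^\ell=(\pi^{\ell-1})$, so $\lambda=\pi^{\ell-1}\mu$ is recorded by $\bar\mu\in\mathbb F_q$ and, for $\mu$ a unit, $f$ lands in the top layer $(\pi^{r-1})$ where $\psi$ is computed by $\boldsymbol{\psi}$. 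Feeding the family $y=0$, $x=1+\pi^{\mathrm{e}}u$ into the condition $\psi(f)=1$ turns it into $\bar\mu(\bar u^2+\bar w\bar u)\in\mathrm{Ker}(\boldsymbol{\psi})$ for all $\bar u$; using $\{\bar u^2+\bar w\bar u\}=\bar w^2\xi\,\mathrm{Ker}(\boldsymbol{\psi})$ from Lemma~\ref{existance-of-zeta} and the fact that the only scalar stabilizing the hyperplane $\mathrm{Ker}(\boldsymbol{\psi})$ is $1$, this pins $\bar\mu\in\{0,(\xi\bar w^2)^{-1}\}$. Hence $E_{\tilde A}$ is either $(\pi^\ell)$ or $\{0,\pi^{\ell-1}(\tilde\xi w^2)^{-1}\}+(\pi^\ell)$, as claimed. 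To decide which alternative occurs when $\mathrm{e}=1$, I would run the same top-layer computation keeping the $y$-direction as well: with $x=1+\pi t$, $y=\pi y'$ the condition $\psi(f)=1$ separates into the $t$-part (already handled) and a $y'$-part $\bar\mu\bar\alpha y'^2+\bar\mu(\bar v+\bar\mu)y'\in\mathrm{Ker}(\boldsymbol{\psi})$ for all $y'$; substituting $\bar\mu=(\xi\bar w^2)^{-1}$ and again invoking the scalar-stabilizer fact reduces this to the single equation $\bar\alpha=(\bar v/\bar w+1/(\xi\bar w^3))^2$, which is exactly the stated dichotomy between the two-coset and the one-coset cases.
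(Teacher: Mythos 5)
Your overall route is the paper's own: decide membership in $E_{\tilde{A}}$ via Theorem~\ref{thm:extension-condition} by exhibiting explicit witnesses $(x,y)\in E_{\lambda,\tilde{A}}$ with $\psi(f(\lambda,x,y))\neq 1$, and at the critical valuation convert the extension condition into an Artin--Schreier constraint resolved by the uniqueness of $\xi$ from Lemma~\ref{existance-of-zeta}. Parts (1) and (3) are correct; in (3) your one-parameter family $x=1+\pi^{\ell-\mathrm{e}}u$ together with the bijectivity of $u\mapsto u+\pi^{\ell-2\mathrm{e}}w^{-1}u^2$ is a slightly slicker packaging of the paper's single witness $x=1+\pi^{r-\mathrm{e}-i-1}(wv)^{-1},\ y=0$, and your ``trivial scalar stabilizer of $\mathrm{Ker}(\boldsymbol{\psi})$'' argument, as well as the $\mathrm{e}=1$ dichotomy computation via the $y'$-part, match the paper's computations in substance.

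The genuine gap is in part (2) for general $\mathrm{e}$. You assert $\mathrm{h}_{\tilde{A}}^{\ell}=(\pi^{\ell-1})$, which holds only when $m_0:=\min\{\mathrm{e},\val(\tilde\beta)\}=1$; but the hypothesis of (2) is merely $\beta\in\pi\cO_{\ell'}$, so for $\mathrm{e}\geq 2$ and $\val(\beta)\geq 2$ one has $\mathrm{h}_{\tilde{A}}^{\ell}=(\pi^{\ell-m_0})$ with $m_0\geq 2$, and your argument never excludes the candidates $\lambda=\pi^i v$ of valuation $\ell-m_0\leq i\leq\ell-2$ (this is exactly step (a) in the paper's proof). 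These are not reached by your Artin--Schreier analysis, which lives at valuation $\ell-1$, nor by your part-(3) family as written, since you fixed the exponent $j=\ell-\mathrm{e}$ and for $r=4\mathrm{e}$ that choice degenerates for every $\lambda$. The repair stays entirely inside your framework: for such $\lambda$ take $x=1+\pi^{j}u$, $y=0$ with $j=r-\mathrm{e}-i-1=3\mathrm{e}-1-i$; then $i\leq\ell-2$ forces $j\geq\mathrm{e}+1$, so $x^2-1=\pi^{j+\mathrm{e}}w\bigl(u+\pi^{j-\mathrm{e}}w^{-1}u^2\bigr)$ has valuation $>\ell$ (hence $(x,0)\in E_{\lambda,\tilde{A}}$), the quadratic correction is subordinate to the linear term because $j-\mathrm{e}\geq 1$, and $f=\lambda(x^2-1)$ sweeps the ideal $(\pi^{r-1})$ as $u$ varies, so $\psi(f)\neq 1$ for a suitable $u$ and these $\lambda$ are excluded. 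Only at $i=\ell-1$ does $j-\mathrm{e}$ vanish, which is precisely where your borderline Artin--Schreier analysis belongs; with the missing exclusion step added, your proof of (2) becomes complete.
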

A proof of this result is included in Section~\ref{sec:proof of thm:condition for extension}.
The following corollary of this result will be used in
Section~\ref{sec:proof of thm:quotient_abelian}.

\begin{corollary}\label{cor: S_A-in-number} 
	For $r=2\ell\geq 4\ee,$ $E_{\tilde{A}} \subseteq \{0,  \lambda \}+  \pi^{\ell}\cO_{r}$ for some $\lambda \in \mathrm{h}_{\tilde{A}}^{\ell}.$
\end{corollary}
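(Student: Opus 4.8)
The plan is to deduce this directly from Theorem~\ref{S_A-in-number}, the point being that for the even level $r = 2\ell$ one has $\ell = \lceil r/2\rceil = \lfloor r/2 \rfloor = \ell'$, so the ideals $(\pi^\ell)$ and $(\pi^{\ell'})$ coincide and every description of $E_{\tilde A}$ furnished by that theorem is already of the asserted shape. First I would split according to whether $\beta$ is invertible in $\cO_{\ell'}$.

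If $\beta \in \cO_{\ell'}^\times$, then since $r = 2\ell \geq 4\mathrm{e} > 2\mathrm{e}$, part (1) of Theorem~\ref{S_A-in-number} applies and gives $E_{\tilde A} = (\pi^\ell)$. Writing $(\pi^\ell) = \{0\} + (\pi^\ell)$ we take $\lambda = 0$, which trivially lies in $\mathrm{h}_{\tilde{A}}^\ell$. If instead $\beta \in \pi\cO_{\ell'}$, then because $r \geq 4\mathrm{e}$ we are in exactly one of the remaining two cases of the theorem. When $r = 4\mathrm{e}$, part (2) exhibits $E_{\tilde A}$ as either $\{0, \pi^{\ell-1}(\tilde\xi w^{2})^{-1}\} + (\pi^\ell)$ or $(\pi^\ell)$; both are already of the form $\{0, \lambda\} + (\pi^\ell)$, the first with $\lambda = \pi^{\ell-1}(\tilde\xi w^{2})^{-1}$ and the second with $\lambda = 0$. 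When $r > 4\mathrm{e}$, part (3) gives $E_{\tilde A} = (\pi^{\ell'}) = (\pi^\ell)$, and again $\lambda = 0$ suffices.

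It remains to note that in every case the chosen representative $\lambda$ lies in $\mathrm{h}_{\tilde{A}}^\ell$, as required by the statement. For $\lambda = 0$ this is immediate, and for $\lambda = \pi^{\ell-1}(\tilde\xi w^{2})^{-1}$ it follows from $\lambda \in E_{\tilde A} \subseteq \mathrm{h}_{\tilde{A}}^\ell$ by Lemma~\ref{E_tilde A remark}(1). Since there is no genuinely difficult computation here, the only care needed is bookkeeping: one must check that the hypotheses $r > 2\mathrm{e}$, $r = 4\mathrm{e}$, and $r > 4\mathrm{e}$ used in the three parts of Theorem~\ref{S_A-in-number} are each met under the standing assumption $r = 2\ell \geq 4\mathrm{e}$, and that the division into $\beta$ invertible, $r = 4\mathrm{e}$ with $\beta$ non-invertible, and $r > 4\mathrm{e}$ with $\beta$ non-invertible exhausts all possibilities. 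This exhaustiveness is the only mild subtlety, and it holds precisely because $r \geq 4\mathrm{e}$ forces any non-invertible-$\beta$ case into either $r = 4\mathrm{e}$ or $r > 4\mathrm{e}$.
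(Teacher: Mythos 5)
Your proof is correct and is essentially the paper's own argument: the paper simply cites Theorem~\ref{S_A-in-number}, and your case analysis ($\beta$ invertible via part (1); $\beta \in \pi\cO_{\ell'}$ with $r=4\mathrm{e}$ via part (2); $r>4\mathrm{e}$ via part (3), using $\ell'=\ell$ when $r=2\ell$) is exactly the bookkeeping that citation implicitly carries out. Your extra check that $\lambda \in \mathrm{h}_{\tilde{A}}^{\ell}$ via Lemma~\ref{E_tilde A remark}(1) is a correct and harmless addition.
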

\begin{proof}
	This result  directly follows from Theorem~\ref{S_A-in-number}.
\end{proof}
We now proceed to state the results analogous to the above for $\cO \in \dvrtwoplus.$ From now onwards till  Theorem~\ref{thm:simplification of extension conditions}, we assume that $\cO \in \dvrtwoplus$.
To characterize the elements of $E_{\tilde{A}}$ for $\cO \in \dvrtwoplus$, we first fix a few notations.  These notations will be used throughout  the article for $\cO \in \dvrtwoplus$. 

For $v \in \cO_r,$  and $0\leq i \leq  r-1,$ we denote $(v)_i\ $ for the unique elements in $\mathbb F_q $ such that $v= (v)_0 + (v)_1 \pi +\cdots + 
(v)_{r-1} \pi^{r-1}.$ Note that  $v \in \cO_r^2$
if and only if $(v)_{2i+1} = 0$ for all $i$.  

\begin{definition} ({\bf Definition of $\psi$ for $\cO \in \dvrtwoplus$})
Let $\boldsymbol{\psi}:\mathbb F_q \rightarrow   \mathbb C^\times$ be a fixed additive one-dimensional representation such that $\boldsymbol{\psi}(1)\neq 1.$  Define $\psi: \cO_r  \rightarrow   \mathbb C^\times$ by  $\psi(v)=\boldsymbol{\psi}((v)_{r-1})$ for  $v\in \cO_r.$
\end{definition}
\begin{remark}
Note that for any additive one-dimensional representation $\psi':\cO_r  \rightarrow   \mathbb C^\times$  such that $\psi'(\pi^{r-1})\neq 1,$ there exist a unique $u \in \cO_r^\times $ such that $\psi'(v)=\psi(uv),$ for all $v\in \cO_r.$
\end{remark}

Recall that for $A = \mat 0 {a^{-1} \alpha} a \beta \in M_2(\cO_{\ell'}),$ $\bol s$ and $\bol k$ are defined as follows.   
\begin{eqnarray}
\bol  k & = & \val(\beta). \nonumber \\
\bol s & = &\begin{cases}
2 \lfloor \bol k/2 \rfloor + 1 &  \mathrm{if}\,\,   \alpha = v^2 \,\,\mathrm{mod}\,(\pi^ \bol k ), \\ m &  \mathrm{if}\,\, \alpha = v_1^2 + \pi^m v_2^2 \,\,\mathrm{mod}\,(\pi^ \bol k ) \,\,\mathrm{for}\,\,\mathrm{odd}\,\, m < \bol k \,\,\mathrm{and}\,\, v_2 \in \cO_{\ell'}^\times. 
\end{cases}\nonumber
\end{eqnarray}
We remark that for any lift $\tilde{ \beta}\in\cO_r$ of $\beta,$ we have $\bol k=  \min \{ \val({\tilde{\beta}}), \ell' \}.$ The following examples illustrate these notions. 

\begin{example}\label{example for k and s. No.1}
Assume $r=9.$ Then $\ell'=\lfloor \frac{9}{2}\rfloor=4.$ Let $\alpha=1+\pi+\pi^2$  and $\beta = \pi^2 $ be in $\cO_4.$ 
Then $\bol k=  \val(\beta)= 2.$ Note that  $\alpha = 1^2+\pi \times 1^2 \,\,\mathrm{mod}\,(\pi^2 ).$ Therefore $\bol s =1.$
\end{example}
\begin{example}\label{example for k and s. No.2}
Assume $r=8.$ Then $\ell'=\lfloor \frac{8}{2}\rfloor=4.$ Let $\alpha=1+\pi^2$  and $\beta = 0 $ be in $\cO_4.$ 
Then $\bol k= \val(\beta)= \val(0)=4.$ Note that  $\alpha = (1+\pi )^2 \,\,\mathrm{mod}\,(\pi^4 ).$ Therefore $\bol s = 2 \lfloor 4/2 \rfloor + 1 =5.$
\end{example}
\begin{lemma} 
	\label{defintion-of-s-and-w1} 
	For any lift $\tilde{\alpha}$ of $\alpha,$ there exists  $w_1, w_2 \in \cO_{r}$ such that $\tilde \alpha = w_1^2 + \pi^{\bol s} w_2^2 \in \cO_r.$ Further, if $\bol s < \bol k$ then $w_2 \in \cO_{r}^\times$. 
\end{lemma}
\begin{proof}
	The first part  follows   from the definition of  $\bol s$ and the fact that
	$v \in \cO_r^2$ if and only if $(v)_{2i+1} = 0$ for all $i.$
	For $\bol s <\bol k,$ the element $\alpha \in \cO_{\ell'}$ is itself of the form $v^2 + \pi^{\bol s} u^2$ for some $v \in \cO_{\ell'}$ and $u \in \cO_{\ell'}^\times.$ Therefore for  any lift $\tilde\alpha= w_1^2 + \pi^{\bol s} w_2^2 ,$ we have $ w_2=u \, \, \mathrm{mod}\, (\pi).$ Hence  $w_2 \in \cO_{r}^\times.$	
\end{proof}
\begin{remark}\label{proper-tilde-alpha-and-tilde-beta}
	({\bf Definition of $\tilde{\alpha},$ $\tilde{\beta},$ $w_1$ and $w_2.$}) Now onwards, we fix lifts $\tilde{\alpha}$ and $\tilde{\beta}$ of $\alpha$ and $\beta$ respectively. We also fix $w_1, w_2 \in \cO_r$  such that  $\tilde{\alpha}= w_1^2 + \pi^{\bol s} w_2^2 \in \cO_r.$ For $A = \mat 0{a^{-1} \alpha}a{\beta}$, we fix $\tilde{A}  = \mat 0{\tilde{a}^{-1}\tilde \alpha}{\tilde{a}}{\tilde \beta} $, where $\tilde \alpha$ and  $\tilde \beta$ correspond to the above fixed choices. 
\end{remark}
For $\lambda \in \cO_r$ and $\tilde{A}  = \mat 0{\tilde{a}^{-1}\tilde \alpha}{\tilde{a}}{\tilde \beta} , $ we define $\bol i_{\lambda, \tilde{A}}$, $\bol j_{\lambda, \tilde{A}}$ and $\delta_{\lambda, \tilde{A}}$.  
\begin{eqnarray}
\bol i_{\lambda, \tilde{A}}& = &\val({\lambda}), \nonumber \\
\bol j_{\lambda, \tilde{A}} & = & \min \{ \val({\lambda+ \tilde{\beta}}), \ell' \}, \nonumber \\
 \delta_{\lambda, \tilde{A}}&=& \bol j_\lambda - \bol s - \mathrm{max} \{ \ell - \bol i_\lambda, \ell - \bol k, \lceil (\ell - \bol s) /2 \rceil \}. \nonumber
\end{eqnarray}
We remark that $\tilde{A}  = \mat 0{\tilde{a}^{-1}\tilde \alpha}{\tilde{a}}{\tilde \beta} $ is fixed throughout, therefore we leave  $\tilde{A}$ notation from  $\bol i_{\lambda, \tilde{A}}, \bol j_{\lambda, \tilde{A}}, \delta_{\lambda, \tilde{A}}$ and write these as $\bol i_\lambda$, $\bol j_{\lambda}$, $\delta_\lambda$ respectively. We will use these notations throughout the article. 
\begin{example}
Choose $\alpha=1+\pi+\pi^2,$ $ \beta=\pi^2$ and $\tilde{\beta}=\pi^2+ \pi^4$ as in Example~\ref{example for k and s. No.1}. We have $\ell'=4,$ $\ell=5,$ $\bol k=2$ and $\bol s =1.$ 
For $\lambda=\pi^3\in \cO_9,$ we have $\bol i _\lambda=\val(\pi^3)=3,$ $\bol  j_\lambda  =  \min \{ \val({\pi^3+\pi^2+ \pi^4}), 4 \}=\min \{ 2, 4 \}=2.$ Therefore $\mathrm{max} \{ \ell - \bol i_\lambda, \ell - \bol k, \lceil (\ell - \bol s) /2 \rceil \}=\mathrm{max} \{ 5-3, 5-2, \lceil (5 - 1) /2 \rceil \}=3.$ Hence $\delta_\lambda= \bol j_\lambda - \bol s - \mathrm{max} \{ \ell - \bol i_\lambda, \ell - \bol k, \lceil (\ell - \bol s) /2 \rceil \}=2-1-3=-2.  $

For $\lambda=\pi^2+ \pi^4+\pi^6 \in \cO_9,$  $\bol i _\lambda=\val(\pi^2+ \pi^4+\pi^6)=2.$ Note that $\lambda+\tilde{ \beta}=\pi^6.$ Therefore  $\bol  j_\lambda  =\min \{ \val({\pi^6}), 4 \}=  \min \{ 6, 4 \}=4.$ So we have $\mathrm{max} \{ \ell - \bol i_\lambda, \ell - \bol k, \lceil (\ell - \bol s) /2 \rceil \}=\mathrm{max} \{ 3, 3, 2 \}=3.$ Hence $\delta_\lambda=4-1-3=0.  $
\end{example}

\begin{lemma}\label{i-j-k-s-lem}
	The following are true for $\lambda \in \cO_r$ and $\tilde{A} \in M_2(\cO_r)$.   
	\begin{enumerate}
		\item $\bol s \leq \bol k+1 .$
		\item If $ \bol i_\lambda \neq \bol k,$ then $\bol j_\lambda=  \mathrm{min}\{ \bol i_\lambda, \bol k\}.$
		\item If $\bol i_\lambda \leq \ell'$ and  $\bol j_\lambda=\ell',$ then $\bol k = \bol i_\lambda.$
		\item $\bol i_\lambda \geq \mathrm{min} \{ \bol j_\lambda, \bol k\},$ $\bol j_\lambda \geq \mathrm{min} \{ \bol i_\lambda, \bol k\}$ and $\bol k \geq \mathrm{min} \{ \bol i_\lambda, \bol j_\lambda\}.$
	\end{enumerate}
\end{lemma}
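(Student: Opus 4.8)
The plan is to treat assertion (1) separately from (2)--(4), since the first is purely about the definition of $\bol s$ while the other three are statements in elementary valuation theory. Throughout I would exploit that $\Char(\cO)=2,$ so that $-1=1$ in $\cO$ and in particular $\tilde\beta = \lambda + (\lambda + \tilde\beta);$ thus among the three elements $\lambda,$ $\tilde\beta,$ $\lambda+\tilde\beta$ any one equals the sum of the other two. The single tool for (2)--(4) is the ultrametric inequality $\val(x+y)\geq \min\{\val(x),\val(y)\},$ with equality whenever $\val(x)\neq\val(y),$ the only extra care being that $\bol k = \min\{\val(\tilde\beta),\ell'\}$ and $\bol j_\lambda = \min\{\val(\lambda+\tilde\beta),\ell'\}$ are truncated at $\ell'$ while $\bol i_\lambda = \val(\lambda)$ is not.

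For (1) I would simply read off the two cases in the definition of $\bol s.$ If $\alpha$ is a square modulo $\pi^{\bol k}$ then $\bol s = 2\lfloor \bol k/2\rfloor + 1 \leq \bol k + 1$ because $2\lfloor \bol k/2 \rfloor \leq \bol k;$ in the remaining case $\bol s = m$ with $m < \bol k,$ so a fortiori $\bol s < \bol k + 1.$ It is worth recording here, since it reappears later, that because squaring is a bijection of $\mathbb F_q$ a power series in $\pi$ is a square exactly when all its odd-index coefficients vanish; this is what makes the dichotomy defining $\bol s$ exhaustive and forces $\bol s$ to be odd.

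For (4) I would apply the inequality to each of the three ways of writing one element as a sum of the other two: $\val(\lambda)\geq\min\{\val(\tilde\beta),\val(\lambda+\tilde\beta)\}$ from $\lambda=\tilde\beta+(\lambda+\tilde\beta),$ then $\val(\lambda+\tilde\beta)\geq\min\{\val(\lambda),\val(\tilde\beta)\}$ directly, and $\val(\tilde\beta)\geq\min\{\val(\lambda),\val(\lambda+\tilde\beta)\}$ from $\tilde\beta=\lambda+(\lambda+\tilde\beta).$ Bounding $\val(\tilde\beta)\geq\bol k$ and $\val(\lambda+\tilde\beta)\geq\bol j_\lambda,$ the first line already gives $\bol i_\lambda \geq \min\{\bol j_\lambda,\bol k\};$ for the other two I would push the truncation $\min\{-,\ell'\}$ inside the minimum, using $\min\{\val(\tilde\beta),\ell'\}=\bol k$ and $\min\{\val(\lambda+\tilde\beta),\ell'\}=\bol j_\lambda,$ to get $\bol j_\lambda\geq\min\{\bol i_\lambda,\bol k\}$ and $\bol k\geq\min\{\bol i_\lambda,\bol j_\lambda\}.$

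For (2) and (3) the point is to upgrade the inequality to an equality, so I would split on whether $\val(\tilde\beta)<\ell'$ (i.e.\ $\bol k = \val(\tilde\beta)$) or $\val(\tilde\beta)\geq \ell'$ (i.e.\ $\bol k = \ell'$). In (2) the hypothesis $\bol i_\lambda\neq\bol k$ forces $\val(\lambda)\neq\val(\tilde\beta)$ in each branch (treating $\bol i_\lambda<\ell'$ and $\bol i_\lambda>\ell'$ separately when $\bol k=\ell'$), so $\val(\lambda+\tilde\beta)=\min\{\val(\lambda),\val(\tilde\beta)\}$ and truncating yields $\bol j_\lambda=\min\{\bol i_\lambda,\bol k\}.$ In (3) the hypotheses $\bol i_\lambda\leq\ell'$ and $\bol j_\lambda=\ell'$ give $\val(\lambda+\tilde\beta)\geq\ell'\geq\val(\lambda),$ whence $\val(\tilde\beta)=\min\{\val(\lambda),\val(\lambda+\tilde\beta)\}=\val(\lambda)$ when $\bol i_\lambda<\ell'$ and $\val(\tilde\beta)\geq\ell'$ when $\bol i_\lambda=\ell',$ so in both cases $\bol k = \min\{\val(\tilde\beta),\ell'\}=\bol i_\lambda.$ I do not anticipate a genuine obstacle: there is no deep idea, and essentially the only thing that can go wrong is mishandling the cap at $\ell',$ so the main work is bookkeeping the boundary cases $\val(\lambda)=\ell'$ and $\val(\tilde\beta)=\ell'$ consistently.
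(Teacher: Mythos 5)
Your argument is correct and coincides with what the paper leaves implicit: the paper's entire proof is the one line that the result ``follows directly from the definition of $\bol i_\lambda,$ $\bol j_\lambda,$ $\bol k$ and $\bol s$,'' and your ultrametric-plus-truncation bookkeeping is exactly that unwinding. One small inaccuracy worth fixing: in part (2), in the branch $\bol k=\ell'$ with $\bol i_\lambda>\ell',$ the hypothesis $\bol i_\lambda\neq\bol k$ does \emph{not} force $\val(\lambda)\neq\val(\tilde\beta)$ (take $\tilde\beta=\lambda$ with $\val(\lambda)>\ell',$ so $\lambda+\tilde\beta=0$ in characteristic two), but the conclusion survives there because the inequality $\val(\lambda+\tilde\beta)\geq\min\{\val(\lambda),\val(\tilde\beta)\}\geq\ell'$ already yields $\bol j_\lambda=\ell'=\min\{\bol i_\lambda,\bol k\}$ after truncation, with no appeal to equality in the ultrametric inequality.
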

\begin{proof}
	The result follows directly from the definition of $\bol i_\lambda,$ $ \bol j_\lambda,$ $ \bol k$ and $ \bol s.$ 
\end{proof}

Define $\epsilon$ as follows:
\[
\epsilon =  \begin{cases}  1 &  \mathrm{if} \,\, r = \mathrm{even}, \\ 0 &  \mathrm{if} \,\, r = \mathrm{odd}. \end{cases}
\]
In other words $ r-1=2\ell' -\epsilon.$  The  next three results are quite useful and their proofs are given in Section~\ref{sec:proof of thm:simplification of extension conditions}. 

\begin{proposition}\label{prop:possible-valuations-of-lambda}
Let $\lambda \in  \mathrm{h}_{\tilde{A}}^\ell \setminus \pi^{\ell'}\cO_r $ such that $2\bol j_\lambda + \bol i_\lambda = 2 \ell' +\bol  s - \epsilon .$ 
	\begin{enumerate}
		\item If  $ 2\ell' +\bol  s -\epsilon< 3\bol k$ then we must have $\bol i_\lambda = \bol j_\lambda =\frac{2\ell' +\bol  s -\epsilon}{3}.$ 
		\item If $ 2\ell' +\bol  s -\epsilon \geq  3\bol k$ then $\bol i_\lambda \in \{ \bol k,\, 2\ell' + \bol s -\epsilon-2\bol k  \}$ and  $\bol j_\lambda =\begin{cases}
		\frac{2\ell' +\bol  s -\epsilon- \bol k}{2}&    \mathrm{for}\,\, \bol i_\lambda= \bol k,\\
		\bol k &  \mathrm{for}\,\,  \bol i_\lambda \neq \bol k.
		\end{cases} $
		\item   $\bol i_\lambda \geq \mathrm{min} \{\bol  k,	\frac{2\ell' +\bol  s -\epsilon}{3} \} .$  
		\item If $ 2\bol k-\bol s <  \ell$ then we must have $\bol i_\lambda =\bol k.$
		\item $ \mathrm{max} \{ \ell-\bol i_\lambda, \ell-\bol k, \lceil (\ell-\bol s) /2 \rceil \}=\begin{cases}
		\lceil (\ell-\bol s) /2 \rceil & \mathrm{for}\,\, 2 \bol k -\bol s \geq \ell,\\
		\ell-\bol k&  \mathrm{for}\,\, 2 \bol k -\bol s < \ell.
		\end{cases}$
		\item  If $\bol s < \bol k,$ then $	\bol j_\lambda -\bol s \geq   \mathrm{max} \{ \ell-\bol i_\lambda, \ell-\bol k, \lceil (\ell-\bol s) /2 \rceil \},$ i.e. $\delta\geq 0.$

	\end{enumerate}

\end{proposition}

The following corollary of this result will be used in
Section~\ref{sec:proof of thm:quotient_abelian}.
\begin{corollary}
\label{cor:trivial action}
Assume $2\bol k -\bol s \geq \ell.$
Let $\lambda \in  \mathrm{h}_{\tilde{A}}^\ell \setminus \pi^{\ell'}\cO_r $ such that $2\bol j_\lambda + \bol i_\lambda = 2 \ell' +\bol  s - \epsilon .$
For $x I + y \tilde A \in C_{\GL_2(\cO_r)}(\tilde{A})$ such that 
$\det(x I + y \tilde A) = 1 \,\, \mathrm{mod}\, (\pi^\ell ),$ we have 
$\lambda y = 0 \,\, \mathrm{mod}\, (\pi^\ell ).$ 
\end{corollary}

 Let $\cO_{r}^2 =\{ x^2 \mid x \in \cO_r\}.$  Note that from Corollary~\ref{cor:E_tilde A remark}, we have $\mathrm{h}_{\tilde{A}}^\ell \cap  \pi^{\ell'}\cO_r \subseteq E_{\tilde{A}}.$ 
For $\lambda \in  \mathrm{h}_{\tilde{A}}^\ell \setminus \pi^{\ell'}\cO_r ,$ the following  result discusses the conditions on $\lambda$ such that $ \lambda \in E_{\tilde{A}}.$
\begin{thm} 
	\label{thm:simplification of extension conditions} 
	
	For $\lambda \in  \mathrm{h}_{\tilde{A}}^\ell \setminus \pi^{\ell'}\cO_r ,$ $\lambda \in E_{\tilde{A}} $ if and only if the following conditions holds: 
	\begin{enumerate}[(I)]
		\item $ \lambda \in \pi^{\ell - \ell'}\cO_{r}^2 \,\, \mathrm{mod}\,  (\pi^{\ell'}).$
		\item $2\bol j_\lambda + \bol i_\lambda = 2 \ell' +\bol  s - \epsilon .$ 
		\item For  $ \bol j_\lambda < \ell',$ $\bol s < \bol k$ and $\delta_\lambda \geq 0,$
		we must have 
		$
		\xi u_1^2 u_2^2 =  u_1 w_2^2 \,\, \mathrm{mod}\, (\pi^{2 \delta_\lambda +1}), 
		$ 
		where $u_1,u_2 \in \cO_{r}^\times$ such that $\lambda = \pi^{\bol i_\lambda} u_1$
		and $ \lambda + \tilde{\beta} = \pi^{\bol j_\lambda} u_2.$
	\end{enumerate}
\end{thm} 
The following result gives very important information regarding $H_{\tilde A}(\chi)$. This is useful in the section regarding the representation growth of $\SL_2(\cO)$. 
\begin{proposition} 
	\label{prop:main-theorem}
	Let $\chi$ be an extension of $\psi_{[A]}$ to  $C_S^\ell(\tilde{A}).$ Recall that  
	$H_{\tilde{A}}(\chi) = \{ g \in  C_S^\ell(\tilde{A}) \mathrm{H}_{\tilde{A}}^{\ell} \mid \chi^g = \chi  \}. $
	For  $r \geq \mathbf R_\cO,$ 
\[
	|H_{\tilde{A}}(\chi)|/|C_S^\ell(\tilde{A})| \leq |\cO/\wp|^3.
	\]
	
\end{proposition} 
\begin{proof}
	For $\chi \in \mathcal{E}_{\tilde{A}},$ 
	let $M_\chi=\mathrm{H}_{\tilde{A}}^{\ell}\cap H_{\tilde A}(\chi)$ and $m_\chi=\{\lambda \in  \mathrm{h}_{\tilde{A}}^{\ell} \mid e_\lambda \in M_\chi \}.$
	Then $m_\chi$ is an abelian subgroup of $\mathrm{h}_{\tilde{A}}^{\ell}$ such that $\pi^\ell\cO_r  \subseteq m_\chi.$ This implies
	$$\frac{ |H_{\tilde{A}}(\chi)|}{|C_S^\ell(\tilde{A})|}=\frac{|m_\chi|}{|\pi^{\ell} \cO_r |}.$$
	
	For $\lambda \in m_\chi,$ $\chi$ extends to $C_S^\ell(\tilde{A})\langle e_\lambda\rangle.$ This combined with the definition of $E_{\tilde{A}} $ gives 
	$m_\chi\subseteq  E_{\tilde A} .$

	For  $\cO \in \dvrtwozero,$ we have $\mathbf R_{\cO}=4\ee.$ From Theorem~\ref{S_A-in-number}, observe that $E_{\tilde A} \subseteq \pi^{\ell-1}\cO_r .$ Thus $ m_\chi \subseteq \pi^{\ell-1}\cO_r $
	and hence
	$|m_\chi|/|\pi^\ell\cO_r |\leq | \pi^{\ell-1}\cO_r |/|\pi^\ell\cO_r |=q.$ Hence  the result follows in this case.

	For $\cO \in \dvrtwoplus,$ let us consider the following set of valuations: 
	\[
	\mathrm{Val}( E_{\tilde A}\setminus \pi^{\ell'}\cO_r ) = \{\bol i_\lambda  \mid \lambda \in  E_{\tilde A}\setminus \pi^{\ell'}\cO_r  \}. 
	\]
	From Theorem~\ref{thm:simplification of extension conditions} and Proposition~\ref{prop:possible-valuations-of-lambda} (1)-(2),  
	we obtain $|\mathrm{Val}( E_{\tilde A}\setminus \pi^{\ell'}\cO_r )| \leq 2.$
	Let $\mathrm{Val}( E_{\tilde A}\setminus \pi^{\ell'}\cO_r )\subseteq\{v_1,v_2\}$ for some $v_1<v_2<\ell'.$  Then we must have 
	\begin{equation}\label{M-eqn}
		E_{\tilde A} \cup \pi^{\ell'}\cO_r \subseteq \pi^{v_1}\cO_r^\times \cup \pi^{v_2}\cO_r^\times \cup \pi^{\ell'}\cO_r\, .
	\end{equation}
	
	Let  $m^{\prime}_\chi :=m_\chi \cap  \pi^{\ell'}\cO_r $ and $m^{\prime \prime}_\chi :=m_\chi \cap  \pi^{v_2}\cO_r .$ Then $\pi^\ell\cO_r \subseteq m^{\prime}_\chi\subseteq m^{\prime  \prime}_\chi \subseteq m_\chi\subseteq  E_{\tilde A}.$
	Note that  $[m_\chi^{\prime} :\pi^{\ell} \cO_r ]\leq[\pi^{\ell'}\cO_r  :\pi^{\ell} \cO_r ]\leq q.$
	We use that $m^{\prime  \prime}_\chi$ is a group and (\ref{M-eqn}) to observe that 
	if any two elements of $m^{\prime  \prime}_\chi$  are equal modulo $\pi^{v_2 +1}\cO_r $ then these must be equal modulo $\pi^{\ell'}\cO_r .$ Therefore $[m_\chi^{\prime \prime} : m_\chi^{\prime} ] \leq q.$ 
	By similar argument we  get  $[m_\chi : m_\chi^{\prime \prime} ] \leq q.$ Therefore 
	$$\frac{|m_\chi|}{|\pi^{\ell} \cO_r |}=[m_\chi : m_\chi^{\prime \prime}]\times [m_\chi^{\prime \prime} : m_\chi^{\prime} ]\times [m_\chi^{\prime} :\pi^{\ell} \cO_r ] \leq q^3. $$
\end{proof} 
Our next result plays a crucial role in Section~\ref{sec:odd-level-reps}. 

\begin{thm} \label{thm:quotient_abelian}
	Let $r =2\ell $, $r\geq \mathbf R_{\cO}$ and $A \in  M_2(\cO_{\ell})$ be cyclic. 
	\begin{enumerate}
		\item The set $E_{\tilde A}$ forms an additive subgroup of $\mathrm{h}_{\tilde{A}}^{\ell}$
		such that  $[E_{\tilde A}: \pi^{\ell}\cO_{2\ell}]\leq 4.$
		\item  Let $\mathbb E_{\tilde A}$ be the subgroup of $\mathrm{H}_{\tilde{A}}^{\ell}$ corresponding to $E_{\tilde A}.$ Then  there exists an extension of $\psi_{[A]} $ to $C_{\SL_2(\cO_{2\ell})} (\psi_{A})\mathbb E_{\tilde A}.$
		\item Either the group $C_{\SL_2(\cO_{2\ell})} (\psi_{A})\mathbb E_{\tilde A}/K^\ell$ is abelian or it is a semi-direct product of an abelian group with a group of order two. 
		\item If the group $C_{\SL_2(\cO_{2\ell})} (\psi_{A})\mathbb E_{\tilde A}/K^\ell$ is non-abelian, then $[E_{\tilde A}: \pi^{\ell}\cO_{2\ell}]= 2.$
	\end{enumerate}
\end{thm}
See Section~\ref{sec:proof of thm:quotient_abelian} for its proof.

\section{Representation growth of $\SL_2(\cO)$}  
\label{sec:rep-growth}
In this section, we results regarding the representation growth of $\SL_2(\cO)$. In particular, we prove Theorems~ \ref{thm:abs-of-convergence} and \ref{thm: Na-and-dimension-asymptomatics}. 
\subsection{Bounds on the numbers and dimensions of representations of $\SL_2(\cO_r)$ }
\label{sec:proof-of-main-thms}
In this section, we include a proof of Theorem~\ref{thm: Na-and-dimension-asymptomatics}. As mentioned before, this result plays an essential role in proving Theorem~\ref{thm:abs-of-convergence}. 

\begin{lemma}\label{lem:number of constituents}
	Let $\phi$ be a representation of a finite group $G$ such that $\phi \cong \phi_1^{m_1}\oplus \phi_2^{m_2} \oplus \cdots \oplus \phi_t^{m_t},$ where $\phi_i\,$ for $1\leq i\leq t$ are inequivalent irreducible representations of $G$ and $m_1 \leq m_2 \leq \cdots \leq m_t$. 
	Let $d=\min\{ \dim (\phi_i)\mid 1\leq i\leq t\}$ and $d'=\max\{ \dim (\phi_i)\mid 1\leq i\leq t\}.$ 
	Then the following inequality holds: 
	$$ \frac{\dim(\phi)}{m_t d'} \leq t \leq \frac{\dim(\phi)}{m_1 d}.$$
\end{lemma}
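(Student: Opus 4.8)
The plan is to exploit the additivity of dimension over a direct sum decomposition together with uniform bounds on each summand. Since the $\phi_i$ are inequivalent irreducible representations, the decomposition $\phi \cong \bigoplus_{i=1}^t \phi_i^{m_i}$ gives
\[
\dim(\phi) = \sum_{i=1}^t m_i \dim(\phi_i).
\]
This identity is the only structural input I would use; everything else is a term-by-term comparison.

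First I would record the pointwise bounds coming directly from the definitions of $d_0, d_1, \mu_0, \mu_1$: for every index $i$ we have $d_0 \leq \dim(\phi_i) \leq d_1$ and $\mu_0 \leq m_i \leq \mu_1$. Multiplying these inequalities yields
\[
\mu_0 d_0 \leq m_i \dim(\phi_i) \leq \mu_1 d_1 \quad \text{for all } 1 \leq i \leq t.
\]
Summing over the $t$ summands and substituting the dimension identity gives the two-sided estimate $t\,\mu_0 d_0 \leq \dim(\phi) \leq t\,\mu_1 d_1$.

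Finally I would rearrange each half of this estimate. The left inequality $t\,\mu_0 d_0 \leq \dim(\phi)$ yields $t \leq \dim(\phi)/(\mu_0 d_0)$, while the right inequality $\dim(\phi) \leq t\,\mu_1 d_1$ yields $t \geq \dim(\phi)/(\mu_1 d_1)$, which together are exactly the claimed bound $\dim(\phi)/(\mu_1 d_1) \leq t \leq \dim(\phi)/(\mu_0 d_0)$. There is no real obstacle here: the statement is purely a bounding argument, and the only point requiring any care is ensuring that $d_0, \mu_0$ are nonzero (guaranteed since each $\phi_i$ is a genuine irreducible representation appearing with positive multiplicity) so that the divisions are legitimate.
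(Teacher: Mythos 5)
Your argument is correct: the additivity $\dim(\phi)=\sum_{i=1}^t m_i\dim(\phi_i)$, the term-by-term bounds $\mu_0 d_0\leq m_i\dim(\phi_i)\leq \mu_1 d_1$, and the rearrangement give exactly the stated inequalities. The paper treats this lemma as elementary and gives no proof, and your argument is precisely the intended one, so there is nothing further to compare.
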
 
\begin{proof}
	By definition of $d$ and $d'$, we obtain 
	$m_1t d \leq \dim(\phi)$ and $  m_tt d'\geq \dim(\phi).$
\end{proof}

\begin{proof}[\bf{Proof of Theorem~\ref{thm: Na-and-dimension-asymptomatics}}] 
	Let $A \in M_2(\cO_{\ell'})$ be cyclic and $\rho \in \mathrm{Irr}(\SL_2(\cO_{r}) \mid \psi_{[A]} ).$ By Clifford Theory, there exists a representation $\phi \in \mathrm{Irr}(C_{\SL_2(\cO_{r})} (\psi_{[A]})\mid \psi_{[A]} )$ such that $\rho \cong \mathrm{Ind}_{C_{\SL_2(\cO_{r})} (\psi_{[A]}) }^{\SL_2(\cO_{r})}\phi.$ Recall that 
	\[
	K^\ell \subseteq  C_S^\ell(\tilde{A}) \subseteq C_S^\ell(\tilde{A})\mathrm{H}_{\tilde{A}}^{\ell} \subseteq C_{\SL_2(\cO_{r})} (\psi_{[A]})
	\]
	and every representation of  $\mathrm{Irr}(C_S^\ell(\tilde{A}) \mid \psi_{[A]} )$ has dimension one. Therefore 
	\begin{eqnarray}
		\label{bound-1} 
		 \dim(\rho) \leq  \frac{|\SL_2(\cO_r)|}{ |C_S^\ell(\tilde{A})|}.
		\end{eqnarray} 
		Let  $\chi \in \mathrm{Irr}(C_S^\ell(\tilde{A}) \mid \psi_{[A]} )$ be such that $\langle\mathrm{Ind}_{C_S^\ell(\tilde{A})}^{C_{\SL_2(\cO_{r})} (\psi_{[A]})}\chi , \, \phi\rangle\neq 0.$ By definition of $H_{\tilde A}(\chi)$, we have $C_S^\ell(\tilde{A}) \subseteq H_{\tilde A}(\chi) \subseteq C_S^\ell(\tilde{A})\mathrm{H}_{\tilde{A}}^{\ell}$ and any constituent of $\mathrm{Ind}_{C_S^\ell(\tilde{A})}^{C_S^\ell(\tilde{A})\mathrm{H}_{\tilde{A}}^{\ell}}\chi$ has dimension 
	greater than or equal to $\frac{|C_S^\ell(\tilde{A})\mathrm{H}_{\tilde{A}}^{\ell}|}{|H_{\tilde A}(\chi)|}.$
	Hence
	\begin{equation*}
	\dim(\rho) \geq \frac{|C_S^\ell(\tilde{A})\mathrm{H}_{\tilde{A}}^{\ell}|}{|H_{\tilde A}(\chi)|} \times  \frac{|\SL_2(\cO_r)|}{ |C_{\SL_2(\cO_{r})} (\psi_{[A]})|}.
	\end{equation*}
	We now observe that $|C_S^\ell(\tilde{A})\mathrm{H}_{\tilde{A}}^{\ell}|=\frac{|C_S^\ell(\tilde{A})|\times  |\mathrm{h}_{\tilde{A}}^{\ell}|}{|\pi^\ell\cO_r |}$ and 
	$ |C_{\SL_2(\cO_{r})} (\psi_{[A]})|= \frac{|C_{\SL_2(\cO_{r})} (\psi_{A})|\times  |\mathrm{h}_{\tilde{A}}^{\ell'}|}{|\pi^{\ell'}\cO_r |}
	.$
	Therefore 
\begin{eqnarray*} 
\dim(\rho) \geq \frac{|C_S^\ell(\tilde{A})|}{|H_{\tilde A}(\chi)|} \times  \frac{|\SL_2(\cO_r)|}{ |C_{\SL_2(\cO_{r})} (\psi_{A})|}\times\frac{ |\mathrm{h}_{\tilde{A}}^{\ell}|}{ |\mathrm{h}_{\tilde{A}}^{\ell'}|}\times\frac{|\pi^{\ell'}\cO_r |}{|\pi^{\ell}\cO_r |}. 
\end{eqnarray*} 
This, along with Proposition~\ref{prop:main-theorem} and Corollary~\ref{h_A/h_a^l cardinality}(1), gives
\begin{eqnarray}
	\label{bound-2}  
\dim(\rho) \geq \frac{1}{ q^4}\times  \frac{|\SL_2(\cO_r)|}{ |C_{\SL_2(\cO_{r})} (\psi_{A})|}.
\end{eqnarray} 
From (\ref{bound-1}), (\ref{bound-2}), $|C_{\SL_2(\cO_{r})} (\psi_{A})| \asymp |C_{\SL_2(\cO_{\ell'})}(A)| \times q^{3\ell}$ 
and $|(C_S^\ell(\tilde{A}))| \asymp |C_{\SL_2(\cO_{r})} (\psi_{A})|,$ we obtain (2) of our result.  
We now prove (1). Denote  $\mathrm{Ind}_{K^\ell}^{\SL_2(\cO_{r})}\psi_{[A]}$ by $\varrho_A$. Let $\varrho_A \cong \rho_1^{m_1} \oplus \rho_2^{m_2} \oplus \cdots \rho_k^{m_k}$
such that $\rho_i \ncong \rho_j$ for $i \neq j$ and $m_1 \leq m_2 \leq \cdots m_k$. Note that $\mathrm{Irr}(\SL_2(\cO_{r}) \mid \psi_{[A]} )=\{\rho_i \mid 1\leq i \leq k \}.$
Let  $d=\min\{ \dim (\rho_i)\mid 1\leq i \leq k \}$ and  $d'=\max\{ \dim (\rho_i)\mid 1\leq i \leq k \}.$ 
By Lemma~\ref{lem:number of constituents}, 
	\begin{equation}\label{eqn:dim-num}
	\frac{\dim(\varrho_A)}{ m_k d'}	\leq |\mathrm{Irr}(\SL_2(\cO_{r}) \mid \psi_{[A]} )|\leq \frac{\dim (\varrho_A)}{ m_1 d}.
	\end{equation}
By Clifford Theory, every $\rho_i \in \mathrm{Irr}(\SL_2(\cO_{r}) \mid \psi_{[A]} )$ satisfies 
$$\langle\mathrm{Ind}_{K^\ell}^{\SL_2(\cO_{r})} \psi_{[A]}, 
\, \rho_i\rangle=\langle\psi_{[A]}, \rho_i|_{K^{\ell}}\rangle=\frac{\dim(\rho_i)}{|\mathrm{Orb}(\psi_{[A]})|},$$
where $\mathrm{Orb}(\psi_{[A]})$ is the orbit of $\psi_{[A]}$ under $\SL_2(\cO_{ r})$ action. For $1\leq i \leq k,$ we obtain 
	$$ \frac{d}{|\mathrm{Orb}(\psi_{[A]})|}
\leq
 \langle\mathrm{Ind}_{K^\ell}^{\SL_2(\cO_{r})} \psi_{[A]}, 
	\, \rho_i\rangle
\leq \frac{d'}{|\mathrm{Orb}(\psi_{[A]})|}.$$
Hence 
$ \frac{d}{|\mathrm{Orb}(\psi_{[A]})|}
\leq m_1 \leq m_k 
\leq \frac{d'}{|\mathrm{Orb}(\psi_{[A]})|}.$
This, along with (\ref{eqn:dim-num}), implies
	\begin{equation}\label{eqn:dim-num-1}
\frac{\dim (\varrho_A)}{d'^2\times |\mathrm{Orb}(\psi_{[A]})|}	\leq 	|\mathrm{Irr}(\SL_2(\cO_{r}) \mid \psi_{[A]} )| \leq \frac{\dim (\varrho_A)}{d^2\times |\mathrm{Orb}(\psi_{[A]})|}	 .
 \end{equation}
 We note that
  	$$|\mathrm{Orb}(\psi_{[A]})|=\frac{|\SL_2(\cO_{r})|}{|C_{\SL_2(\cO_{r})} (\psi_{[A]})|}=\frac{|\SL_2(\cO_{r})|\times |\pi^{\ell'}\cO_r | }{|C_{\SL_2(\cO_{r})} (\psi_{A})|\times |\mathrm{H}_{\tilde{A}}^{\ell'}|}$$
and $\dim (\varrho_A)=\frac{|\SL_2(\cO_{r})|}{|K^\ell|}$. From $(\ref{bound-1})$ and $(\ref{bound-2}),$  we have 
 $d' \leq \frac{|\SL_2(\cO_r)|}{ |C_S^\ell(\tilde{A})|}$ and $	\frac{1}{ q^4}\times  \frac{|\SL_2(\cO_r)|}{ |C_{\SL_2(\cO_{r})} (\psi_{A})|}\leq d.$  
  By substituting these 
   in (\ref{eqn:dim-num-1}), we obtain 
	\begin{eqnarray}
		\label{bound-3} 
		 \frac{|(C_S^\ell(\tilde{A}))|^2\times q^\ell}
		{|C_{\SL_2(\cO_{r})} (\psi_{A})|\times |\mathrm{H}_{\tilde{A}}^{\ell'}|\times |K^\ell|}\leq |\mathrm{Irr}(\SL_2(\cO_{r}) \mid \psi_{[A]} )| 
\leq  \frac{ |C_{\SL_2(\cO_{r})} (\psi_{A})|\times q^{\ell+8}}
		{|\mathrm{H}_{\tilde{A}}^{\ell'}|\times |K^\ell|}.
	\end{eqnarray}
Note that  $|(C_S^\ell(\tilde{A}))| \asymp |C_{\SL_2(\cO_{r})} (\psi_{A})|.$
 Therefore $|\mathrm{Irr}(\SL_2(\cO_{r}) \mid \psi_{[A]} )| \asymp \frac{ |C_{\SL_2(\cO_{r})} (\psi_{A})|\times q^{\ell}}
		{|\mathrm{H}_{\tilde{A}}^{\ell'}|\times |K^\ell|}.$ The result now follows from $|C_{\SL_2(\cO_{r})} (\psi_{A})| \asymp |C_{\SL_2(\cO_{\ell'})}(A)| \times q^{3\ell}$ and $|K^\ell|\asymp q^{3\ell}.$  
\end{proof}

\subsection{The abscissa of convergence of $\zeta_{\SL_2(\cO)}(s)$} 
\label{sec:abs-of-conv}
In this section, we prove Theorem~\ref{thm:abs-of-convergence}. 
Recall that the representation zeta function $\zeta_{\SL_2(\cO)}(s)$ of $\SL_2(\cO)$ is defined as  
\[
\zeta_{\SL_2(\cO)}(s) = \sum_{\rho \in \mathrm{Irr}(\SL_2(\cO)) } \frac{1}{\dim(\rho)^s}.
\]
Let $\mathrm{Irr}^\mathfrak{pr}(\SL_2(\cO_r))$ be the set of primitive (i.e. of level $r-1$) irreducible representations of $\SL_2(\cO_r)$ for $r \geq 2$.  Let $\zeta_{S_r}(s) =  \sum_{\rho \in \mathrm{Irr}^\mathfrak{pr}(\SL_2(\cO_r)) } \frac{1}{\dim(\rho)^s}.$ Then
\[ 
\zeta_{\SL_2(\cO)}(s) = \zeta_{\SL_2(\cO_1)}(s)+\sum_{r=2}^\infty \zeta_{S_r}(s). 
\]
The description of $ \zeta_{\SL_2(\cO_1)}(s)$ is already known in the literature, see~Adams~\cite{adams}, Zelevinsii-Narkunskaja~\cite{MR0369497}. Throughout this section, we will assume that $r \geq 2.$ 
Recall that a matrix $A=\mat{0 }{a^{-1}\alpha}{a}{\beta} \in M_2(\cO_{j})$ for $j \geq 1$ is said to be of  $\mathrm{Type}(\SS)$, $\mathrm{Type}(\IR)$,  $\mathrm{Type}(\NS)$ respectively if and only if 
$\bar{A}$ is split semisimple$(\SS)$, Irreducible$(\IR)$, Split non-semisimple$(\NS)$ respectively. 
We use this to partition the set $\mathrm{Irr}^\mathfrak{pr}(\SL_2(\cO_r))$ into three subsets as follows. Define $\zeta_{S_r}^\SS(s),$ $\zeta_{S_r}^\IR(s)$ and $\zeta_{S_r}^\NS(s)$ to be the corresponding zeta function of primitive irreducible representations of $\SL_2(\cO_r)$ of each type. Any cyclic  $\bar{A} \in M_2(\mathbb F_q)$ is either $\SS$, $\IR$, or $\NS$. Hence 
\[ 
\zeta_{\SL_2(\cO)}(s) = \zeta_{\SL_2(\cO_1)}(s) + \sum_{r=2}^\infty (\zeta_{S_r}^\SS(s) + \zeta_{S_r}^\IR(s)  + \zeta_{S_r}^\NS(s) ). 
\]
Recall that $\Sigma_\cO^\SS(\Sigma_\cO^\IR$ and $\Sigma_\cO^\NS)$ denote the  orbits of  $\{\psi_{[A]} \in \widehat{K^\ell} \}$ for  $A$  of Type($\SS$) (Type($\IR$), Type($\NS$)) under the conjugation action of $\SL_2(\cO_r)$.  The asymptotic behaviour of these sets' cardinalities were obtained in Theorem~\ref{thm:number-of-orbits-split semisimple} and Proposition~\ref{prop:SNS-orbits-Char-2}. These results are included under the heading ``$\#$orbits $\asymp$" in Table~\ref{table:number-dimension}.  For cyclic $A \in M_2(\cO_{\ell'})$,  Theorem~\ref{thm: Na-and-dimension-asymptomatics}, Theorem~\ref{thm:C-S-L asymp to..} and  Corollary~\ref{h_A/h_a^l cardinality} describe asymptotically both $n_A =  | \mathrm{Irr}(\SL_2(\cO_r) \mid \psi_{[A]}) |$ and $d_A = \dim(\rho)$ for $\rho \in \mathrm{Irr}(\SL_2(\cO_r) \mid \psi_{[A]})$. These results are  listed under the heading ``dimension $\asymp$" and ``$n_A \asymp$" in Table~\ref{table:number-dimension}.
\begin{table}	
	\begin{tabular}{|c|c|c|c|c|c|}
		\hline
		$\Char(\cO) $&Type &dimension $\asymp$ &$n_A \, \asymp $ & $ \#$Orbits $\asymp$&$n_A \times \#$Orbits $\asymp$\\
		\hline
		$0,2$&$(\SS)$ and $(\IR)$ &$q^r$&$q^{\frac{r}{2}}$&$q^{\frac{r}{2}}$&$q^r$\\
		\hline
		$0$&$(\NS)$&$q^r$&$q^{\frac{r}{2}}$&$q^{\frac{r}{2}}$&$q^r$\\
		\hline
		$2$&$(\NS)_{\bol{k}, \bol{s}}$&$q^{r-\lfloor\frac{\bol s}{2}\rfloor}$&$\frac{q^{r+\lfloor\frac{\bol s}{2}\rfloor}}{|\mathrm{h}_{\tilde{A}}^{\ell'}|}$&$\Sigma_\cO^\NS(\bol k,\bol s)$&$q^{r-\bol k +\lfloor\frac{\bol s}{2}\rfloor}$\\
		\hline
	\end{tabular}
	\caption{Dimensions and  numbers of the representations of each Type}
	\label{table:number-dimension}
\end{table}
From Table~\ref{table:number-dimension}, for $r \geq \mathbf{R}_\cO$, we have 
\begin{equation}
	\label{zeta-1 and 2}
	\zeta^\SS_{S_r}(s)+\zeta^\IR_{S_r}(s) \asymp  q^{r(1-s)}.
\end{equation}
The sum $\sum_{r=\mathbf{R}_\cO }^{\infty}q^{r(1-s)}$ diverges for all $s\leq 1.$ So we assume that $s >1.$
For $\cO \in \dvrtwozero,$ we further observe that $\zeta^\NS_{S_r}(s) \asymp  q^{r(1-s)}.$ This, along with $(\ref{zeta-1 and 2}),$ gives
$$(\zeta^{\SS}_{S_r}(s)+\zeta^{\IR}_{S_r}(s)+\zeta^{\NS}_{S_r}(s))\asymp 2q^{r(1 - s)}\asymp q^{r(1 - s)}.$$
Hence the abscissa of convergence of $\zeta_{\SL_2(\cO)}(s)$ for $\cO \in \dvrtwozero$ is  $1.$ Now onwards, we assume that $\cO \in \dvrtwoplus.$
Denote $\lfloor\frac{s}{2}\rfloor$ by $\delta.$ Again from Table~\ref{table:number-dimension},
\begin{eqnarray*}
	\zeta^\NS_{S_r}(s) &\asymp & \sum_{k=1}^{\ell'} \sum_{\delta=0}^{\lfloor \frac{k}{2} \rfloor} q^{r-k+ \delta -s (r-\delta)}=q^{r-sr} \sum_{k=1}^{\ell'} \sum_{\delta=0}^{\lfloor \frac{k}{2} \rfloor} q^{ \delta(1 +s)-k }.
\end{eqnarray*}	
Note that for $s> 1$, we have 
\begin{equation*}
	\sum_{\delta=0}^{\lfloor \frac{k}{2} \rfloor}  q^{\delta (1+s)}=\frac{q^{(1+s)(\lfloor \frac{k}{2} \rfloor+1)}-1}{q^{1+s} -1}\asymp q^{(1+s)(\lfloor \frac{k}{2} \rfloor+1)}-1\asymp q^{\frac{(1+s)k}{2}}.
\end{equation*}
Hence
\begin{eqnarray*}
	\zeta^\NS_{S_r}(s)	&\asymp&q^{r-s r}\sum_{k=1}^{\ell'}  
	q^{\frac{(s-1)k}{2} }.
\end{eqnarray*}
For $s >1,$ 
\begin{eqnarray*}
	\sum_{k=1}^{\ell'}  q^{\frac{(s-1)k}{2} }&=&
	\frac{q^{\frac{(s-1)(\ell'+1)}{2} }-q^{\frac{s-1}{2} }}{q^{ \frac{s-1}{2}}-1}\asymp q^{\frac{(s-1)(\ell'+1)}{2} } \asymp q^{\frac{(s-1)r}{4} }.
\end{eqnarray*}
Therefore    
$	\zeta^\NS_{S_r}(s)	\asymp
q^{\frac{3r}{4}(1-s )}
$ for large $r.$
Combining this with  $(\ref{zeta-1 and 2}),$ we obtain
$$(\zeta^\SS_{S_r}(s)+\zeta^\IR_{S_r}(s)+\zeta^\NS_{S_r}(s))\asymp q^{r(1 - s)}+q^{\frac{3r}{4}(1- s )}.$$
Observe that $\sum_{r=\mathbf{R}_\cO}^{\infty}( q^{r(1 - s)}+q^{\frac{3r}{4}(1- s )})$ converges for $s>1$ and diverges for $s<1.$
Hence the abscissa of convergence of $\zeta_{\SL_2(\cO)}(s)$ for $\cO \in \dvrtwoplus$ is  $1.$ This completes the proof of Theorem~\ref{thm:abs-of-convergence}.  
\section{Odd level representations of $\SL_2(\cO)$}
\label{sec:odd-level-reps}
In this section, we give a construction of all odd level representations of $\SL_2(\cO).$ Using this, we prove Theorem~\ref{thm:zeta poly not eaqal- gp algebra}. Throughout this section,  we assume $r=2\ell.$ 
\subsection{Construction of odd level representations of $\SL_2(\cO)$}
\label{sec:construction of odd level rep}
In this section, we prove Theorem~\ref{main_theorem-2}. The following  lemma is  elementary and  used in this section as well as in Sections~\ref{sec:proof of thm:condition for extension} and \ref{sec:proof of thm:quotient_abelian}.
\begin{lemma} (Diamond Lemma)
	\label{diamond-lemma} 
	Let $H$ and $K$ be two subgroups of a group $G$ such that $H$ is normal in $G$ and $G = HK.$ Let $\chi_1$ and $\chi_2$ be one-dimensional representations of $H$ and $K$ respectively such that $\chi_1(khk^{-1}) = \chi_1(h)$ for all $h \in H$ and $k \in K,$ and $\chi_1|_{H \cap K } = \chi_2|_{H\cap K}.$ Then there exists a unique one-dimensional representation of $G$ extending both $\chi_1$ and $\chi_2$ simultaneously.  
\end{lemma}
\begin{proof} For proof, see  Lemma~\cite[Lemma~5.4]{MR2684153}. 
\end{proof}
\begin{lemma}\label{lem:induction-irred}
	Let $\mathbb M_A = C_{\SL_2(\cO_{2\ell})} (\psi_{A})\mathbb E_{\tilde A} $.  The representation $\mathrm{Ind}_{\mathbb M_A}^{ C_{\SL_2(\cO_{2\ell})}(\psi_{[A]})}(\rho)$ is irreducible for every $\rho \in\mathrm{Irr}(\mathbb M_A \mid  \psi_{[A]}).$ 
\end{lemma}
\begin{proof}
	The group $C_{\SL_2(\cO_{2\ell})}(\psi_{A})$ is a normal subgroup of  $C_{\SL_2(\cO_{2\ell})}(\psi_{[A]})$ such that the corresponding  quotient group is abelian. Hence $\mathbb M_A$ is a normal subgroup of $C_{\SL_2(\cO_{2\ell})}(\psi_{[A]}).$ 
		Let $\rho \in\mathrm{Irr}(\mathbb M_A\mid  \psi_{[A]}) $ such that $\dim(\rho) = 1$.  From Lemma~\ref{lem:stabilizer-form},  $C_{\SL_2(\cO_{2\ell})}(\psi_{[A]})= C_{\SL_2(\cO_{2\ell})} (\psi_{A}) \mathrm{H}_{\tilde{A}}^{\ell},$ where $\mathrm{H}_{\tilde{A}}^{\ell}$ is an abelian group.  Therefore the stabilizer of $\rho$ in $C_{\SL_2(\cO_{2\ell})}(\psi_{[A]})$ is  $C_{\SL_2(\cO_{2\ell})} (\psi_{A}) N$ for some abelian subgroup $N$ of $ \mathrm{H}_{\tilde{A}}^{\ell}$ and  $\rho$ ( therefore $\psi_{[A]}$) extends to $C_{\SL_2(\cO_{2\ell})} (\psi_{A}) N.$ This combined with the definition of $\mathbb E_{\tilde A}$ implies  
	$N=\mathbb E_{\tilde A}.$ By
	Clifford theory, the induced representation $\mathrm{Ind}_{\mathbb M_A}^{ C_{\SL_2(\cO_{2\ell})}(\psi_{[A]})}(\rho)$ is irreducible. 
	
	Next, let $\rho \in\mathrm{Irr}(\mathbb M_A \mid  \psi_{[A]})$ such that $\dim(\rho) > 1$. This case arises only if the group $\mathbb M_A/K^\ell$ is non-abelian. From Theorem~\ref{thm:quotient_abelian} (4) we have $[E_{\tilde A}: \pi^{\ell}\cO_{2\ell}]= 2.$
	Let $\{0,\lambda_0\}$, with $\lambda_0 \in \mathrm{h}_{\tilde{A}}^\ell \setminus \pi^{\ell}\cO_{2\ell}$, be a set of representatives of $E_{\tilde A}/ \pi^{\ell}\cO_{2\ell}$. Let $\phi \in\mathrm{Irr}(C_{\SL_2(\cO_{2\ell})} (\psi_{A})\mid  \psi_{[A]})  $ be such that $\langle \phi , \rho\mid_{C_{\SL_2(\cO_{2\ell})} (\psi_{A})}\rangle \neq 0.$ The $\dim(\rho)>1$ and $|
	\mathbb M_A/ C_{\SL_2(\cO_{2\ell})} (\psi_{A})|=2$ give $e_{\lambda_0}$ does not stabilize $\phi$ and $\rho \cong\mathrm{Ind}_{C_{\SL_2(\cO_{2\ell})} (\psi_{A})}^{\mathbb M_A}(\phi)$. 
	By the definition of $\mathbb E_{\tilde A},$ the stabilizer of $\phi$ in $C_{\SL_2(\cO_{2\ell})} (\psi_{[A]})$ is contained in $\mathbb M_A$. Hence $C_{C_{\SL_2(\cO_{2\ell})} (\psi_{[A]})} (\phi)= C_{\SL_2(\cO_{2\ell})} (\psi_{A}).$ 
	By Clifford theory, the induced representation
	\[
	\mathrm{Ind}_{C_{\SL_2(\cO_{2\ell})} (\psi_{A})}^{ C_{\SL_2(\cO_{2\ell})}(\psi_{[A]})}(\phi)\cong \mathrm{Ind}_{\mathbb M_A}^{ C_{\SL_2(\cO_{2\ell})}(\psi_{[A]})}(\rho)
	\]
	is irreducible. 
\end{proof}
\begin{proof}[\bf{Proof of Theorem~\ref{main_theorem-2}}]
	By Theorem~\ref{thm:quotient_abelian},the set $\mathbb M_A=C_{\SL_2(\cO_{2\ell})} (\psi_{A})\mathbb E_{\tilde A}$  satisfies (1) and (2)  of Theorem~\ref{main_theorem-2}. 
	Now we prove (3) of Theorem~\ref{main_theorem-2}.  By Lemma~\ref{lem:induction-irred} and Clifford theory, we have 
	$$ \mathrm{Irr}( \SL_2(\cO_{2\ell})\mid  \psi_{[A]})=\{ \mathrm{Ind}_{\mathbb M_A}^{ \SL_2(\cO_{2\ell})}(\rho) \mid \rho \in  \mathrm{Irr}(\mathbb M_A\mid  \psi_{[A]}) \}.$$
Hence to show (3) of Theorem~\ref{main_theorem-2}, it is enough to prove that any $\rho_1 ,\rho_2 \in  \mathrm{Irr}(\mathbb M_A\mid  \psi_{[A]})$ satisfy   $\mathrm{Ind}_{\mathbb M_A}^{ \SL_2(\cO_{2\ell})}(\rho_1)\cong \mathrm{Ind}_{\mathbb M_A}^{ \SL_2(\cO_{2\ell})}(\rho_2)$ if and only if $\rho_1=\rho_2^g$ for some $g \in C_{\SL_2(\cO_{2\ell})}(\psi_{[A]}).$ This follows because of the 
	Clifford theory and the fact that $\mathbb M_A$ is a normal subgroup of $C_{\SL_2(\cO_{2\ell})}(\psi_{[A]}).$ 	
\end{proof}
\subsection{Group algebra of $\SL_2(\cO_{2 \ell})$} 
\label{sec:group-algebras}
This section contains a proof of Theorem~\ref{thm:zeta poly not eaqal- gp algebra}. We first collect a few explicit results regarding the numbers and dimensions of irreducible representations of $\SL_2(\cO_{2 \ell})$. 
\begin{lemma}\label{lem:high dim implies split semisimple}
Let $A \in M_2(\cO_\ell)$ be a cyclic matrix. Each $\rho \in \mathrm{Irr}( \SL_2(\cO_{2 \ell}) \mid \psi_{[A]})$ satisfies  $\dim(\rho) \leq (q+1) q^{2 \ell-1}.$ Further $\dim(\rho) = (q+1) q^{2 \ell-1}$ implies $\bar{A}$ is split semisimple. 
\end{lemma}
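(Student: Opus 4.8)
The plan is to bound $\dim(\rho)$ from above by the index $|\SL_2(\cO_{2\ell})|/|C_S^\ell(\tilde A)|$ and then to estimate $|C_S^\ell(\tilde A)|$ from below according to the conjugacy type of $\bar A$. First I would observe that the opening part of the proof of Proposition~\ref{min-max- of-dim-an d-nA}(1) establishes, for every $r\ge 2$ and every $\rho\in\mathrm{Irr}(\SL_2(\cO_r)\mid\psi_{[A]})$, the inequality $\dim(\rho)\le |\SL_2(\cO_r)|/|C_S^\ell(\tilde A)|$; this argument uses only that $C_S^\ell(\tilde A)/K^\ell$ is abelian and that $\psi_{[A]}$ extends to $C_S^\ell(\tilde A)$, both of which hold in general by Lemma~\ref{lem:centralizer-sl} (see also Remark~\ref{rmk:highest dim-number field.}). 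Since here $r=2\ell$, the lemma is thus reduced to a lower bound for $|C_S^\ell(\tilde A)|$.

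Next I would compute $|C_S^\ell(\tilde A)|$. Writing $C=C_{\GL_2(\cO_{2\ell})}(\tilde A)$, the group $C$ consists of the matrices $xI+y\tilde A$ by Lemma~\ref{lem:centralizer-form}(2); since $I$ and $\tilde A$ are independent modulo $\wp$, the intersection $C\cap M^\ell$ is $\{xI+y\tilde A\mid x\in 1+\pi^\ell\cO_{2\ell},\ y\in\pi^\ell\cO_{2\ell}\}$, of order $q^{2\ell}$. Using $|M^\ell|=q^{4\ell}$ this gives $|CM^\ell|=|C|\,|M^\ell|/|C\cap M^\ell|=|C|\,q^{2\ell}$. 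As $C_S^\ell(\tilde A)=CM^\ell\cap\SL_2(\cO_{2\ell})$ is the kernel of the determinant restricted to $CM^\ell$, and using only the trivial bound $|\det(CM^\ell)|\le|\cO_{2\ell}^\times|=(q-1)q^{2\ell-1}$, I obtain
\begin{equation*}
|C_S^\ell(\tilde A)|=\frac{|C|\,q^{2\ell}}{|\det(CM^\ell)|}\ \ge\ \frac{|C|\,q}{q-1}.
\end{equation*}

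Finally I would substitute the three values of $|C|$ furnished by Lemma~\ref{gl-centralizer-cardinality} (with $m=2\ell$), together with $|\SL_2(\cO_{2\ell})|=(q^3-q)q^{6\ell-3}$, into the resulting estimate $\dim(\rho)\le (q-1)^2(q+1)q^{6\ell-3}/|C|$. A direct computation then yields
\begin{equation*}
\dim(\rho)\le
\begin{cases}
(q+1)q^{2\ell-1} &,\ \ \bar A\ \text{split semisimple},\\
(q-1)q^{2\ell-1} &,\ \ \bar A\ \text{irreducible},\\
(q^2-1)q^{2\ell-2} &,\ \ \bar A\ \text{split, non-semisimple}.
\end{cases}
\end{equation*}
A cyclic $\bar A$ is necessarily non-scalar, so these three types are exhaustive, which gives (1); and each of the last two bounds is strictly smaller than $(q+1)q^{2\ell-1}$ (for the third, $(q^2-1)q^{2\ell-2}=(q-1)(q+1)q^{2\ell-2}<(q+1)q^{2\ell-1}$ since $q-1<q$), so equality in (1) forces $\bar A$ split semisimple, which gives (2).

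The order computations and the three case substitutions are routine; the only point requiring care is that the upper bound $\dim(\rho)\le|\SL_2(\cO_{2\ell})|/|C_S^\ell(\tilde A)|$ is being invoked \emph{without} the hypothesis $r\ge\mathbf R_\cO$ appearing in Proposition~\ref{min-max- of-dim-an d-nA}. This is legitimate because that particular inequality rests only on the abelianity of $C_S^\ell(\tilde A)/K^\ell$ and the extendability of $\psi_{[A]}$, and does not depend on the finer bound of Theorem~\ref{main-theorem}. I would therefore isolate exactly that half of the earlier argument as the input, making the deduction valid for all cyclic $A\in M_2(\cO_\ell)$ and all $\ell\ge 1$.
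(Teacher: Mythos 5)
Your proposal is correct and follows essentially the same route as the paper: bound $\dim(\rho)$ by $|\SL_2(\cO_{2\ell})|/|C_S^\ell(\tilde A)|$ via Clifford theory (the one-dimensionality of everything in $\mathrm{Irr}(C_S^\ell(\tilde A)\mid\psi_{[A]})$), estimate $|C_S^\ell(\tilde A)|$ from below using the trivial bound on the image of the determinant, and then substitute the three centralizer cardinalities of Lemma~\ref{gl-centralizer-cardinality} — your computation $|C_S^\ell(\tilde A)|\ge |C|\,q/(q-1)$ with $|C|=|C_{\GL_2(\cO_{2\ell})}(\tilde A)|$ is the same quantity the paper obtains through the identity $|C_S^\ell(\tilde A)|=\bigl(|C_{\GL_2(\cO_\ell)}(A)|/|\det(C_{\GL_2(\cO_\ell)}(A))|\bigr)q^{3\ell}$, and the resulting case-by-case bounds agree exactly. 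Your explicit remark that the inequality $\dim(\rho)\le |\SL_2(\cO_r)|/|C_S^\ell(\tilde A)|$ needs no hypothesis $r\ge\mathbf R_\cO$ is also consistent with the paper, which covers this point by combining Proposition~\ref{min-max- of-dim-an d-nA} (noting $\mathbf R_\cO=2$ when $\Char(\cO)=2$) with Remark~\ref{rmk:highest dim-number field.} for $\Char(\cO)=0$.
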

\begin{proof} 
From Lemma~\ref{lem:centralizer-sl}, every $\rho \in \mathrm{Irr}(C_S^\ell(\tilde{A}) \mid \psi_{[A]} )$ has dimension one. Therefore
$\dim(\rho) \leq \frac{|\SL_2(\cO_{2\ell})|}{ |C_S^\ell(\tilde{A})|}$  for all $\rho \in \mathrm{Irr}(C_S^\ell(\tilde{A}) \mid \psi_{[A]} )$. 
Also, $C_S^\ell(\tilde A)= (C_{\GL_2(\cO_{r})} (\tilde{A}) M^{\ell})\cap \SL_2(\cO_{r})$ gives
\begin{equation}\label{C_S L cardinality}
|C_S^\ell(\tilde{A})|=\frac{|C_{\GL_2(\cO_{r})} (\tilde{A}) M^{\ell}|}{|\det(C_{\GL_2(\cO_{r})} (\tilde{A}) M^{\ell})|}
=\frac{
	|C_{\GL_2(\cO_{\ell})}(A)| \times q^{4\ell}}{|\det(C_{\GL_2(\cO_{\ell})} (A))| \times q^\ell} 
  = \frac{ |C_{\GL_2(\cO_{\ell})}(A)|}{|\det(C_{\GL_2(\cO_{\ell})}(A))|}\times q^{3\ell}.
\end{equation}
By using $|\SL_2(\cO_{2\ell})|=(q^2 -1)q^{6\ell-2}$ and 
$|\det(C_{\GL_2(\cO_{\ell})}(A))|\leq |\cO_{\ell}^\times|=(q-1)q^{\ell-1},$ we obtain 
$$\dim(\rho) \leq  \frac{|\SL_2(\cO_{2\ell})|\times |\det(C_{\GL_2(\cO_{\ell})}(A))|}{ |C_{\GL_2(\cO_{\ell})}(A)|\times q^{3\ell}}\leq  \frac{ (q^2 -1)(q-1)q^{4\ell-3}}{ |C_{\GL_2(\cO_{\ell})}(A)|}.$$
The result now follows from Lemma~\ref{gl-centralizer-cardinality}.
\end{proof}
\begin{proposition}\label{prop:high dim iff split semisimple}
Assume $\cO \in \dvrtwozero$. Let $\ell>\ee$ and $A \in M_2(\cO_\ell)$ be a cyclic matrix.  
\begin{enumerate} 
	\item Any $\rho \in \mathrm{Irr}( \SL_2(\cO_{2 \ell}) \mid \psi_{[A]})$ satisfies $\dim(\rho) = (q+1) q^{2 \ell-1}$ if and only if $\bar{A}$ is split-semisimple.
	\item The group $\SL_2(\cO_{2 \ell})$ has $\frac{(q-1)^2 q^{2 \ell -2 }}{2}$ many inequivalent primitive irreducible representations of dimension $(q+1) q^{2 \ell-1}.$ 
\end{enumerate} 
\end{proposition}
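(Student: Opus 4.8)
The plan is to dispose of part (1) first and then read off the count in part (2) from the resulting description of the representations. Throughout, recall that $r=2\ell$ forces $\ell=\ell'$, and that the residue field has characteristic two, so $p=2$ and Lemma~\ref{lem:stabilizer-form}(3) applies. For part (1), the forward implication---that $\dim(\rho)=(q+1)q^{2\ell-1}$ forces $\bar A$ to be split semisimple---is exactly Lemma~\ref{lem:high dim implies split semisimple}(2), so nothing new is needed there. For the reverse implication I would argue as follows: if $\bar A$ is split semisimple then $\beta=\mathrm{trace}(A)\in\cO_\ell^\times$, and since $r=2\ell>2\mathrm{e}$, Theorem~\ref{S_A-in-number}(1) (together with the observation in its proof that $\mathrm{h}_{\tilde A}^\ell=(\pi^\ell)$ in this case) gives $\mathrm{H}_{\tilde A}^\ell\subseteq K^\ell$. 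Combining this with Lemma~\ref{lem:stabilizer-form}(3) and $\ell=\ell'$ yields the stabilizer collapse $C_{\SL_2(\cO_{2\ell})}(\psi_{[A]})=C_{\SL_2(\cO_{2\ell})}(\psi_A)=C_S^\ell(\tilde A)$.

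Given this collapse, Clifford theory via (\ref{eq:sl-clifford}) identifies $\mathrm{Irr}(\SL_2(\cO_{2\ell})\mid\psi_{[A]})$ with $\mathrm{Irr}(C_S^\ell(\tilde A)\mid\psi_{[A]})$ by induction. Since $C_S^\ell(\tilde A)/K^\ell$ is abelian and $\psi_{[A]}$ extends to $C_S^\ell(\tilde A)$ by Lemma~\ref{lem:centralizer-sl}, every member of $\mathrm{Irr}(C_S^\ell(\tilde A)\mid\psi_{[A]})$ is one-dimensional, and there are exactly $[C_S^\ell(\tilde A):K^\ell]$ of them. Hence every $\rho$ has dimension $[\SL_2(\cO_{2\ell}):C_S^\ell(\tilde A)]$. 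To evaluate this I would use (\ref{C_S L cardinality}) together with the split semisimple value $|C_{\GL_2(\cO_\ell)}(A)|=(q-1)^2q^{2\ell-2}$ from Lemma~\ref{gl-centralizer-cardinality} and $|\det(C_{\GL_2(\cO_\ell)}(A))|=(q-1)q^{\ell-1}$ from Lemma~\ref{lem:image-det-map} (valid since $\beta$ is invertible); this gives $|C_S^\ell(\tilde A)|=(q-1)q^{4\ell-1}$ and therefore $\dim(\rho)=(q+1)q^{2\ell-1}$, completing part (1).

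For part (2), by part (1) the primitive irreducible representations of $\SL_2(\cO_{2\ell})$ of dimension $(q+1)q^{2\ell-1}$ are precisely those lying above the orbits of $\psi_{[A]}$ with $\bar A$ cyclic and split semisimple---cyclicity being automatic for a split semisimple $\bar A$ and, via the primitivity criterion of Section~\ref{sec: basic-framework}, guaranteeing primitivity. Representations above distinct orbits are inequivalent, so the total is the number of such orbits times the number of constituents above a single orbit. The former is $\frac{(q-1)q^{\ell-1}}{2}$ by Lemma~\ref{lem:number-of-orbits-split semisimple}(1) (using $\ell=\ell'>\mathrm{e}$), and the latter is $[C_S^\ell(\tilde A):K^\ell]=\frac{(q-1)q^{4\ell-1}}{q^{3\ell}}=(q-1)q^{\ell-1}$ from the computation above. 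Multiplying gives $\frac{(q-1)^2q^{2\ell-2}}{2}$, as claimed.

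The only genuinely delicate points are the stabilizer collapse $C_{\SL_2(\cO_{2\ell})}(\psi_{[A]})=C_S^\ell(\tilde A)$, which rests entirely on the characteristic-zero computation $E_{\tilde A}=(\pi^\ell)$ of Theorem~\ref{S_A-in-number}(1), and the bookkeeping in part (2): one must be sure that each split semisimple orbit contributes exactly $[C_S^\ell(\tilde A):K^\ell]$ distinct one-dimensional extensions of $\psi_{[A]}$ and that no two orbits yield the same representation of $\SL_2(\cO_{2\ell})$. Once these are in place, everything else reduces to index computations with the cardinalities already recorded in Lemmas~\ref{gl-centralizer-cardinality} and \ref{lem:number-of-orbits-split semisimple}.
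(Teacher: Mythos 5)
Your proposal is correct and follows essentially the same route as the paper: the forward implication via Lemma~\ref{lem:high dim implies split semisimple}(2), the converse via the computation $\mathrm{h}_{\tilde{A}}^{\ell}=(\pi^\ell)$ for invertible $\beta$ with $\ell>\mathrm{e}$ (giving the stabilizer collapse $C_{\SL_2(\cO_{2\ell})}(\psi_{[A]})=C_{\SL_2(\cO_{2\ell})}(\psi_{A})$, which for $r=2\ell$ equals $C_S^\ell(\tilde{A})$), Clifford theory with one-dimensional extensions, and for part (2) the orbit count of Lemma~\ref{lem:number-of-orbits-split semisimple} multiplied by the per-orbit count. Your phrasing of the per-orbit count as $[C_S^\ell(\tilde{A}):K^\ell]$ is just the paper's $|C_{\SL_2(\cO_{\ell})}(A)|=(q-1)q^{\ell-1}$ in different notation, so the two arguments coincide.
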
 
\begin{proof}
For (1), by Lemma~\ref{lem:high dim implies split semisimple}, we need to only prove that the result holds for every $A \in M_2(\cO_\ell)$ with split semisimple $\bar{A}$. Without loss of generality, we can take $A= \mat0{a^{-1}\alpha}a\beta.$ 
Since $\bar{A}$ is split-semisimple, $\beta=\mathrm{trace}(A)$ is invertible. From Proposition~\ref{prop:charactrization of h_Atilda^ell and h_Atilda^ell'}(2), we have 
$\mathrm{h}_{\tilde{A}}^\ell =\pi^\ell\cO_{2\ell}.$ Hence $C_{\SL_2(\cO_{2\ell})} (\psi_{[A]})= C_{\SL_2(\cO_{2\ell})}(\psi_{A}).$  By 
Clifford theory, for $\rho \in \mathrm{Irr}( \SL_2(\cO_{2 \ell}) \mid \psi_{[A]})$ there exists 
$\phi \in   \mathrm{Irr}( C_{\SL_2(\cO_{2\ell})}(\psi_{A}) \mid \psi_{[A]})$ such that $\rho \cong \mathrm{Ind}_{C_{\SL_2(\cO_{2\ell})}(\psi_{A})}^{ \SL_2(\cO_{2\ell})}(\phi) .$ Since $\phi$ is one-dimensional, 
$\dim(\rho) = \frac{|\SL_2(\cO_{2\ell})|}{|C_{\SL_2(\cO_{2\ell})}(\psi_{A})|}= (q+1) q^{2 \ell-1}.$ The last equality follows from  Lemmas~\ref{gl-centralizer-cardinality} and \ref{lem:image-det-map}.
We now prove (2). From Theorem~\ref{thm:number-of-orbits-split semisimple}, the number of orbits of $\psi_{[A]}$ with split semisimple $\bar{A}$  is $ \frac{(q-1)(q^{\ell-1})}{2}.$ Hence
by (1),  it is enough to show that for $A \in M_2(\cO_\ell)$ with split-semisimple $\bar{A}$, $| \mathrm{Irr}( \SL_2(\cO_{2 \ell}) \mid \psi_{[A]})|=(q-1)q^{\ell-1}.$ Let $A= \mat0{a^{-1}\alpha}a\beta \in M_2(\cO_\ell)$ be such that $\bar{A}$ is split-semisimple. Note that  $C_{\SL_2(\cO_{2\ell})} (\psi_{[A]})=C_{\SL_2(\cO_{2\ell})}(\psi_{A}).$ The result now follows because 
$$| \mathrm{Irr}( \SL_2(\cO_{2 \ell}) \mid \psi_{[A]})|=| \mathrm{Irr}(C_{\SL_2(\cO_{2\ell})}(\psi_{A}) \mid \psi_{[A]})|=|C_{\SL_2(\cO_{\ell})}(A)|=(q-1)q^{\ell-1}.$$
\end{proof}
\begin{proposition}\label{prop:strict inequality}
Assume 
$\cO \in \dvrtwoplus.$ Let $\ell \geq2 $ 
\begin{enumerate} 
\item Let $A=\mat0{a^{-1}\alpha}a\beta \in  M_2(\cO_\ell)$ be such that $\beta =x^2 $ for some $x \in \cO_{\ell}^\times.$ Then there exists $ \chi \in \mathcal{E}_{\tilde{A}}$ such that  $ H_{\tilde{A}}(\chi)\neq C_{\SL_2(\cO_{2\ell})}(\psi_A).$
\item For $A = \mat 001{1 + \pi} \in M_2(\cO_\ell)$, every $\rho \in \mathrm{Irr}( \SL_2(\cO_{2 \ell}) \mid \psi_{[A]})$ has dimension  $(q+1) q^{2 \ell-1}.$ 
\end{enumerate} 
\end{proposition} 
\begin{proof}
From Proposition~\ref{prop:charactrization of h_Atilda^ell and h_Atilda^ell'}(1), we obtain that $ \mathrm{h}_{\tilde{A}}^\ell =\{0, \tilde \beta\}+\pi^\ell\cO_{2\ell}.$  By using Corollary~\ref{cor:E_tilde A remark} and Theorem~\ref{thm:simplification of extension conditions}, $E_{\tilde A}=\{0, \tilde \beta\}+\pi^\ell\cO_{2\ell}.$ Therefore by Theorem~\ref{main_theorem-2}(1),
there exists an extension $\phi$ of $\psi_{[A]}$ to $C_{\SL_2(\cO_{2\ell})}(\psi_A)\mathbb E_{\tilde A}.$ Now (1) follows because $\chi:=\phi|_{C_{\SL_2(\cO_{2\ell})}(\psi_A)} \in \mathcal{E}_{\tilde{A}}$ and $H_{\tilde{A}}(\chi)=C_{\SL_2(\cO_{2\ell})}(\psi_A)\mathbb E_{\tilde A}\neq C_{\SL_2(\cO_{2\ell})}(\psi_A).$

For (2), we have $\bar{A}=\mat 0011$ implies that  $\bar{A}$ is split semisimple. Let  $\rho \in \mathrm{Irr}( \SL_2(\cO_{2 \ell}) \mid \psi_{[A]}).$ We claim that $\rho \cong \mathrm{Ind}_{C_{\SL_2(\cO_{2\ell})} (\psi_{A})}^{ \SL_2(\cO_{2 \ell}) }(\phi)$ for some $\phi \in \mathrm{Irr}( C_{\SL_2(\cO_{2\ell})} (\psi_{A}) \mid \psi_{[A]}).$ 
The trace of matrix $A = \mat 001{1 + \pi}$ is $1+\pi \in \cO_\ell^\times.$ From Proposition~\ref{prop:charactrization of h_Atilda^ell and h_Atilda^ell'},   we have $\mathrm{h}_{\tilde{A}}^\ell = 	\{0,1+\pi\} +\pi^\ell\cO_{2\ell},$ 
where  $1+\pi \in \cO_{r}$ is a lift of $1+\pi \in \cO_{\ell}.$  By Theorem~\ref{main_theorem-2}(3),
it is enough to show that $C_{\SL_2(\cO_{2\ell})} (\psi_{A})\mathbb E_{\tilde A}=C_{\SL_2(\cO_{2\ell})} (\psi_{A}).$ 
Note that  $1+\pi +\pi^\ell z \notin \cO_{2\ell}^2 $ modulo $\pi^\ell\cO_{2\ell}$ for any $z \in \cO_{2\ell}.$ Hence by Theorem~\ref{thm:simplification of extension conditions} and Corollary~\ref{cor:E_tilde A remark}, we obtain $E_{\tilde A}=\pi^\ell\cO_{2\ell}.$ Hence $C_{\SL_2(\cO_{2\ell})} (\psi_{A})\mathbb E_{\tilde A}=C_{\SL_2(\cO_{2\ell})} (\psi_{A}).$ 
We now proceed to complete the proof. Every $\phi \in \mathrm{Irr}( C_{\SL_2(\cO_{2\ell})} (\psi_{A}) \mid \psi_{[A]})$ is one-dimensional. Hence  we have
$\dim(\rho)=\frac{ |\SL_2(\cO_{2 \ell}) |}{|C_{\SL_2(\cO_{2\ell})} (\psi_{A})|}.$  
From Lemmas~\ref{lem:image-det-map} and \ref{gl-centralizer-cardinality},  we have  $|\det(C_{\GL_2(\cO_{\ell})}(A))|=|\cO_{\ell}^\times|=(q-1)q^{\ell-1}$ and $|C_{\GL_2(\cO_{\ell})}(A)|= (q-1)^2 q^{2\ell -2}$ respectively.
Therefore by (\ref{C_S L cardinality}),  $|C_{\SL_2(\cO_{2\ell})} (\psi_{A})|=(q-1)q^{4\ell-1}.$ Hence 	$\dim(\rho)=(q+1)q^{2\ell-1}.$
\end{proof}
\begin{lemma}\label{lem:max dim from cyclic}
Any irreducible representation of $\SL_2(\cO_{2\ell-1})$ has dimension strictly less than $(q+1) q^{2 \ell -1}.$
\end{lemma}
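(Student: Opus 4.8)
The plan is to split the irreducible representations of $\SL_2(\cO_{2\ell-1})$ according to their level, and to treat the top level $2\ell-2$ separately from the lower ones. Write $r=2\ell-1$, so that $\ell=\lceil r/2\rceil$ and $\ell'=\lfloor r/2\rfloor=\ell-1$; we may assume $\ell\ge 2$. First I would dispose of every $\rho$ of level $s\le 2\ell-3$: such a $\rho$ factors through $\SL_2(\cO_{s+1})$ with $s+1\le 2\ell-2$, so $\dim(\rho)$ is at most the largest dimension occurring for $\SL_2(\cO_{s+1})$. If $s+1=2L$ is even this is $\le (q+1)q^{2L-1}=(q+1)q^{s}\le (q+1)q^{2\ell-3}$ by Lemma~\ref{lem:high dim implies split semisimple}, while if $s+1$ is odd it is strictly smaller by induction on $\ell$ (the present statement applied to the smaller group). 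In all these cases $\dim(\rho)\le (q+1)q^{2\ell-3}<(q+1)q^{2\ell-2}$, so only the primitive (level $2\ell-2$) representations need genuine work.

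A primitive $\rho$ lies over a cyclic character $\psi_{[A]}\in\widehat{K^\ell}$ with $A\in M_2(\cO_{\ell'})$, and Clifford theory gives $\dim(\rho)=[\SL_2(\cO_r):C_{\SL_2(\cO_r)}(\psi_{[A]})]\cdot\dim(\sigma)$ for some $\sigma\in\mathrm{Irr}(C_{\SL_2(\cO_r)}(\psi_{[A]})\mid\psi_{[A]})$. I would bound the two factors separately. For the orbit length, since $C_{\SL_2(\cO_r)}(\psi_{[A]})\supseteq C_{\SL_2(\cO_r)}(\psi_A)$ and, among cyclic $A$, the centralizer $|C_{\SL_2(\cO_r)}(\psi_A)|$ is smallest exactly when $\bar A$ is split semisimple, Lemma~\ref{gl-centralizer-cardinality} together with Lemma~\ref{lem:image-det-map} yields $[\SL_2(\cO_r):C_{\SL_2(\cO_r)}(\psi_{[A]})]\le (q+1)q^{2\ell-3}$.

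For the second factor I would analyse the non-abelian congruence layer. The crucial observation is that $K^{\ell'}/K^\ell$ is abelian and that $\psi_A$ induces on it, via the commutator formula $[\,I+\pi^{\ell'}X,\,I+\pi^{\ell'}Y\,]=I+\pi^{r-1}[X,Y]$, the alternating form $(X,Y)\mapsto\boldsymbol{\psi}(\mathrm{trace}(\bar A[X,Y]))$ on $\mathfrak{sl}_2(\mathbb F_q)$. Because the trace form on $\mathfrak{sl}_2(\mathbb F_q)$ is degenerate in characteristic two with radical exactly the scalars, this alternating form has one-dimensional radical, so the associated Stone--von Neumann representations of $K^{\ell'}$ above $\psi_{[A]}$ have dimension $q$; combined with the fact that $C_S^{\ell'}(\tilde A)/K^{\ell'}$ is abelian (Lemma~\ref{lem:centralizer-sl}) and with the extension statements underlying Theorem~\ref{main-theorem}, this should give $\dim(\sigma)\le q$. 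Multiplying the two bounds produces $\dim(\rho)\le (q+1)q^{2\ell-2}$.

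The hard part is the strict inequality, and this is exactly where the parity of $r$ and Lemma~\ref{lem:stabilizer-form} must be used. The orbit bound $(q+1)q^{2\ell-3}$ is attained only when $\bar A$ is split semisimple with invertible trace, and precisely in that situation the identification $A\sim A+\tilde\beta I$ produces the extra element $e_{\tilde\beta}\in\mathrm{H}^{\ell'}_{\tilde A}$ of Lemma~\ref{lem:stabilizer-form}(3), which strictly enlarges $C_{\SL_2(\cO_r)}(\psi_{[A]})$ beyond $C_{\SL_2(\cO_r)}(\psi_A)$ and so strictly shortens the orbit. I would therefore argue that the two extremes cannot occur together: whenever $\dim(\sigma)=q$ is realized (the split semisimple, invertible-trace case) the orbit is strictly shorter than $(q+1)q^{2\ell-3}$, and whenever the orbit has maximal length $\dim(\sigma)<q$; either way $\dim(\rho)<(q+1)q^{2\ell-2}$. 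Verifying this interaction --- in particular checking that $e_{\tilde\beta}$ fixes rather than permutes the Stone--von Neumann constituents, so that the extra stabilising element only shortens the orbit without doubling $\dim(\sigma)$ --- is the main obstacle, and it is the precise step at which the characteristic-two behaviour of the stabiliser is indispensable.
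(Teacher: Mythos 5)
Your proposal takes a completely different route from the paper, whose entire proof is a one-line citation: the bound is read off from Onn's classification of the irreducible representations of $\GL_2(\cO_r)$ \cite[Theorem~1.4]{MR2456275} (every irreducible representation of $\SL_2(\cO_{2\ell-1})$ is a constituent of the restriction of one of $\GL_2(\cO_{2\ell-1})$, whose dimensions are completely listed there). Your from-first-principles Clifford analysis would be a genuinely different proof if it worked, but it has two concrete gaps. First, the blanket claim $\dim(\sigma)\leq q$, and the assertion that the radical of the form $B(X,Y)=\boldsymbol{\psi}(\mathrm{trace}(\bar A[X,Y]))$ on $K^{\ell'}/K^{\ell}\cong \sl_2(\mathbb F_q)$ is exactly the scalars, are false in general: in residue characteristic two one has $[X,Y]\in\mathbb F_q\cdot I$ for all $X,Y\in\sl_2(\mathbb F_q)$, so $B(X,Y)=\boldsymbol{\psi}(c(X,Y)\,\mathrm{trace}(\bar A))$, and when $\mathrm{trace}(\bar A)=0$ (your Type $\NS$ matrices) the form vanishes identically --- the radical is all of $\sl_2(\mathbb F_q)$. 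There the Stone--von Neumann mechanism gives nothing, $\dim(\sigma)$ is governed by the unanalysed group $C_{\SL_2(\cO_r)}(\psi_{[A]})/K^{\ell'}$, and the paper's own machinery (Theorem~\ref{main-theorem}, Proposition~\ref{min-max- of-dim-an d-nA}) pins these dimensions down only up to a factor of $q^5$, far too coarse for a sharp strict bound; you would have to balance the (smaller) orbit against a possibly larger $\dim(\sigma)$ case by case, which you have not done.

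Second, and more seriously, the strictness dichotomy in your final paragraph fails structurally when $\Char(\cO)=0$ and $\ell'>\mathrm{e}$. The extra element $e_{\tilde\beta}$ exists only if $2\tilde\beta\equiv 0 \bmod \wp^{\ell'}$, i.e.\ in characteristic two or when $\ell'\leq\mathrm{e}$; for $\Char(\cO)=0$ with $\ell'>\mathrm{e}$ and $\beta$ invertible one has $\mathrm{h}_{\tilde{A}}^{\ell'}=(\pi^{\ell'})$ (exactly as in Lemma~\ref{h_A/h_a^l cardinality} and the proof of Theorem~\ref{S_A-in-number}), so $\mathrm{H}_{\tilde{A}}^{\ell'}\subseteq K^{\ell'}$, $C_{\SL_2(\cO_r)}(\psi_{[A]})=C_{\SL_2(\cO_r)}(\psi_A)$, and the split semisimple orbit attains your maximal length $(q+1)q^{2\ell-3}$. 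Yet in that same situation $\bar\beta\neq 0$, so the radical of $B$ is one-dimensional and every constituent of $\sigma|_{K^{\ell'}}$ over $\psi_{[A]}$ has dimension $q$, forcing $q\mid\dim(\sigma)$. Thus the two ``extremes'' you claim cannot coexist do coexist, and your scheme produces candidate representations of dimension at least $(q+1)q^{2\ell-3}\cdot q=(q+1)q^{2\ell-2}$ with no mechanism left to exclude them --- precisely the quantity the lemma must beat. The interaction you flag as ``the main obstacle'' is therefore not merely unverified: the proposed mechanism is vacuous exactly where the inequality is tightest, so this route cannot close without input of a different kind, which is presumably why the paper imports the result from the $\GL_2$ classification instead.
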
 
\begin{proof} 
From Onn~\cite[Theorem~1.4]{MR2456275}, any irreducible representation of $\GL_2(\cO_{2\ell-1})$ has dimension less than or equal to $(q+1) q^{2 \ell -2}.$ The lemma follows because  $\SL_2(\cO_{2\ell-1})$ is a subgroup of $\GL_2(\cO_{2\ell-1})$ and $(q+1) q^{2 \ell -2}<(q+1) q^{2 \ell -1}.$
\end{proof}
\begin{definition}\label{def:rep zeta poly}
For a finite group $G,$ the polynomial 
\[
\calp_G(X) := \sum_{\rho \in \mathrm{Irr}(G) } X^{\dim(\rho)}
\]
is called the {\it representation zeta polynomial} of $G.$
\end{definition}
\begin{proof}[\bf{Proof of Theorem~\ref{thm:zeta poly not eaqal- gp algebra}}] Consider the set of cyclic irreducible representations of $\SL_2(\cO_{2 \ell}),$ that is the one that lie above $\psi_{[A]}$ for a cyclic $A.$ Recall from Section~\ref{sec:basic-framework}, this is exactly the set of primitive irreducible representations, denoted by $\mathrm{Irr}^\pr(\SL_2(\cO_{2 \ell})),$ of $\SL_2(\cO_{2 \ell}).$ The primitive representation zeta polynomial corresponding to the cyclic representations of $\SL_2(\cO_{2 \ell})$ is defined as below.  
\[
\calp^\pr_{\SL_2(\cO_{2 \ell})}(X) = \sum_{\rho \in \mathrm{Irr}^\pr(\SL_2(\cO_{2 \ell})) } X^{\dim(\rho)}.
\]
We note that in case $A$ is not cyclic then $\psi_{[A]} $ is a trivial one-dimensional representation of $K^{2\ell-1}.$ Therefore $\SL_2(\cO_{2 \ell})$ has the following expression for the representation zeta polynomial. 
\[
\calp_{\SL_2(\cO_{2 \ell})}(X)   = \calp^\pr_{\SL_2(\cO_{2 \ell})}(X) + \calp_{\SL_2(\cO_{2\ell-1})} (X).
\]
From Lemma~\ref{lem:high dim implies split semisimple}, Proposition~\ref{prop:high dim iff split semisimple} and Proposition~\ref{prop:strict inequality}, both $\calp^\pr_{\SL_2(\cO_{2 \ell})}(X)$ and $\calp^\pr_{\SL_2(\cO'_{2 \ell}) }(X)$ are polynomials of degree $(q+1) q^{2 \ell-1}.$ Therefore by Lemma~\ref{lem:max dim from cyclic}, to prove $\calp_{\SL_2(\cO_{2 \ell})}(X) \neq \calp_{\SL_2(\cO'_{2 \ell}) }(X),$ it is enough to show that the   leading coefficients of  $\calp^\pr_{\SL_2(\cO_{2 \ell})}(X)$ and $\calp^\pr_{\SL_2(\cO'_{2 \ell}) }(X)$ are not equal.
By Proposition~\ref{prop:high dim iff split semisimple},  $\calp^\pr_{\SL_2(\cO_{2 \ell})}(X)$ is a polynomial of degree $(q+1) q^{2 \ell-1}$ with leading coefficient $\frac{(q-1)^2 q^{2 \ell -2}}{2}.$ We prove that the leading coefficient of corresponding 
$\calp^\pr_{\SL_2(\cO'_{2 \ell}) }(X)$ is strictly less than $\frac{(q-1)^2 q^{2 \ell -2}}{2}.$ For this, we focus our attention on representations $\rho \in \mathrm{Irr}(\SL_2(\cO'_{2 \ell}))$ such that 
$\rho $ lies above $\psi_{[A]}$ and $\bar{A}$ is split-semisimple. By Lemma~\ref{lem:high dim implies split semisimple}, these are the only representations that contribute, if at all, non-trivially to the leading coefficient of the zeta polynomial.  Let $A = \mat 0 {a^{-1} \alpha} a \beta  \in M_2(\cO'_{\ell})$ such that $\bar{A}$ is split semisimple, so in particular $\beta$ is invertible. By Lemma~\ref{lem:stabilizer-form}, we have  
\[
C_{\SL_2(\cO'_{2 \ell})} (\psi_{[A]}) = C_{\SL_2(\cO'_{2 \ell})} (\psi_{A})\{ \mathrm{I}, \mat 1{\tilde{a}^{-1} \tilde \beta}01  \}. 
\]
If $\rho \in \mathrm{Irr}(\SL_2(\cO'_{2 \ell}) \mid \psi_{[A]} )$
such that $\dim(\rho ) = (q+1)q^{2\ell-1},$ then $\rho \cong \mathrm{Ind}_{C_{\SL_2(\cO'_{2\ell})}(\psi_A)}^{\SL_2(\cO'_{2\ell})}\chi$  for some $\chi \in \mathcal{E}_{\tilde{A}}.$  Note that for such $\chi \in \mathcal{E}_{\tilde{A}},$ 
$\chi\neq \chi^{e_{\tilde{\beta}}}$ and 
 $ \mathrm{Ind}_{C_{\SL_2(\cO'_{2\ell})}(\psi_A)}^{\SL_2(\cO'_{2\ell})}\chi \cong \mathrm{Ind}_{C_{\SL_2(\cO'_{2\ell})}(\psi_A)}^{\SL_2(\cO'_{2\ell})}\chi^{e_{\tilde{\beta}}}.$ 
 Hence the number of representations in  $\mathrm{Irr}(\SL_2(\cO'_{2 \ell}) \mid \psi_{[A]} )$ with dimension $(q+1)q^{2\ell-1}$ is equal to $\mathrm{n}_A :=  \frac{|\{ \chi \in \mathcal{E}_{\tilde{A}} \,\,\mid\,\, H_{\tilde{A}}(\chi)=C_{\SL_2(\cO'_{2\ell})}(\psi_A) \}| }{2}.$ 
Hence the leading coefficient of $\calp_{\SL_2(\cO'_{2 \ell}) }(X)$ is equal to $\underset{ \psi_{[A]} \in \Sigma_{\cO'}^{ss}}{\sum} \mathrm{n}_A,$
where   $\Sigma_{\cO'}^{ss}$ is the set of orbits of $\psi_{[A]}$ with $\bar{A}$  split-semisimple. 
For every $\psi_{[A]} \in \Sigma_{\cO'}^{ss},$ we have
\[ 
\mathrm{n}_A \leq \frac{|\mathcal{E}_A|}{2} = \frac{|C_{\SL_2(\cO_{\ell})}(A)|}{2}= \frac{(q-1)q^{\ell-1}}{2}. 
\]
By Proposition~\ref{prop:strict inequality}, the inequality $\mathrm{n}_A \leq \frac{|\mathcal{E}_A|}{2}$ is strict for any $\psi_{[A]} \in \Sigma_{\cO'}^{ss}$ such that $\mathrm{trace}(A) = x^2$ for some $x  \in\cO_{\ell}^\times.$ The existence of such a split semisimple $A$ is easy to see, for example $A=\mat 0011$ will do. This, along with Theorem~\ref{thm:number-of-orbits-split semisimple},  implies that the leading coefficient of  $\calp_{\SL_2(\cO'_{2 \ell}) }(X)$ is strictly less than $\frac{(q-1)^2 q^{2 \ell -2}}{2}.$ This completes the proof of Theorem~\ref{thm:zeta poly not eaqal- gp algebra}.
\end{proof}

\section{Proof of Theorems~\ref{thm:condition for extension} and ~\ref{S_A-in-number}}
\label{sec:proof of thm:condition for extension}
	In this section, we prove Theorems~\ref{thm:condition for extension} and ~\ref{S_A-in-number}. 
\begin{proof}  [{\bf Proof of  Theorem~\ref{thm:condition for extension} }]
	 We first prove the equivalence of $(1)$ and $(2).$ For $x,y \in \cO_r,$ observe that  $g(x,y)=1 \,\,\mathrm{mod}\, (\pi^\ell)$  if and only if there exists $X \in  C_{S}^\ell(\tilde{A})$ such that $X = (xI + y \tilde A)Z$ for some $Z \in M^\ell .$   	
	Let $X  = (xI + y\tilde{A}) Z \in C_{S}^\ell (\tilde{A})$. For $\lambda \in \mathrm{h}_{\tilde{A}}^{\ell},$ by Lemma~\ref{centralizer-operations}(2), we obtain  that  the commutator $[e_\lambda, X] \in K^\ell $ if and only if $\lambda y \in \pi^\ell \cO_r .$ 
	Therefore to prove the equivalence, we need only to prove that the following holds for $\lambda \in \mathrm{h}_{\tilde{A}}^{\ell}$ and $[e_\lambda, X] \in K^\ell $.
	\[
		\psi_{[A]}([e_\lambda, (xI + y \tilde A)Z]) = \psi(f(\lambda, x,y)).
	\]
Since  $xI + y \tilde{A} \in C_{\GL_2(\cO_{r})} (\psi_A),$ we have 
	\begin{eqnarray*}
		\psi_{[A]}([e_\lambda, (xI + y \tilde{A})Z])  & = & \psi_A ([e_\lambda, (xI + y \tilde{A})Z]) \\
		& = &  \psi_A((xI + y \tilde{A}) [(xI + y \tilde{A})^{-1}, e_\lambda] [ e_\lambda ,Z] (xI + y \tilde{A})^{-1}) \\ &  = & \psi_A(  [(xI + y \tilde{A})^{-1}, e_\lambda ]) \psi_A ([ e_\lambda ,Z]).
	\end{eqnarray*}
	Now observe that
	\[
	\psi_A(  [(xI + y \tilde{A})^{-1}, e_\lambda ])=	\psi_A(  [ e_\lambda , (xI + y \tilde{A})])=	\psi(  \lambda y(\tilde{\alpha}y+ x (\tilde \beta -\lambda)-\tilde \alpha y -\tilde \beta x)  ) =\psi(-\lambda^2 xy),
	\]
	where the second last equality follows from
	Lemma~\ref{centralizer-operations} (2), $\lambda y=0 \,\,\mathrm{mod}\, (\pi^\ell)$, $g(x,y)=1 \,\,\mathrm{mod}\, (\pi^\ell)$ implies $\lambda y / g(x,y) = \lambda y$, and the definition of $\psi_A.$  Similarly, for $Z = I + \pi^\ell(t_{ij}),$ by  Lemma~\ref{centralizer-operations} (3), we obtain the following:
	\begin{eqnarray*}
		\psi_A ([ e_\lambda, Z])&=&	\psi( \pi^\ell \tilde{a}^{-1}  \lambda (  \tilde{a} t_{22}-  \tilde{a}t_{11} - \lambda t_{21} - \tilde{\beta} t_{21}) ) \\
		&=&  \psi( \pi^\ell \lambda ( t_{22}-t_{11}  )-\pi^\ell  \tilde{a}^{-1} t_{21}  \lambda (   \lambda+ \tilde{\beta} ))\\
		&=& \psi(\lambda(\det(Z)^{-1}-1+2 \pi^\ell t_{22} )-\pi^\ell  \tilde{a}^{-1} t_{21}  \lambda (   \lambda+ \tilde{\beta} )).
	\end{eqnarray*}
	Note that $\lambda \in \mathrm{h}_{\tilde{A}}^{\ell}$ implies $2 \lambda \pi^\ell =0$ and  $ \pi^\ell \lambda (   \lambda+ \tilde{\beta} ) =0.$ Hence $\psi_A ([ e_\lambda, Z])=\psi(\lambda(\det(Z)^{-1}-1)).$
	By combining these terms and the fact that $\det(Z)^{-1} = \det(xI + y\tilde A),$ we obtain
	\begin{eqnarray*}
		\psi_{[A]}([e_\lambda, (xI + yA)Z]) &=& \psi( -\lambda^2 xy +\lambda  (x^2+ \tilde \beta xy - \tilde \alpha  y^2  -1 )) \\ 
		&=& \psi(xy \lambda (\tilde \beta- \lambda ) -\tilde \alpha \lambda y^2 + \lambda ( x^2-1) )\\
		&=&\psi(f(\lambda, x,y)). 
	\end{eqnarray*} 
This gives that $(1)$ and $(2)$ are equivalent. 
		Next  we prove the equivalence of $(2)$ and $(3).$ By definition, $(3)$ implies $(2).$  We proceed to prove $(2)$ implies $(3).$ 
	If $(3)$ is not true, then there exists $X_1,$ $X_2,$ \ldots, $X_n$ elements of  $C_{S}^\ell(\tilde{A})$ and integers $c_1,$ $c_2,$ \ldots, $c_n$ such that $\prod_{i=1}^n [e_\lambda^{c_i}, X_i] $ is an element of $K^\ell$ but is not in the kernel of $\psi_{[A]}.$ Let $\widehat{\psi_{A}}$ be an extension of $\psi_{[A]}$ to the group $C_{G}^\ell(\tilde{A})$(exists due to Lemma~\ref{lem:centralizer-form}(4)).  By Lemma~\ref{centralizer-operations}(1) and the fact that $\lambda \in \mathrm{h}_{\tilde{A}}^\ell,$ we obtain $e_\lambda^{c_i} X_i e_\lambda^{-c_i} \in C_{G}^\ell(\tilde{A}).$ Also, $\lambda \in \mathrm{h}_{\tilde{A}}^\ell$ gives $e_\lambda^{2} \in C_{G}^\ell(\tilde{A}).$ Therefore 
	$$\widehat{\psi_{A}} (e_\lambda^{c_i} X_i e_{\lambda}^{-c_i} )=\begin{cases} \widehat{\psi_{A}} (e_\lambda X_i e_{\lambda}^{-1} ) &  \mathrm{if}\,\, c_i \,\, \mathrm{is}\,\, \mathrm{odd},\\ \widehat{\psi_{A}} ( X_i ) &  \mathrm{if}\,\, c_i \,\, \mathrm{is}\,\, \mathrm{even}.
	\end{cases}$$
	This gives 
	\begin{equation}\label{eqn-assumptin}
	1 \neq \widehat{\psi_A} (\prod_{i=1}^n [e_\lambda^{c_i}, X_i]) = \prod_{i=1}^n \widehat{\psi_{A}} (e_\lambda^{c_i} X_i e_{\lambda}^{-c_i} )) \widehat{\psi_{A}}(X_i^{-1}) = \widehat{\psi_A}([e_\lambda, \prod_{i: \, c_i \,\,\mathrm{is}\,\, \mathrm{odd}} X_i ]).
	\end{equation}
	Next, we prove that $[e_\lambda, \prod_{i: \, c_i \,\,\mathrm{is}\,\, \mathrm{odd}} X_i ] \in K^\ell.$ For this we  note that $C_{G}^\ell(\tilde{A})/M^\ell$ is abelian. Therefore $[e_\lambda, \prod_{i: \, c_i  \,\,\mathrm{is}\,\, \mathrm{odd}} X_i ] M^\ell = \prod_{i=1}^n [e_\lambda^{c_i}, X_i] M^\ell.$ By our assumption $\prod_{i=1}^n [e_\lambda^{c_i}, X_i]$ is an element of $K^\ell.$ 
Therefore $[e_\lambda, \prod_{i: \, c_i  \,\,\mathrm{is}\,\, \mathrm{odd}} X_i ]\in K^\ell,$ because it has determinant one.
By the hypothesis  $$\widehat{\psi_A}([e_\lambda, \prod_{i: \, c_i  \,\,\mathrm{is}\,\, \mathrm{odd}} X_i ]) = 1.$$ This contradicts (\ref{eqn-assumptin}).  
	Therefore $(2)$ and $(3)$ are equivalent. 	
	Last, we prove the equivalence of (3) and (4). The implication of (4) from (3) is clear. Therefore we mainly focus on (3) implies (4). For this, we consider the hierarchy of groups  as given in Figure~\ref{hierarchy-of-groups-1}.
	\begin{tiny}
		\begin{figure}
			\xymatrix{
				&C_S^\ell(\tilde{A}) \langle e_\lambda\rangle& & & [C_{S}^\ell(\tilde{A}), \langle e_\lambda\rangle] \ar[dl] & \\ 
				\langle e_\lambda\rangle \ar[ur]&&C_S^\ell(\tilde{A}) \ar[ul] & [C_{S}^\ell(\tilde{A}), \langle e_\lambda\rangle] K^\ell  \ar[l] & & [C_{S}^\ell(\tilde{A}), \langle e_\lambda\rangle] \cap K^\ell \ar[dl] \ar[ul] \\ 
				&\langle e_\lambda\rangle \cap C_S^\ell(\tilde{A}) \ar[ur] \ar[ul] & & & K^\ell \ar[ul] & \\ 
			}
			\caption{}\label{hierarchy-of-groups-1}
		\end{figure}
	\end{tiny} 
We note that the group $[C_{S}^\ell(\tilde{A}), \langle e_\lambda\rangle]$ stabilizes $\psi_{[A]}.$ Therefore by Lemma~\ref{diamond-lemma} and the hypothesis, there exists a one-dimensional representation $\chi$ of $[C_{S}^\ell(\tilde{A}), \langle e_\lambda\rangle] K^\ell$ such that restriction of $\chi$ to $[C_{S}^\ell(\tilde{A}), \langle e_\lambda\rangle]$ is trivial. By Lemma~\ref{lem:centralizer-sl}, the quotient group $C_S^\ell(\tilde{A})/ K^\ell$ is abelian and there exists an extension of $\psi_{[A]}$ to $C_S^\ell(\tilde{A}).$ Therefore there exists an extension $\tilde{\chi}$ of $\chi$ to  $C_S^\ell(\tilde{A})$. The one-dimensional representation $\tilde{\chi}$ is stable under the action of $\langle e_\lambda\rangle$ because $\tilde{\chi}|_{[C_{S}^\ell(\tilde{A}),\langle e_\lambda\rangle]} $ is trivial. Therefore (4)  follows by Lemma~\ref{diamond-lemma} and the fact that the group $\langle e_\lambda\rangle$ is abelian. This completes the proof of  Theorem~\ref{thm:condition for extension}
\end{proof} 
\begin{proof}[{\bf Proof of Theorem~\ref{S_A-in-number}}] 
	
\noindent {\bf (1): Let $r> 2 \ee $ and $\beta \in \cO_{\ell'}^\times.$} 
Note that $\ell > \ee.$	By Proposition~\ref{prop:charactrization of h_Atilda^ell and h_Atilda^ell'}(2), we have
$	\mathrm{h}_{\tilde{A}}^{\ell}= \pi^{\ell}\cO_r .$ Therefore by  Corollary~\ref{cor:E_tilde A remark}(1), we obtain that
	$E_{\tilde{A}}=\pi^\ell\cO_r .$
	
\noindent {\bf (2): Let $r =4 \ee$ and $\beta \in \pi \cO_{\ell'}.$} 
Let $m_0= \min \{\ee, \val(\tilde\beta)\}.$ From Proposition~\ref{prop:charactrization of h_Atilda^ell and h_Atilda^ell'}(2), we have $\mathrm{h}_{\tilde{A}}^{\ell}=\pi^{\ell-m_0}\cO_r  .$
By Corollary~\ref{cor:E_tilde A remark}(2), we have $\pi^\ell\cO_r\subseteq E_{\tilde{A}}.$ Also, for $\lambda \in E_{\tilde{A}},$ $\lambda+\pi^\ell\cO_r\subseteq E_{\tilde{A}}.$ Therefore
 to show  that $E_{\tilde{A}}$ is either $ \{0, \pi^{\ell-1} ( \tilde \xi w^{2})^{-1}\}+\pi^\ell\cO_r $ or $\pi^\ell\cO_r ,$
 it is enough to prove the following: 
\begin{enumerate}[(a)]
\item $(\mathrm{h}_{\tilde{A}}^{\ell}\setminus \pi^{\ell-1}\cO_r ) \cap E_{\tilde{A}} =\emptyset .$ 
\item If $ \pi^{\ell-1} u \in E_{\tilde{A}}$ 
 for some $u \in \cO_{r}^\times,$ then
 $  u=(\tilde \xi w^{2})^{-1} \,\, \mathrm{mod}\,(\pi).$
\end{enumerate}

We first show (a).
 Let 
$\lambda \in\mathrm{h}_{\tilde{A}}^{\ell}\setminus \pi^{\ell-1}\cO_r .$ Then $ \ell-m_0\leq \ell-2$ and  $\lambda=\pi^i u,$
 for some  $u \in \cO_{r}^\times$ and $\ell-m_0\leq  i \leq \ell-2.$   Now choose $x=1+\pi^{r-\ee-i-1}(wu)^{-1}$ and $y=0.$ Then we obtain  $x^2 + \tilde{\beta} xy - \tilde{\alpha} y^2 = 1 \,\, \mathrm{mod}\, (\pi^\ell),$ $\lambda y = 0 \,\, \mathrm{mod}\,(\pi^\ell)$ and   
\begin{eqnarray*}
 \psi(f( \lambda,x,y))&=&\psi(xy \lambda (\tilde \beta- \lambda ) -\tilde \alpha \lambda y^2 + \lambda ( x^2-1) )\\
&=&  \psi(\pi^i u(2\pi^{r-\ee-i-1}(wu)^{-1}+\pi^{2r-2\ee-2i-2}(wu)^{-2}) ) \\
 &=& \psi(\pi^{r-1}) \neq 1. 
\end{eqnarray*}
Here we used the facts that $2=\pi^{\ee}w$ and $2r-2\ee-i-2\geq 2r-2\ee-(\ell-2)-2= r .$
%
%
%
Therefore $(x,y ) \in E_{\lambda, \tilde A}\setminus E_{\lambda, \tilde A}^{\circ}.$ By Theorem~\ref{thm:condition for extension}, $\lambda \notin E_{\tilde{A}} .$ Hence (a) holds.

Let 
 $ \lambda=\pi^{\ell-1} u \in E_{\tilde{A}}, $
 for some $u \in \cO_{r}^\times$ be such that  $u\neq (\tilde \xi w^2)^{-1}\,\, \mathrm{mod}\,(\pi).$ To show (b), it is enough to prove that $\lambda \notin  E_{\tilde{A}}.$ Note that $\tilde\xi \neq (u w^2)^{-1}\,\, \mathrm{mod}\,(\pi).$ By definition of $\tilde\xi,$ there exists $z\in \cO_r$ such that $\psi(\pi^{r-1}(z+(u w^2)^{-1} z^2 ))\neq 1.$
	Now observe that the pair $(x,y):=(1+\pi ^{\ee}(uw)^{-1}z, 0)\in   E_{\lambda, \tilde A}.$ But 
\begin{eqnarray*}
\psi(f(\lambda,x,y))&=&\psi(xy \lambda (\tilde \beta- \lambda ) -\tilde \alpha \lambda y^2 + \lambda ( x^2-1) ) \\
&=&\psi(\pi^{\ell-1} u (2\pi ^{\ee}(uw)^{-1}z+\pi ^{2\ee}(uw)^{-2}z^2))\\
&=&\psi(\pi^{\ell-1} u (\pi ^{2\ee}u^{-1}z+\pi ^{2\ee}(uw)^{-2}z^2))\\
&=&\psi(\pi^{r-1}(z+(u w^2)^{-1} z^2 )
\neq 1.
\end{eqnarray*}
	Therefore $(x,y)\notin  E_{\lambda, \tilde A}^{\circ}.$ By Theorem~\ref{thm:condition for extension}, $\lambda \notin  E_{\tilde{A}}.$
	Hence (b) holds.

	Now assume $\ee=1$ and so $r=4.$
		Let $\beta =\pi v$ and $\tilde \beta =\pi \tilde v.$ It is easy to notice that 
		an element $ (x,y)\in  E_{\pi ( \tilde \xi w^{2})^{-1}, \tilde A}$ if and only if $x^2=1\,\, \mathrm{mod}\, (\pi^2)$ and $y=0 \,\, \mathrm{mod}\, (\pi).$ These  conditions on $x$ and $y$ hold if and only if 
		$ (x,y)=(1,0)\,\, \mathrm{mod}\,(\pi).$ Thus  arbitrary  $(x,y)=(1+\pi x_1 , \pi y_1 )\in  E_{\pi ( \tilde \xi w^{2})^{-1}, \tilde A}$ satisfies 
		\begin{eqnarray}
		\psi(f(\pi ( \tilde \xi w^{2})^{-1}, 1+\pi x_1,\pi y_1) ) &=&\psi(\pi^{3}(
		\frac{\tilde v}{\tilde \xi w^2}y_1- \frac{1}{\tilde \xi^2 w^4}y_1-\frac{\tilde \alpha}{\tilde \xi w^2}y_1^2
		)).\label{eqn-mmm}
		\end{eqnarray} 
		Here we used the facts that $xy\pi ( \tilde \xi w^{2})^{-1}(\tilde \beta -\pi ( \tilde \xi w^{2})^{-1})=\pi^{3}(	\frac{\tilde v}{\tilde \xi w^2}y_1- \frac{1}{\tilde \xi^2 w^4}y_1)$ 
and 
		\begin{eqnarray*}
\psi(\pi ( \tilde \xi w^{2})^{-1} ( x^2-1))&=&\psi(\pi ( \tilde \xi w^{2})^{-1} ( 2\pi x_1+\pi^2 x_1^2))\\
	&=&\psi(\pi^3( (\tilde \xi^{-1} w^{-1}x_1) +\tilde \xi(\tilde \xi^{-1} w^{-1}x_1)^2))\\
&=&1.
\end{eqnarray*}
		   Now let $\mu \in \cO_r$ be such that $\mu^2=\tilde \alpha \,\, \mathrm{mod}\,(\pi).$ By definition of $\xi,$ we have $ \psi(\pi^{3}(
		\frac{\mu}{\tilde \xi w}y_1+\tilde \xi \frac{\tilde \alpha}{(\tilde \xi w)^2}y_1^2
		))=1 .$ Hence  (\ref{eqn-mmm}) implies 
		\begin{eqnarray*}
			\psi(f(\pi ( \tilde \xi w^{2})^{-1}, 1+\pi x_1,\pi y_1) ) &=&\psi(\pi^{3}(\tilde \xi w)^{-1}(
			\frac{\tilde v}{w}- \frac{1}{\tilde \xi w^3}+ \mu)y_1
			).
		\end{eqnarray*} 
		
		Since $\psi(\pi^{3})\neq 1$ and $y_1 \in \cO_r$ is arbitrary, we conclude that $\psi(f(\pi ( \tilde \xi w^{2})^{-1}, 1+\pi x_1,\pi y_1) )=1$ if and only if $\frac{\tilde v}{w}- \frac{1}{\tilde \xi w^3}+ \mu=0 \,\, \mathrm{mod}\,(\pi).$
This  condition on $\tilde v$ and $ \mu$ holds if and only if $\alpha=(\frac{\tilde v}{w})^2+(\frac{1}{\tilde \xi w^3})^2 \,\, \mathrm{mod}\,(\pi).$ Therefore by Theorem~\ref{thm:condition for extension}, we must have 
		$E_{\tilde{A}} = \{0, \pi ( \tilde \xi w^{2})^{-1}\}+\pi^2\cO_r $ if and only  if $\alpha=(\frac{\tilde v}{w})^2+(\frac{1}{\tilde \xi w^3})^2 \,\, \mathrm{mod}\,(\pi).$
		
\noindent {\bf (3): Let $r > 4 \ee$ and $\beta \in \pi \cO_{\ell'}.$}
From Proposition~\ref{prop:charactrization of h_Atilda^ell and h_Atilda^ell'}(2), we have 
$
\mathrm{h}_{\tilde{A}}^{\ell}=\pi^{\ell-m_0}\cO_r ,$ where $ m_0= \min \{\ee, \val(\tilde\beta)\}.$ Note that $  \mathrm{h}_{\tilde{A}}^{\ell}\cap \pi^{\ell'}\cO_r=\pi^{\ell'}\cO_r.$
Hence by Corollary~\ref{cor:E_tilde A remark}(1), we  have $\pi^{\ell'}\cO_r \subseteq E_{\tilde{A}}.$ Therefore by Theorem~\ref{thm:condition for extension}, to show $E_{\tilde{A}}=\pi^{\ell'}\cO_r ,$ it is enough to prove that for any $\lambda \in 	\mathrm{h}_{\tilde{A}}^{\ell} \setminus \pi^{\ell'}\cO_r  ,$ there exists 
	$(x,y ) \in E_{\lambda, \tilde A}\setminus E_{\lambda, \tilde A}^{\circ}.$
Let  $\lambda \in 	\mathrm{h}_{\tilde{A}}^{\ell} \setminus \pi^{\ell'}\cO_r .$ Then  $\lambda = \pi^{i}u,$ for some $u \in \cO_{r}^\times$ and $\ell-m_0\leq  i \leq \ell'-1.$
Let us choose $x=1+\pi^{r-\ee-i-1}(wu)^{-1}$ and $y=0.$ Then by using the fact that $ \ell>2\ee,$   we obtain  $x^2 + \tilde{\beta} xy - \tilde{\alpha} y^2 = 1 \,\, \mathrm{mod}\, (\pi^\ell),$ $\lambda y = 0 \,\, \mathrm{mod}\,(\pi^\ell)$ and   
\begin{eqnarray*}
\psi(f( \lambda,x,y))&=&\psi(xy \lambda (\tilde \beta- \lambda ) -\tilde \alpha \lambda y^2 + \lambda ( x^2-1) )\\
&=& \psi(\pi^{i}u( 2\pi^{r-\ee-i-1}(wu)^{-1}+\pi^{2r-2\ee-2i-2}(wu)^{-2}) )\\
& =& \psi(\pi^{r-1}) \neq 1. 
\end{eqnarray*}
%
%
	Therefore $(x,y ) \in E_{\lambda, \tilde A}\setminus E_{\lambda, \tilde A}^{\circ}.$  This completes the proof of Theorem~\ref{S_A-in-number}.
\end{proof}

\section{Proof of Proposition~\ref{prop:possible-valuations-of-lambda}, Corollary~\ref{cor:trivial action} and Theorem~\ref{thm:simplification of extension conditions}}\label{sec:proof of thm:simplification of extension conditions}
In this section, we first prove Proposition~\ref{prop:possible-valuations-of-lambda} and Corollary~\ref{cor:trivial action}. We next prove a few results required for the proof of Theorem~\ref{thm:simplification of extension conditions}.  At last, we prove Theorem~\ref{thm:simplification of extension conditions}. 
\begin{proof}[\bf{Proof of Proposition~\ref{prop:possible-valuations-of-lambda}}]
	
	Let $\lambda \in  \mathrm{h}_{\tilde{A}}^\ell \setminus \pi^{\ell'}\cO_r $ such that $2\bol j_\lambda + \bol i_\lambda = 2 \ell' +\bol  s - \epsilon.$ Then we have the following observations: 
	\begin{enumerate}[(a)]
		\item For $\bol i_\lambda< \bol k,$ by Lemma~\ref{i-j-k-s-lem}(2), we obtain that $ \bol j_\lambda =\bol  i_\lambda$.
		Therefore $\bol  i_\lambda = \frac{2\ell' +\bol  s -\epsilon}{3}.$ This forces that $ \frac{2\ell' +\bol  s -\epsilon}{3} $ is an integer and $ 2\ell' +\bol  s -\epsilon< 3\bol k.$ 
		\item For $ \bol i_\lambda =\bol  k,$ Lemma~\ref{i-j-k-s-lem}(4) gives $\bol j_\lambda \geq \bol k$. Therefore $\bol j_\lambda = \frac{2\ell' +\bol  s -\epsilon- \bol k}{2}$ must be an integer and $2\ell' + \bol s -\epsilon \geq 3\bol k.$ 
		\item For $\bol i_\lambda > \bol k,$ by Lemma~\ref{i-j-k-s-lem}(2) we obtain $\bol j_\lambda =\bol  k$. Hence $\bol  i_\lambda = 2\ell' +\bol  s -\epsilon-2\bol k.$ Therefore $2\ell' + \bol s -\epsilon=\bol  i_\lambda+2\bol k>3\bol k.$ 
		\item Note that $ 2\ell' + \bol s -\epsilon-2\bol k< \ell'$ if and only if $ 2\bol k-\bol s\geq \ell$ (by using $\ell=\ell'-\epsilon+1$).
		
	\end{enumerate}

	Note that $(1),$ $(2)$  and $(3)$ follows from (a)-(c). To show $(4),$ observe that for $ 2\bol k-\bol s <  \ell,$ we obtain 
	$2\ell'+\bol s -\epsilon=\ell'+(\ell-1)+\bol s \geq \ell' +(2\bol k - \bol s )+\bol s =3 \bol k +(\ell' -\bol k )\geq 3 \bol k.$
	Therefore by $(2),$ $\bol i_\lambda \in \{ \bol k, 2\ell' + \bol s -\epsilon-2\bol k  \}.$	By (d), we obtain that $2\ell' + \bol s -\epsilon-2\bol k\geq \ell'.$ For $\lambda \in  \mathrm{h}_{\tilde{A}}^\ell \setminus \pi^{\ell'}\cO_r $, we have $\bol i_\lambda <\ell'$. Therefore we must have $\bol i_\lambda=\bol k.$
	
	We now prove $(5)$ and $(6)$ together. For this we consider $2 \bol k -\bol s \geq \ell$ and $2 \bol k -\bol s < \ell$ separately. \\
	
	\noindent
	{\bf (Case 1, $2 \bol k -\bol s \geq \ell$):}
	First of all, by using the facts that $2\bol j_\lambda + \bol i_\lambda = 2 \ell' +\bol  s - \epsilon$ and   $\ell=\ell'-\epsilon+1,$ we obtain 
	$$\bol j_\lambda - \bol s - \lceil (\ell-\bol s) /2 \rceil \geq \frac{2\bol j_\lambda -2\bol s - (\ell-\bol s +1 )}{2}=\frac{\ell'-\bol i_\lambda -2}{2}\geq \frac{-1}{2}.$$
	Since both $\bol j_\lambda - \bol s$ and $ \lceil (\ell-\bol s) /2 \rceil $ are integers, we must have $\bol j_\lambda - \bol s \geq \lceil (\ell-\bol s) /2 \rceil $. Hence $(6)$ follows from $(5)$ in this case. 
To show $(5),$ note that by $(3),$ we have $\bol i_\lambda \geq\mathrm{min} \{\bol  k,	\frac{2\ell' +\bol  s -\epsilon}{3} \} .$
Since $\bol i_\lambda$ is an integer, we can even assume that 
$\bol i_\lambda \geq\mathrm{min} \{\bol  k,	\lceil\frac{2\ell' +\bol  s -\epsilon}{3}\rceil \} .$ Therefore to show $(5),$
 it is enough to show that $\lceil (\ell-\bol s) /2 \rceil \geq \ell- \bol k$ and $\lceil (\ell-\bol s) /2 \rceil \geq \ell-	\lceil\frac{2\ell' +\bol  s -\epsilon}{3}\rceil.$ It follows because 
	$\lceil (\ell-\bol s) /2 \rceil -(\ell- \bol k)\geq \frac{2\bol k- \bol s -\ell}{2}\geq 0$ 
	and 
	\begin{eqnarray*}
		\lceil (\ell-\bol s) /2 \rceil - (\ell-\Big\lceil\frac{2\ell' +\bol  s -\epsilon}{3}\Big\rceil) &\geq& \frac{\ell-\bol s} {2} - (\ell-\frac{2\ell' +\bol  s -\epsilon}{3})\\
		&=&\frac{\ell' +3(\ell'-\ell) -\bol s - 2 \epsilon}{6}\\
	&=&\frac{\ell' +3(\epsilon -1) -\bol s - 2 \epsilon}{6}\\
		&=&\frac{\ell'  -\bol s + \epsilon-3}{6}\geq \frac{-4}{6},
	\end{eqnarray*}
	therefore $	\lceil (\ell-\bol s) /2 \rceil - ( \ell-\lceil\frac{2\ell' +\bol  s -\epsilon}{3}\rceil ) \geq 0.$
	Here we used the fact that $\bol s\leq \ell'+1.$\\

	\noindent
	{\bf (Case 2, $2 \bol k -\bol s < \ell$):} First of all observe that from $(4),$ we have $\bol i_\lambda = \bol k$ in this case.
	Therefore to show $(5)$ and $(6),$ it is enough to show that $\ell- \bol k \geq \lceil (\ell-\bol s) /2 \rceil $ and for $\bol s< \bol k,$ $ \bol j_\lambda-\bol s  \geq \ell- \bol k .$ But it follows from 
	$\ell- \bol k - \lceil (\ell-\bol s) /2 \rceil \geq \frac{2\ell -2\bol k-(\ell -\bol s +1)}{2} =\frac{\ell-(2\bol k-\bol s)-1}{2}\geq 0$ and for  $\bol s< \bol k,$ 
	\begin{eqnarray*}
		\bol j_\lambda-\bol s  -( \ell- \bol k) &=&\frac{2\ell' +\bol  s -\epsilon-\bol k}{2} -\bol s  -( \ell- \bol k)\\
		&=&\frac{2(\ell'-\ell)+\bol k -\bol  s -\epsilon}{2} \\
		&=&\frac{\bol k -\bol  s +\epsilon-2}{2} \geq \frac{\epsilon-1}{2}\geq \frac{-1}{2},
	\end{eqnarray*}
therefore $\bol j_\lambda-\bol s  -( \ell- \bol k)\geq 0.$
	This completes the proof of $(5)$ and $(6)$ for this case.
\end{proof}

\begin{proof}[\bf{Proof of Corollary~\ref{cor:trivial action}}]
	Let $\lambda \in  \mathrm{h}_{\tilde{A}}^\ell \setminus \pi^{\ell'}\cO_r $ such that $2\bol j_\lambda + \bol i_\lambda = 2 \ell' +\bol  s - \epsilon .$
	We first claim that $\bol s < \bol k.$ As if $\bol s \geq \bol k,$ then our assumption $2\bol k -\bol s \geq \ell$ implies 
	$$  \ell \leq 2\bol k -\bol s \leq \bol k \leq \ell' \leq \ell.$$
	Therefore we must have $\ell = \ell'=\bol k =\bol s.$ Since $r=\ell + \ell'=2\ell,$  we have $\epsilon =1.$ 
	Observe that $2 \ell' +\bol  s - \epsilon=3 \ell -1<3\ell=3\bol k.$ Therefore by Proposition~\ref{prop:possible-valuations-of-lambda}(1), we must have $\bol i_\lambda=\frac{2 \ell' +\bol  s - \epsilon}{3}=\frac{3 \ell -1}{3}.$ This is not possible because $\frac{3 \ell -1}{3}$ is not an integer. Hence the claim.
	
	Let  $x I + y \tilde A \in C_{\GL_2(\cO_r)}(\tilde{A})$ such that 
	$\det(x I + y \tilde A) = 1 \,\, \mathrm{mod}\, (\pi^\ell ).$
	By Proposition~\ref{prop:possible-valuations-of-lambda} (3), we have $\bol i_\lambda \geq \mathrm{min}\{\bol k, 	\frac{2\ell'+\bol s-\epsilon}{3}\} .$ 
	Therefore to show $\lambda y = 0 \,\, \mathrm{mod}\, (\pi^\ell )  ,$  it is enough to prove that $ \val(y)+ \mathrm{min}\{\bol k, 	\frac{2\ell'+\bol s-\epsilon}{3}\}> \ell-1 .$ Note that  $\det(x I + y \tilde A) = 1 \,\, \mathrm{mod}\, (\pi^\ell ) $ implies $ x^2 + \tilde \beta xy -\tilde \alpha y^2 = 1 \,\, \mathrm{mod}\, (\pi^\ell ).$ By substituting $\tilde \alpha = w_1^2 + \pi^s w_2^2,$  we get,
	\[
	\pi^{\bol s} w_2^2 y^2 = 1 + x^2 + w_1^2 y^2 + \tilde\beta xy \,\, \mathrm{mod} \, (\pi^\ell).
	\]
	
	Note that  $ 1 + x^2 + w_1^2 y^2$ is a perfect square in $\cO_r.$ This implies $\pi^\bol s w_2^2 y^2$ is also a perfect square modulo $(\pi^{\mathrm{min}\{\val( \tilde\beta xy),\ell\}}).$
	But the valuation of $\pi^\bol s w_2^2 y^2$  is odd. Hence we must have $\val(\pi^\bol s w_2^2 y^2)\geq \mathrm{min}\{\val( \tilde\beta xy),\ell\}.$
	Since $\bol s < \bol k,$  from Lemma~\ref{defintion-of-s-and-w1}, we have $w_2\in \cO_r^\times.$ 
	Therefore 
	$$2\, \val(y) + \bol s =\val(\pi^\bol s w_2^2 y^2) \geq \mathrm{min}\{\val( \tilde\beta xy),\ell\}\geq \mathrm{min}\{\bol k+\val(y),\ell\}.$$
	This implies  $\val(y)\geq \mathrm{min}\{\bol k-\bol s, \lceil\frac{\ell-\bol s}{2} \rceil  \}.$ Therefore to show $ \val(y)+ \mathrm{min}\{\bol k, 	\frac{2\ell'+\bol s-\epsilon}{3}\} > \ell-1 ,$ it is enough to show that $(\bol k-\bol s)+\bol k > \ell-1,$ $(\bol k-\bol s)+ 	\frac{2\ell'+\bol s-\epsilon}{3} > \ell-1 ,$ $  \lceil\frac{\ell-\bol s}{2} \rceil +\bol k > \ell-1$ and $   \lceil\frac{\ell-\bol s}{2} \rceil + 	\frac{2\ell'+\bol s-\epsilon}{3}> \ell-1.$ 
	All of these inequalities follow from our assumption $  2\bol k-\bol s\geq \ell$ and the facts that $\bol s < \bol k\leq \ell' $ and $\ell'=\ell-1+\epsilon.$ This completes the proof of Corollary~\ref{cor:trivial action}. 
\end{proof}

\begin{lemma}\label{lem:elements in E_lambda, A}
	If $\lambda \in  \mathrm{h}_{\tilde{A}}^\ell \setminus \pi^{\ell'}\cO_r $ such that $2\bol j_\lambda + \bol i_\lambda = 2 \ell' +\bol  s - \epsilon ,$ then
	$$
	E_{\lambda, \tilde{A} } =	\left\{(x, y)\in \cO_r \times \cO_r \mid
	\begin{array}{c}
	\val(y) \geq \mathrm{max}\{\ell- \bol i_\lambda, \ell- \bol k, \lceil \frac{\ell- \bol s}{2}\rceil  \}\\
\mathrm{and}\,\, x = 1+ w_1 y \,\, \mathrm{mod}\, (\pi^{\lceil \frac{\ell}{2}\rceil}) 
	\end{array}\right\}.
	$$
\end{lemma}
\begin{proof}
Note that Proposition~\ref{prop:possible-valuations-of-lambda}(4) implies  that either $\bol i_\lambda \leq \bol k,$ or $\bol i_\lambda> \bol k$ with $ 2\bol k-\bol s\geq \ell.$ So we consider these two cases separately. \\

\noindent	{\bf (Case 1, $\bol i_\lambda \leq \bol k$):} Suppose $(x,y)\in E_{\lambda, \tilde{A}}.$ Since  $\lambda y = 0 \,\,\mathrm{mod} \,(\pi^\ell ),$ we have $\val(y) \geq \ell-\bol i_\lambda .$ Hence $\tilde \beta xy=0 \,\,\mathrm{mod}\,(\pi^\ell),$ because $\val(\tilde\beta xy)\geq \bol k + \val(y)\geq \bol k+\ell -\bol i_\lambda\geq \ell.$ Therefore  $g(x,y) = 1 \,\,\mathrm{mod}\,(\pi^\ell)$ implies $ x^2 + (w_1^2 + \pi^\bol s w_2^2) y^2 = 1 \,\,\mathrm{mod}\,(\pi^\ell).$ This gives $  \pi^\bol s w_2^2y^2 = 1+x^2 +  w_1^2y^2  \,\,\mathrm{mod}\,(\pi^\ell).$  Note that the valuation of $\pi^\bol s w_2^2 y^2$  is odd and 
	$1+ x^2 + w_1^2y^2 \in \cO_r^2.$ So we must have $ \val(\pi^\bol s w_2^2 y^2)\geq \ell.$ Note that for $\bol s < \bol k,$ we have $w_2 \in \cO_r^\times$(by Lemma~\ref{defintion-of-s-and-w1}) and hence $2\val(y)+\bol s=\val(\pi^\bol s w_2^2 y^2)\geq \ell.$ 
Therefore for $\bol s < \bol k,$ we have $ \val(y)\geq  \lceil \frac{\ell-\bol s}{2}\rceil.$ Similarly for $\bol s \geq \bol k,$ we have  $ \val(y)\geq  \lceil \frac{\ell-\bol s}{2}\rceil,$ because 
$$ \val(y)\geq \ell-\bol i_\lambda \geq \ell -\bol k \geq \ell -\bol s \geq   \lceil \frac{\ell-\bol s}{2}\rceil.$$
 Therefore by 
comparing this with $\val(y)\geq \ell- \bol i_\lambda$ and $\bol i_\lambda \leq \bol k,$ we obtain that 
$\val(y) \geq \mathrm{max}\{\ell-\bol i_\lambda,  \lceil \frac{\ell-\bol s}{2}\rceil \}= \mathrm{max}\{\ell-\bol i_\lambda, \ell-\bol k, \lceil \frac{\ell-\bol s}{2}\rceil \}$ in this case. Further, this means $x^2 + w_1^2 y^2 = 1  \,\, \mathrm{mod}\,(\pi^\ell)$ which is equivalent to  $x = 1 + w_1 y \,\, \mathrm{mod}\,(\pi^{\lceil \frac{\ell}{2}\rceil}).$ Converse of this is easily seen to be true. \\

\noindent	{\bf (Case 2, $\bol i_\lambda> \bol k$ with $ 2\bol k-\bol s\geq \ell$):} 
First of all note that $\bol s < \bol k.$ This follows because $ \bol k - \bol s=(2\bol k - \bol s)-\bol k > \ell - \bol i_\lambda>0.$  Therefore by Lemma~\ref{defintion-of-s-and-w1}, we must have $w_2 \in \cO_r^\times.$
Suppose $(x,y)\in E_{\lambda, \tilde{A} }.$  Note that  $\lambda y = 0 \,\,\mathrm{mod} \,(\pi^\ell ) $ gives $\val(y) \geq \ell-\bol i_\lambda.$ Now $g(x, y) = 1 \,  \mathrm{mod} \,(\pi^\ell)$ implies  $ x^2  + w_1^2y^2 + 1 =  \pi^\bol s w_2^2 y^2 +\tilde{ \beta} xy \,\,\mathrm{mod}\,(\pi^\ell).$ Since the valuation of $\pi^\bol s w_2^2 y^2$  is odd and $x^2 + w_1^2y^2+1 \in \cO_r^2,$  we must have  
	\begin{equation}\label{eqn-s-val(y)}
	\bol s+ 2 \,   \val(y) =\val(\pi^\bol s w_2^2 y^2)\geq\mathrm{min}\{  \val(\tilde{ \beta} xy) , \ell \} \geq  \mathrm{min}\{  \val(y) + \bol k, \ell \}.
	\end{equation}
	We claim that  $\mathrm{min}\{  \val(y) + \bol k, \ell \}=\ell.$ Then as in  Case $1$ we obtain that $\val(y) \geq \mathrm{max}\{\ell-\bol i_\lambda,  \lceil \frac{\ell-\bol s}{2}\rceil \}.$ Also, the claim implies $\val(y) \geq  \ell- \bol k.$ Therefore the result follows.
	
	To show  the claim, suppose contrary that $ \val(y) + \bol k< \ell.$ Then by  (\ref{eqn-s-val(y)}) we will get $\bol s+ 2  \val(y) \geq \val(y) + \bol k ,$ which implies  $  \val(y) \geq  \bol k-\bol s .$ This gives 
$  \val(y)+\bol k \geq  2\bol k-\bol s .$ Since  $2\bol k-\bol s \geq \ell,$ we must have $ \val(y)+\bol k \geq \ell.$ This is a contradiction. Hence the claim.
\end{proof}
\begin{proposition}\label{proper z}
 Let $\cO \in \dvrtwoplus$ and $\xi$ be the unique element in $\mathbb F_q^\times$ as in Lemma~\ref{existance-of-zeta}. 
 Let $b,c \in \cO_{ r}^\times$ be such that $\xi  b^2 \neq  c^2 \, \,  \mathrm{mod}\,(\pi^{2m+1})$ for some $2m+1\in [1,r].$ Then there exists $z \in \cO_r$ such that $( bz)_m +(c^2z^2)_{2m}\notin \mathrm{ker}(\boldsymbol{\psi}).$ 
\end{proposition}
\begin{proof} 
	From direct calculations, 
	\begin{equation}\label{mateqn}
	\left[ \begin{matrix}
	(b)_0^2 &(b)_1^2& \cdots&(b)_m^2 \\ (c)_0^2 &(c)_1^2& \cdots&(c)_m^2 \end{matrix}\right] \left[ \begin{matrix}
	(z)_m^2  \\(z)_{m-1}^2  \\ \vdots\\ (z)_0^2 \end{matrix}\right]= \left[ \begin{matrix}
	(bz)_{m}^2  \\(c^2z^2)_{2m} \end{matrix}\right].
	\end{equation}
	If the matrix $ B:=\left[ \begin{matrix}
	(b)_0^2 &(b)_1^2& \cdots&(b)_m^2 \\(c)_0^2 &(c)_1^2& \cdots&(c)_m^2 \end{matrix}\right] \in M_{2\times (m+1)}( \mathbb F_q)$ has rank two, then we can find $z\in \cO_r$ such that $(bz)_{m} +(c^2z^2)_{2m} \notin \mathrm{ker}(\boldsymbol{\psi}).$ Therefore we assume that $B$ has rank either $1$ or $0.$
Since $b,c \in \cO_r^{\times},$ $B$ must have rank $1.$ Hence there exists $\xi' \in \mathbb F_q^\times$ such that $\xi'(b)_i^2=(c)_i^2$ for $0\leq i \leq m.$ This implies 
$\xi' b^2 = c^2 \,\, \mathrm{mod}\,(\pi^{2m+1}).$ For any $z\in \cO_r,$ 
	\begin{equation}
	\label{eq-9.2}
	(bz)_m +\xi'(bz)_{m}^2=( bz)_m +(\xi'b^2z^2)_{2m}=( bz)_m +(c^2z^2)_{2m}. 
		\end{equation}
	We will show the result by proving that $( bz)_m +(c^2z^2)_{2m}\in \mathrm{ker}(\boldsymbol{\psi})$ for all $z\in \cO_r$ implies
	$\xi  b^2 =  c^2 \,\, \mathrm{mod}\, (\pi^{2m+1}).$
	Suppose  $( bz)_m +(c^2z^2)_{2m}\in\mathrm{ ker}(\boldsymbol{\psi})$ for all $z\in \cO_r.$ By $b\in \cO_r^\times$ and  (\ref{eq-9.2}), we can conclude that 
	$  \{ x+ \xi' x^2  \mid x \in \mathbb  F_q\}\subseteq \mathrm{ker}(\boldsymbol{\psi}).$ Since both
	$  \{ x+ \xi' x^2  \mid x \in \mathbb  F_q\}$ and $ \mathrm{ker}(\boldsymbol{\psi})$
are $(n-1)$-dimensional subspaces (over $\mathbb F_2$) of $\mathbb F_q,$ where $n = [ \mathbb F_q : \mathbb F_2].$ We have $  \{ x+ \xi' x^2  \mid x \in \mathbb F_q\}=\mathrm{ker}(\boldsymbol{\psi}).$ But $\xi$ is the unique element in $\mathbb F_q^\times$ which satisfies $\mathrm{ker}(\boldsymbol{\psi}) =  \{ x+ \xi x^2  \mid x \in \mathbb F_q\}$. Therefore  $\xi'=\xi,$ hence $\xi b^2 = c^2 \, \,  \mathrm{mod}\, (\pi^{2m+1}).$
\end{proof}

\begin{proof} [{\bf Proof of  Theorem~\ref{thm:simplification of extension conditions} }]
By Theorem~\ref{thm:condition for extension}, $ \lambda \in  \mathrm{h}_{\tilde{A}}^\ell$ satisfies $\lambda \in E_{\tilde{A}} $ if and only if  $ E_{\lambda, \tilde{A}}= E_{\lambda, \tilde{A}}^\circ .$
By definition of $ E_{\lambda, \tilde{A}}$ and $ E_{\lambda, \tilde{A}}^\circ ,$ we have $ E_{\lambda, \tilde{A}}^\circ\subseteq E_{\lambda, \tilde{A}} .$
We prove our result by showing that $ \lambda \in  \mathrm{h}_{\tilde{A}}^\ell \setminus \pi^{\ell'}\cO_r $ satisfies $ E_{\lambda, \tilde{A}}\subseteq E_{\lambda, \tilde{A}}^\circ $ if and only if $\lambda$ satisfies conditions \textit{(I)}, \textit{(II)} and \textit{(III)} of Theorem~\ref{thm:simplification of extension conditions}. 
We first show the converse. 
Let $\lambda \in  \mathrm{h}_{\tilde{A}}^\ell \setminus \pi^{\ell'}\cO_r $ be such that $\lambda $  satisfies 
	\textit{(I)}, \textit{(II)} and \textit{(III)}. 
From Corollary~\ref{cor:E_tilde A remark}(3), observe that 
$f(\lambda + z, x, y ) = f(\lambda , x, y)$ for all $(x,y)\in E_{\lambda, \tilde{A}}$ and $z\in \pi^{\ell'}\cO_r.$ Since $\lambda \in \pi^{\ell- \ell'}\cO_{r}^2 \,\, \mathrm{mod}\,(\pi^{\ell'}),$  as far as the value of $f(\lambda, x, y) $ is concerned, we can even assume $\lambda \in \pi^{\ell-\ell'}\cO_{r}^2.$
 We show $ E_{\lambda, \tilde{A}}\subseteq E_{\lambda, \tilde{A}}^\circ $ for   $\bol s \geq \bol k$ and $\bol s < \bol k$ separately. Recall that  since $\cO \in \dvrtwoplus,$ any $z\in \cO_r$ has a unique expression of the form $z = (z)_0 + (z)_1 \pi + \cdots + (z)_{r-1} \pi^{r-1}$ for some $(z)_i \in \cO/\wp $
\\

\noindent
	{\bf (Case 1, $\bol s \geq \bol k$):} We first show that $\bol j_\lambda = \ell'$ and $\bol k = \bol i_\lambda.$
Note that by Lemma~\ref{i-j-k-s-lem}(3) and the fact that $\bol i_\lambda < \ell',$  $\bol j_\lambda = \ell'$ implies $\bol k = \bol i_\lambda.$   
To show $\bol j_\lambda = \ell',$ suppose the contrary that $\bol j_\lambda \leq \ell'-1.$
Since $\bol k\leq \bol s,$ we obtain 
$$ 2\ell' + \bol k-\epsilon \leq 2\ell' + \bol s-\epsilon = 2\bol j_\lambda + \bol i_\lambda \leq 2\ell' -2 +\bol i_\lambda . $$
%
%
This implies $\bol k \leq \bol i_\lambda-2+\epsilon<\bol i_\lambda.$ Hence by Lemma~\ref{i-j-k-s-lem}(2), we obtain   $  \bol j_\lambda=\bol k.$ 
 Therefore we have  $2\bol k + \bol i_\lambda = 2\ell' + \bol s-\epsilon.$ This together with $\bol k <\bol i_\lambda< \ell'$ implies  $\epsilon=(\ell'-  \bol i_\lambda)+(\ell'-  \bol k)+ (\bol s-  \bol k) \geq 2.$ 
But this is a contradiction to the fact that $\epsilon\in \{0, 1\}.$ 
Therefore $\bol j_\lambda = \ell'.$
	
	To show $E_{\lambda, \tilde{A}}\subseteq E_{\lambda, \tilde{A}}^\circ,$
	let $(x,y)\in E_{\lambda, \tilde{A}} .$ 
Recall that $f(\lambda, x, y)=xy \lambda (\tilde \beta- \lambda ) -\tilde \alpha \lambda y^2 + \lambda ( x^2-1).$  Since $2=0$ and $\tilde \alpha=w_1^2 + \pi^\bol s w_2^2, $  we obtain 
%
%
%
%
	\[
	f(\lambda, x, y) = xy \lambda(\lambda + \tilde \beta) + (w_1^2 + \pi^\bol s w_2^2 ) \lambda y^2 + \lambda(1 + x^2). 
	\] 
 By Lemma~\ref{lem:elements in E_lambda, A}, we have 
$\val(y) \geq \mathrm{max}\{\ell- \bol i_\lambda, \ell- \bol k, \lceil \frac{\ell- \bol s}{2}\rceil  \}$ and $x = 1+ w_1 y \,\, \mathrm{mod}\, (\pi^{\lceil \frac{\ell}{2}\rceil}) .$ 
Since $\bol j_\lambda = \ell',$ $ \bol k =  \bol i_\lambda $ and $\bol s\geq \bol k,$ we obtain that 
$xy \lambda(\lambda + \tilde \beta) = 0$ and $\pi^\bol s w_2^2 \lambda y^2 = 0.$ Because $\lambda \in \pi^{\ell-\ell'}\cO_{r}^2,$ we have $f(\lambda, x, y)= \lambda  w_1^2 y^2 +\lambda(1 + x^2)\in \pi^{\ell-\ell'} \cO_{r}^2.$
 We claim that for any element $v \in  \pi^{\ell-\ell'} \cO_{r}^2,$  $(v)_{r-1}=0.$
Therefore by definition of $\psi,$ we have $\psi(f(\lambda, x, y))=1$ and hence $E_{\lambda, \tilde{A}}\subseteq E_{\lambda, \tilde{A}}^\circ.$ Observe that the claim can be easily shown by using the fact that $r-1=\ell-\ell' +(2\ell'-1).$\\

\noindent
	{\bf (Case 2, $\bol s < \bol k$):} We claim that $\bol  j_\lambda <\ell'$ for this case. For $\bol  j_\lambda = \ell',$ $2\bol j_\lambda + \bol i_\lambda = 2\ell'+ \bol s -\epsilon$ implies $\bol  i_\lambda = \bol s-\epsilon< \bol k.$ Therefore, by
Lemma~\ref{i-j-k-s-lem}(2), we obtain $\bol j_\lambda= \bol  i_\lambda<\ell'.$ This is a contradiction to $\bol j_\lambda=\ell'.$ Hence the claim.
	
To show $E_{\lambda, \tilde{A}}\subseteq E_{\lambda, \tilde{A}}^\circ,$
	let $(x,y)\in E_{\lambda, \tilde{A}} .$ 
	By Lemma~\ref{lem:elements in E_lambda, A},  we have $x=1+w_1y \,\, \mathrm{mod}\, (\pi^{\lceil \frac{\ell}{2}\rceil})$ and $\val(y)\geq \mathrm{max}\{\ell- \bol i_\lambda, \ell- \bol k, \lceil \frac{\ell- \bol s}{2}\rceil  \}.$ Therefore by Lemma~\ref{i-j-k-s-lem}(4), we obtain that
	$\bol i_\lambda +\bol j_\lambda +2\val(y) \geq\bol i_\lambda + 	\mathrm{min} \{ \bol i_\lambda, \bol k\}+2\val(y)\geq 2\ell \geq r.$
	Similarly 
	\begin{eqnarray*} 
	\lceil \frac{\ell}{2}\rceil + \bol i_\lambda + \bol j_\lambda + \val(y) & \geq &  \frac{\ell+ (2\bol i_\lambda +2 \bol j_\lambda) +2 \val(y)}{2} \\  &= &  \frac{\ell+ (2\ell'+\bol s -\epsilon +\bol i_\lambda) +2 \val(y)}{2} \\  &\geq & \frac{\ell+2\ell'+ \bol i_\lambda+\val(y)}{2} \\ &\geq & \frac{\ell+2\ell'+\ell }{2}=r.
	 \end{eqnarray*}
	Hence $\pi^{\lceil \frac{\ell}{2}\rceil}y \lambda(\lambda +\tilde{ \beta})=w_1y^2 \lambda(\lambda +\tilde{ \beta})=0.$  This gives 
$xy \lambda(\lambda + \tilde \beta)=y\lambda(\lambda +\tilde{ \beta}) .$ Therefore
	\[
	f(\lambda, x, y) = y\lambda(\lambda +\tilde{ \beta}) + \pi^\bol s w_2^2 \lambda y^2 + \lambda v^2 ,\,\,\mathrm{where}\,\, v =1+x+w_1 y .
	\]
 Further since $ \lambda v^2 \in \pi^{\ell-\ell'} \cO_r^2,$ as in the above case, we have $(\lambda v^2)_{r-1}=0.$  Therefore 
	$(f(\lambda, x, y))_{r-1}=(y\lambda(\lambda + \tilde \beta) )_{r-1} + (\pi^\bol s w_2^2 \lambda y^2 )_{r-1}.$
Now by definition of $\delta_\lambda$,  we have  $\bol j_\lambda-\bol s-\delta_\lambda = \mathrm{max} \{ \ell-\bol i_\lambda, \ell-\bol k, \lceil (\ell-\bol s) /2 \rceil \}.$ Therefore
$ y = \pi^{\bol j_\lambda-\bol s-\delta_\lambda } z$ for some $z \in \cO_{r}.$ Hence 
	$ y\lambda(\lambda + \tilde{ \beta})  = \pi^{2 \bol j_\lambda+\bol i_\lambda-\bol s-\delta_\lambda }  u_1 u_2 z = \pi^{r-1 - \delta_\lambda} u_1 u_2 z,
	$
 where $u_1,u_2\in \cO_{r}^\times$ such that $\lambda = \pi^{\bol i_\lambda} u_1$
		and $ \lambda + \tilde{\beta} = \pi^{\bol j_\lambda} u_2.$
	Similarly,  
	$
	\pi^\bol s w_2^2 \lambda y^2 = \pi^{ 2\bol j_\lambda+\bol i_\lambda-\bol s-2\delta_\lambda} w_2^2 u_1 z^2 = \pi^{r-1-2\delta_\lambda} u_1w_2^2  z^2. 
	$
If $\delta_\lambda<0,$ then we see that both $\lambda(\lambda + \tilde{ \beta}) y$ and $ 
	\pi^\bol s  w_2^2 \lambda y^2 $ are zero. Therefore $(f(\lambda, x, y))_{r-1}=0$ and hence $\psi(f(\lambda, x, y))=1.$ So in this case, we must have $E_{\lambda, \tilde{A}}\subseteq E_{\lambda, \tilde{A}}^\circ.$

 Now we assume that $\delta_\lambda \geq 0.$ Then 
	$$(f(\lambda, x, y))_{r-1}=(\lambda(\lambda + \beta) y)_{r-1} + (\pi^\bol s  w_2^2 \lambda y^2 )_{r-1}=( u_1 u_2 z)_{\delta_\lambda} + ( u_1w_2^2  z^2)_{2\delta_\lambda}.$$ 
Since $\bol s < \bol k$ (our assumption) and  $\bol  j_\lambda <\ell', $ by condition \textit{(III)}, we have
$\xi u_1^2 u_2^2  =  u_1 w_2^2  \,\, \mathrm{mod} \, (\pi^{2 \delta_\lambda +1}).$ 
This implies
	$$\xi u_1^2 u_2^2 z^2 =  u_1 w_2^2 z^2 \,\, \mathrm{mod} \, (\pi^{2 \delta_\lambda +1}).$$ Therefore 
	$( u_1w_2^2  z^2)_{2\delta_\lambda}=\xi (u_1^2 u_2^2 z^2)_{2\delta_\lambda}=\xi( u_1 u_2 z)_{\delta_\lambda} ^2,$  and hence  
	$$(f(\lambda, x, y))_{r-1}=(u_1 u_2 z)_{\delta_\lambda} + ( u_1w_2^2  z^2)_{2\delta_\lambda}=( u_1 u_2 z)_{\delta_\lambda}+\xi( u_1 u_2 z)_{\delta_\lambda} ^2.$$
	But by the definition of $\xi,$ we obtain that $( u_1 u_2 z)_{\delta_\lambda}+\xi( u_1 u_2 z)_{\delta_\lambda} ^2 \in \mathrm{ker}(\boldsymbol{\psi}).$  Therefore $\psi(f(\lambda, x, y))=\boldsymbol{\psi}((f(\lambda, x, y))_{r-1})=1.$ Hence $E_{\lambda, \tilde{A}}  \subseteq  E_{\lambda, \tilde{A}} ^\circ.$

We next prove that  for $ \lambda \in  \mathrm{h}_{\tilde{A}}^\ell \setminus \pi^{\ell'}\cO_r ,$ $ E_{\lambda, \tilde{A}}\subseteq E_{\lambda, \tilde{A}}^\circ $ implies conditions \textit{(I)}, \textit{(II)} and \textit{(III)} are satisfied. 
We achieve this with the help of Claims 1, 2 and 3.

\begin{claim1}Let $ \lambda \in  \mathrm{h}_{\tilde{A}}^\ell \setminus \pi^{\ell'}\cO_r .$ If  $\lambda$ does not satisfy   \textit{(I)} of Theorem~\ref{thm:simplification of extension conditions}, that is    $ \lambda \notin \pi^{\ell - \ell'}\cO_{r}^2 \, \, \mathrm{mod} \, (\pi^{\ell'}),$ then there exists $(x, y) \in E_{\lambda, \tilde{A} } \setminus E_{\lambda, \tilde{A} }^\circ.$ 
\end{claim1}
\begin{proof}
Let  $\lambda = (\lambda)_0 + (\lambda)_1 \pi + \cdots + (\lambda)_{r-1} \pi^{r-1}$ with $(\lambda)_i \in \mathbb F_q$ be the  unique expression of $\lambda.$ Recall that $\ell-\ell'=1-\epsilon.$
As $ \lambda \notin \pi^{\ell - \ell'}\cO_{r}^2 \, \, \mathrm{mod} \, (\pi^{\ell'}),$ there exists non-negative integer $i$ such that  $ 0 \leq 2i + \epsilon < \ell'$ and  $(\lambda)_{2i + \epsilon} \neq 0.$ Let $\mu \in \mathbb F_q$ be such that $\mu^2=(\lambda)_{2i + \epsilon}^{-1}$. Choose   $x = 1 + \pi^{\ell'-i-\epsilon} \mu $  and $y = 0.$ We now proceed to prove that this choice of $(x,y)$ satisfies $(x,y) \in E_{\lambda, \tilde{A} } \setminus E_{\lambda, \tilde{A} }^\circ.$
 
 \noindent {\bf \underline{$\lambda y = 0 \,\, \mathrm{mod}\,(\pi^\ell) $:}} This is obvious. 
 
 \noindent {\bf \underline{$g(x, y) = 1 \,\, \mathrm{mod}\,(\pi^ \ell)$: }}  For this we note that 
 $g(x, y) =1 +\pi^{2\ell'-2i-2\epsilon}\mu^2 .$ 
 So it remains to show that $2\ell'-2i-2\epsilon \geq \ell.$ But this follows because 
$$2\ell'-2i-2\epsilon=\ell'+(\ell'-2i-\epsilon)-\epsilon \geq\ell'+1-\epsilon =\ell.$$
 
 \noindent{\bf \underline{ $\psi (f(\lambda, x, y))\neq 1$: }} By the definition of $\psi,$ it is enough to show that 
 $(f(\lambda, x, y))_{r-1} =1 .$
 For the given values of $x$ and $y,$
 $	f(\lambda, x, y) =\lambda\pi^{2\ell'-2i-2\epsilon}\mu^2. $
Hence  $(f(\lambda, x, y))_{r-1}=\mu^2(\lambda)_{2i+\epsilon}=1.$
 Therefore $(x,y) \in E_{\lambda, \tilde{A} } \setminus E_{\lambda, \tilde{A} }^\circ.$
\end{proof}	
\begin{claim2}
	Let $ \lambda \in  \mathrm{h}_{\tilde{A}}^\ell \setminus \pi^{\ell'}\cO_r $ be such that $\lambda$ satisfies \textit{(I)}  and not  \textit{(II)} of Theorem~\ref{thm:simplification of extension conditions}. i.e.,
	$$\lambda \in \pi^{\ell - \ell'}\cO_{r}^2 \, \, \mathrm{mod} \,(\pi^{\ell'})\,\, \mathrm{and}\,\, 2\bol j_\lambda + \bol i_\lambda \neq  2 \ell' + \bol s - \epsilon. $$
	Then there exists $(x, y) \in  E_{\lambda, \tilde{A} } \setminus E_{\lambda, \tilde{A}}^\circ.$ 
\end{claim2}
\begin{proof} We first show that  $ \bol i_\lambda  + \epsilon$ is odd.
Since $ \bol i_\lambda <\ell'$ and  $\lambda \in \pi^{\ell - \ell'}\cO_{r}^2 \, \, \mathrm{mod} \,(\pi^{\ell'}),$ we obtain that the number $ \bol i_\lambda -(\ell-\ell')$ is even. This together with $\ell -\ell'=1-\epsilon$  implies
 $ \bol i_\lambda  + \epsilon$ is odd.
	We prove the claim for  $2\bol j_\lambda + \bol i_\lambda < 2 \ell'  + \bol s - \epsilon $ and $2\bol j_\lambda + \bol i_\lambda >  2 \ell'  + \bol s - \epsilon $ separately.\\

\noindent
	{\bf (Case 1: $2\bol j_\lambda + \bol i_\lambda < 2 \ell'  + \bol s - \epsilon $):} 
	For this case, we first show that $ \bol j_\lambda < \ell'.$ By Lemma~\ref{i-j-k-s-lem}(3), $ \bol j_\lambda =\ell' $  implies $ \bol i_\lambda = \bol k.$ Therefore for $ \bol j_\lambda =\ell' ,$ $ 2 \ell' + \bol k=2 \bol j_\lambda+ \bol i_\lambda  <  2 \ell' + \bol s - \epsilon.$ This implies $\bol k + \epsilon < \bol s .$ Note that both $\bol k + \epsilon \, \, (=\bol i_\lambda +\epsilon)$ and $\bol s$ are odd integers.
 Therefore $\bol k + \epsilon < \bol s $ implies  $\bol k + \epsilon \leq \bol s -2.$ This is not possible because by Lemma~\ref{i-j-k-s-lem}(1), we have $\bol s \leq \bol k + 1.$ Therefore we must have $\bol j_\lambda < \ell'.$
	Let $u \in \cO_r^\times$ such that $\lambda(\lambda + \til \beta)= \pi^{\bol i_\lambda+ \bol j_\lambda} u .$ Let $ y = u^{-1}\pi^{r-\bol i_\lambda-\bol j_\lambda-1}$ and  $x = 1+ y w_1 .$ 
 Now we show that this choice of $(x,y)$ is in $E_{\lambda, \tilde{A} } \setminus E_{\lambda, \tilde{A} }^\circ.$
 
 \noindent {\bf \underline{$\lambda y = 0 \,\, \mathrm{mod}\,(\pi^\ell) $:}}
This amounts to proving that $r - \bol j_\lambda -1 \geq \ell $ or equivalent to prove that $\bol j_\lambda \leq \ell' -1,$ which we have already shown above to be true. 
 
 \noindent {\bf \underline{$g(x, y) = 1 \,\, \mathrm{mod}\,(\pi^ \ell)$: }} 
For this we note that 
		$$g(x, y) = 1 +\tilde{ \beta} x u^{-1}\pi^{r-\bol i_\lambda-\bol j_\lambda-1} +w_2^2  u^{-2} \pi^{2(r-\bol i_\lambda-\bol j_\lambda-1) + \bol s}.$$ 
		So it remains to show that $r-\bol i_\lambda-\bol j_\lambda-1+\bol k \geq \ell$ and $2(r-\bol i_\lambda-\bol j_\lambda-1) + \bol s \geq \ell.$ Since $\bol k \geq \min \{\bol i_\lambda, \bol j_\lambda\}$ (by Lemma~\ref{i-j-k-s-lem}(4)), $\bol j_\lambda < \ell'$ and  $ \bol i_\lambda < \ell' ,$  we are done with the first case. For the second we recall that $r  = 2 \ell'+1 - \epsilon$ and therefore we obtain the following.
		\[
		2(r-\bol i_\lambda-\bol j_\lambda-1) + \bol s = (2\ell' + \bol s -\epsilon ) - (2\bol j_\lambda + \bol i_\lambda )-1 + r -\bol i_\lambda  \geq \ell. 
		\]


 \noindent{\bf \underline{ $\psi (f(\lambda, x, y))\neq 1$: }} 
By definition of $\psi,$ it is enough to show that 
		$f(\lambda, x, y) =\pi^{r-1}.$
		For the given values of $x$ and $y,$ we have, 
		\begin{eqnarray*}
			f(\lambda, x, y) &=&(1+w_1 y)y\lambda(\lambda+\tilde{ \beta})+\lambda \pi^\bol s w_2^2 y^2
		\end{eqnarray*}
	Since $\lambda(\lambda+\tilde{ \beta} ) = \pi^{\bol i_\lambda+ \bol j_\lambda}u$ and $y=u^{-1} \pi^{r-\bol i_\lambda-\bol j_\lambda-1} \in \pi \cO_{ r},$ we must have $(1+w_1 y)y\lambda(\lambda+\tilde{ \beta})=\pi^{r-1}.$ Further $2( r - \bol i_\lambda - \bol j_\lambda -1) + \bol i_\lambda + \bol s=r-1+(r-1+\bol s)-(2\bol j_\lambda+\bol i_\lambda)\geq r$ because of our assumption $2\bol j_\lambda + \bol i_\lambda < 2\ell' + \bol s -\epsilon=r-1+\bol s.$ This implies $\lambda \pi^\bol s w_2^2 y^2=0.$ Therefore $	f(\lambda, x, y) =\pi^{r-1}.$
Hence $(x, y) \in  E_{\lambda, \tilde{A}}  \setminus E_{\lambda, \tilde{A}} ^\circ.$\\

	\noindent{\bf (Case 2: $2\bol j_\lambda + \bol i_\lambda > 2 \ell'  + \bol s - \epsilon $):}
	For this case, we first show that $\bol i_\lambda+\epsilon-s\geq 2$ and $\bol s < \bol k.$ Our assumption $2\bol j_\lambda+\bol  i_\lambda > 2\ell' + \bol s -\epsilon$ implies $\bol i_\lambda+\epsilon-\bol s> 2\ell'-2\bol j_\lambda\geq 0.$  Recall that both $\bol i_\lambda+\epsilon$ and  $\bol s$ are odd integers. Therefore we must have $\bol i_\lambda+\epsilon-\bol s\geq 2.$ To show $\bol s < \bol k,$ suppose contrary that $\bol s \geq \bol k.$ Then we have 
$ \bol k \leq \bol s \leq \bol i_\lambda+\epsilon- 2<\bol i_\lambda.$
By Lemma~\ref{i-j-k-s-lem}(2), we obtain $\bol j_\lambda=\bol k.$ Therefore by our assumption, we have $ 2\bol k +\bol i_\lambda\geq 2 \ell'  + \bol s - \epsilon +1 .$ This implies $\bol i_\lambda\geq 2 \ell' -\bol k  + (\bol s-\bol k ) +1- \epsilon\geq 2\ell' -\ell'+1- \epsilon\geq \ell' .$ This contradicts the fact that $\bol i_\lambda<\ell'.$ Therefore $\bol s < \bol k.$

	Let $v\in \cO^\times$ be such that  $\lambda =\pi ^{\bol i_\lambda} v^{2}\, \, \mathrm{mod} \, (\pi^{\ell'}),$ this is possible because $\lambda \in \pi^{\ell - \ell'}\cO_{r}^2 \, \, \mathrm{mod} \,(\pi^{\ell'}).$ Since $\bol s < \bol k,$ from Lemma~\ref{defintion-of-s-and-w1}, we have $w_2\in \cO_r^\times.$  Let $ y =  v^{-1}w_2^{-1}\pi^{\frac{2\ell'-\bol s-\bol i_\lambda-\epsilon}{2}}$ and  $x = 1+ y w_1 .$ 
Now we show that  $(x,y)\in E_{\lambda, \tilde{A} } \setminus E_{\lambda, \tilde{A} }^\circ.$
%
 
 	\noindent{\bf \underline{$\lambda y = 0 \,\, \mathrm{mod}(\pi^\ell)$:}} For this  we need to prove that $\bol i_\lambda +\frac{2\ell'-\bol s-\bol i_\lambda -\epsilon}{2}\geq \ell$
	or equivalent to prove that $\bol i_\lambda-\epsilon - \bol s \geq 2(\ell -\ell').$ The last inequality follows from the facts that $\bol i_\lambda+\epsilon-\bol s\geq 2$ and   $\ell -\ell'=1-\epsilon.$
	
	\noindent{\bf \underline {$g(x, y) = 1 \,\, \mathrm{mod} (\pi^\ell )$:} } For our choice of $(x,y)$ we have, $g (x, y) = 1 + \tilde{ \beta} xy + \pi^\bol s w_2 ^2 y^2.$ First of all we show that $\tilde{ \beta} y \in \pi^\ell\cO_r.$ For that it is enough to show that $ \bol k + \frac{2\ell' - \bol s - \bol i_\lambda- \epsilon} {2} \geq  \ell $ or equivalent to $2 \bol k+2\ell' -\bol s-\bol i_\lambda-\epsilon  \geq2\ell.$ Now we consider the cases $\bol i_\lambda\leq \bol k$ and $\bol i_\lambda > \bol k$ separately. For $ \bol i_\lambda \leq \bol k,$  $\bol i_\lambda+\epsilon-\bol s\geq 2$ implies  $\bol k-\bol s-\epsilon\geq 2-2\epsilon.$ Hence 
	$$2 \bol k+2\ell' -\bol s-\bol i_\lambda-\epsilon =2\ell' +(\bol k-\bol i_\lambda) +(\bol k-\bol s-\epsilon)\geq 2\ell' +2-2\epsilon= 2\ell$$
	For $ \bol i_\lambda > \bol k,$ by Lemma~\ref{i-j-k-s-lem}(2), we have $ \bol j_\lambda = \bol k.$ Therefore by  using our assumption (i.e. $ 2\bol j_\lambda + \bol i_\lambda  >  2\ell' + \bol s - \epsilon$), and the fact that  $\bol i_\lambda< \ell',$ we obtain 
	$$2 \bol k+2\ell' -\bol s-\bol i_\lambda-\epsilon=(2\bol j_\lambda+\bol i_\lambda-\bol s+\epsilon) -2\epsilon+2(\ell'-\bol i_\lambda) > 2\ell'-2\epsilon+2 = 2\ell.$$
	Therefore $\tilde{ \beta} y \in \pi^\ell\cO_r.$
	 Next we proceed to show that $ \pi^\bol s w_2 ^2 y^2 \in \pi^\ell \cO_r.$ This follows because $(2\ell'-\bol s-\bol i_\lambda-\epsilon) + \bol s=(2\ell' -\epsilon)-\bol i_\lambda \geq (r-1)-(\ell'-1)=\ell.$ Therefore $g(x, y) = 1 \,\, \mathrm{mod} (\pi^\ell ).$

	\noindent{\bf \underline{$\psi (f(\lambda, x, y))\neq 1$:} } By definition of $\psi,$ it is enough to show that 
	$f(\lambda, x, y) = \pi^{r-1} .$  For the given values of $x$ and $y,$ we have 
	\[
	f(\lambda, x, y) = (1 + y w_1)y\lambda(\lambda + \til \beta) + \pi^\bol s w_2^2 \lambda y^2.  
	\]
	It is easy to see that $(1 + y w_1)y\lambda(\lambda +\til \beta) = 0 $ because 
	$$\frac{2\ell'-\bol s-\bol i_\lambda-\epsilon}{2} + \bol i_\lambda +\bol j_\lambda =\frac{(2\bol j_\lambda+\bol i_\lambda)+2\ell'-\bol s-\epsilon}{2}> 2\ell'-\epsilon=r-1$$
	Further $\pi^\bol s w_2^2 \lambda y^2 = \pi^{2\ell' -\epsilon}=\pi^{r-1}$ proves the result. 
	Therefore  $(x, y) \in  E_{\lambda, \tilde{A}}  \setminus E_{\lambda, \tilde{A}} ^\circ.$	
\end{proof}

\begin{claim3}	Let $ \lambda \in  \mathrm{h}_{\tilde{A}}^\ell \setminus \pi^{\ell'}\cO_r $ be such that $\lambda$ satisfies
	\textit{(I)}, \textit{(II)} and not \textit{(III)} of Theorem~\ref{thm:simplification of extension conditions}.
 Then  there exists $(x, y) \in E_{\lambda, \tilde{A}}  \setminus E_{\lambda, \tilde{A}} ^\circ.$ 
\end{claim3}
\begin{proof} 
	Since $\lambda$ does not satisfy \textit{(III)} of Theorem~\ref{thm:simplification of extension conditions}, we must have $\bol j_\lambda<\ell',$ $\bol s<\bol k,$  $\delta_\lambda\geq 0$  and $	\xi u_1^2 u_2^2 \neq u_1 w_2^2 \,\, \mathrm{mod} \, (\pi^{2 \delta_\lambda +1}),$ where $u_1,$ $u_2 \in \cO_{r}^\times$ such that $\lambda = \pi^{\bol i_\lambda} u_1$
		and $ \lambda + \tilde{\beta} = \pi^{\bol j_\lambda} u_2.$ 

First of all we show that $ u_1 \in \cO_r^2\, \, \mathrm{mod} \, (\pi^{2 \delta_\lambda +1}).$ Since $\lambda \in \pi^{\ell - \ell'}\cO_{r}^2 \, \, \mathrm{mod} \, (\pi^{\ell'})$ 
(by \text{(I)}), it is enough to show that $(2 \delta_\lambda + 1)\leq (\ell'-\bol i_\lambda) .$ Now by definition of $\delta_\lambda,$ we have $2 \delta_\lambda\leq2(\bol j_\lambda -\bol s)-  (\ell-\bol s).$ Also, $\lambda $ satisfies \textit{(II)} implies
  $2\bol j_\lambda+\bol i_\lambda=2\ell'+\bol s-\epsilon=\ell'+\ell+\bol s-1.$ Therefore  
	$$(\ell'-\bol i_\lambda) - (2 \delta_\lambda + 1) \geq (\ell'-\bol i_\lambda) - 2(\bol j_\lambda-\bol s) + (\ell-\bol s) - 1 = 0.$$
Hence $(2 \delta_\lambda + 1)\leq (\ell'-\bol i_\lambda) .$

	By definition of $\delta_\lambda,$ we have
	\begin{eqnarray}
	\label{eq:j-s-h} 
	\bol j_\lambda-\bol s-\delta_\lambda =  \mathrm{max} \{ \ell-\bol i_\lambda, \ell-\bol k, \lceil (\ell-\bol s) /2 \rceil \}.
	\end{eqnarray}
	Therefore $\bol j_\lambda-\bol s-\delta_\lambda\geq\ell-\bol i_\lambda>0.$ 
We choose $y = z\pi^{\bol j_\lambda-\bol s-\delta_\lambda}$ and $x = 1+ yw_1$ 
and show that there exists a choice of $z \in \cO_r$ such that the corresponding pair $(x,y)$ satisfies $(x,y) \in E_{\lambda, \tilde{A}}  \setminus E_{\lambda, \tilde{A}} ^\circ.$ 
	
	\noindent {\bf \underline{$\lambda y = 0 \,\, \mathrm{mod}\,(\pi^\ell) $:}} This follows from (\ref{eq:j-s-h}).

	\noindent {\bf \underline{$g(x, y) = 1 \,\, \mathrm{mod}\,(\pi^\ell)$:} }  For the given $(x,y),$  we note that 
	$$g(x, y) = 1 + \tilde{ \beta}xz \pi^{\bol j_\lambda-\bol s-\delta_\lambda} + w_2^2 z^2\pi^{2(\bol j_\lambda-\bol s-\delta_\lambda) + \bol s}.$$
By using (\ref{eq:j-s-h}), it is easy to show that 
$ \bol k +\bol j_\lambda-\bol s-\delta_\lambda\geq \ell$ and $2(\bol j_\lambda-\bol s-\delta_\lambda)+\bol s\geq\ell.$ 
Therefore $g(x, y) = 1 \,\, \mathrm{mod}\,(\pi^\ell).$ 
	
	\noindent {\bf \underline{$\psi (f(\lambda, x, y))\neq 1$:} } For the given values of $x$ and $y,$ we have, 
	\[
	f(\lambda, x, y) = (1 + y w_1) y\lambda (\lambda + \tilde{ \beta}) + \pi^\bol s w_2^2 \lambda y^2 .
	\]
	By Lemma~\ref{i-j-k-s-lem}(4), we have $\bol j_\lambda\geq \mathrm{ Min}\{\bol i_\lambda,\bol k\}.$ Therefore by  (\ref{eq:j-s-h}),	
$2(\bol j_\lambda-\bol s-\delta_\lambda)+\bol i_\lambda+\bol j_\lambda\geq r.$ Hence
$ y^2 w_1 \lambda (\lambda + \tilde{ \beta}) = 0.$ By substituting $y,$ $\lambda,$ $\lambda + \tilde{ \beta}$ and using  $2\bol j_\lambda+ \bol i_\lambda = 2\ell'+\bol s-\epsilon=r+\bol s-1,$ we obtain $f(\lambda, x, y) = \pi^{(r-1) - \delta_\lambda} u_1 u_2 z + \pi^{(r-1)-2\delta_\lambda} w_2^2 u_1 z^2.$ 
Therefore 
	$$(f(\lambda, x, y) )_{r-1}= ( u_1 u_2 z)_{\delta_\lambda} + ( u_1 w_2^2 z^2)_{2 \delta_\lambda}.$$

Let $b= u_1 u_2$ and $c\in  \cO_r^\times$ be such that  $c^2= u_1 w_2^2 \,\, \mathrm{mod}\,(\pi^{2\delta_\lambda+1}),$ which is possible because $ u_1 \in \cO_r^2\, \, \mathrm{mod} \,(\pi^{2 \delta_\lambda +1}).$ 
Now observe that $(f(\lambda, x, y) )_{r-1}= (b z)_{\delta_\lambda} + ( c^2 z^2)_{2 \delta_\lambda}.$ 
Since $	\xi u_1^2 u_2^2 \neq u_1 w_2^2 \,\, \mathrm{mod} \, (\pi^{2 \delta_\lambda +1}),$ we have
 $	\xi b^2 \neq c^2 \,\, \mathrm{mod} \,(\pi^{2 \delta_\lambda +1}).$ 
Therefore by Proposition~\ref{proper z}, there exists $z \in \cO_r$ such that  $( b z)_{\delta_\lambda} + ( c^2 z^2)_{2 \delta_\lambda}\notin \mathrm{ker}(\boldsymbol{\psi}).$ Hence for $(x,y)$ corresponding to  that choice of $z \in \cO_r,$ 
$$\psi (f(\lambda, x, y))=\boldsymbol{\psi} ( (f(\lambda, x, y))_{r-1})=\boldsymbol{\psi} ((b z)_{\delta_\lambda} + ( c^2 z^2)_{2 \delta_\lambda})\neq 1.$$
\end{proof} 
  This completes the proof of Theorem~\ref{thm:simplification of extension conditions}.
\end{proof} 
\section{Proof of Theorem~\ref{thm:quotient_abelian}}\label{sec:proof of thm:quotient_abelian}
In this section, we give a proof of Theorem~\ref{thm:quotient_abelian}.
Throughout this section,we assume $r=2\ell.$  
\begin{lemma}\label{lem:E_A in single Lambda}
	If $E_{\tilde A}\subseteq\{0, \lambda \} +  \pi^{\ell}\cO_{2\ell}$ for some $\lambda \in \mathrm{h}_{\tilde{A}}^{\ell},$ then  Theorem~\ref{thm:quotient_abelian} follows.
\end{lemma}
\begin{proof}
	Let $E_{\tilde A}\subseteq\{0, \lambda \} + \pi^{\ell}\cO_{2\ell}$ for some $\lambda \in \mathrm{h}_{\tilde{A}}^{\ell}.$
By Corollary~\ref{cor:E_tilde A remark}, we obtain that $E_{\tilde A}$ is either $\pi^{\ell}\cO_{2\ell}$ or  $\{0, \lambda \} + \pi^{\ell}\cO_{2\ell}.$ Since $2\lambda \in \pi^{\ell}\cO_{2\ell},$ both $\pi^{\ell}\cO_{2\ell}$ and $\{0, \lambda \} + \pi^{\ell}\cO_{2\ell}$ are additive subgroups of $\mathrm{h}_{\tilde{A}}^{\ell}.$ Also, we have
	 $[E_{\tilde A}: \pi^{\ell}\cO_{2\ell}]\leq 2.$
	Therefore $(1)$ follows.
We note that $C_{\SL_2(\cO_{2\ell})} (\psi_{A})\mathbb E_{\tilde A}$ is either $C_{\SL_2(\cO_{2\ell})} (\psi_{A})$ or $C_{\SL_2(\cO_{2\ell})} (\psi_{A})\langle e_\lambda\rangle.$ Therefore  $(2)$ follows from the definition of $\mathbb E_{\tilde A}.$ We obtain  $(3)$ and $(4)$ from the facts that $C_{\SL_2(\cO_{2\ell})} (\psi_{A})/K^\ell$ is abelian and $[C_{\SL_2(\cO_{2\ell})} (\psi_{A})\mathbb E_{\tilde A}:C_{\SL_2(\cO_{2\ell})} (\psi_{A})]\leq 2.$
\end{proof}

\begin{lemma}\label{action in abelian}

Assume	 $\cO \in \dvrtwoplus.$  
Let $A \in  M_2(\cO_{\ell})$ be cyclic with $2\bol k -\bol s \geq \ell.$ Then  
$e_\lambda $ for  $\lambda \in E_{\tilde A}$ acts trivially on $C_{\SL_2(\cO_{2\ell})} (\psi_{A})$ modulo $K^\ell.$ 
\end{lemma}
\begin{proof}
Let $\lambda \in E_{\tilde A}$ and $X\in C_{\SL_2(\cO_{2\ell})} (\psi_{A})$. 
To show this lemma, it is enough to prove $[e_\lambda, X]\in K^\ell.$ This is equivalent to prove 
 $[e_\lambda, X] = I \,\, \mathrm{mod}\,(\pi^\ell).$ As $X\in C_{\SL_2(\cO_{2\ell})} (\psi_{A})\subseteq C_{\GL_2(\cO_{2\ell})} (\psi_{A}),$  by Lemmas~\ref{lem:stabilizer-form}(1) and \ref{lem:centralizer-form}(2), there exists $x,y\in \cO_r$ such that $X=xI + y \tilde A \,\, \mathrm{mod}\,(\pi^\ell).$ 
So  $[e_\lambda, X] = [e_\lambda, xI + y \tilde A]  \,\, \mathrm{mod}\,(\pi^\ell).$ 
By Lemma~\ref{centralizer-operations} (2), we have $[e_\lambda, xI + y \tilde A] = I \,\, \mathrm{mod}\,(\pi^\ell)$ if and only $\lambda y \in \pi^\ell\cO_r .$ 
Therefore 
it is enough to show that $\lambda y \in \pi^\ell \cO_r .$ For $\lambda \in E_{\tilde A}\cap \pi^\ell\cO_r , $ this follows trivially. For $\lambda \in  E_{\tilde A}\setminus \pi^\ell\cO_r  ,$ by
 Theorem~\ref{thm:simplification of extension conditions}, we have  $2\bol j_\lambda + \bol i_\lambda = 2 \ell +\bol  s - 1 .$ Also, note that $\det( xI + y \tilde A)=\det(X)=1 \,\, \mathrm{mod}\,(\pi^\ell)$ and $2\bol k -\bol s \geq \ell.$ Therefore, by Corollary~\ref{cor:trivial action}, $\lambda y \in \pi^\ell \cO_r .$ 
\end{proof}

\begin{proof}[{\bf Proof of Theorem~\ref{thm:quotient_abelian}}]

We first assume $\cO \in \dvrtwozero.$ By Corollary~\ref{cor: S_A-in-number}, we have $E_{\tilde A}\subseteq\{0, \lambda \} + \pi^{\ell}\cO_{2\ell}$ for some $\lambda \in \mathrm{h}_{\tilde{A}}^{\ell}.$ In this case, the theorem follows from Lemma~\ref{lem:E_A in single Lambda}. 
Next we assume $\cO \in \dvrtwoplus.$  We prove the theorem for  $2\bol k -\bol s < \ell$ and $2\bol k -\bol s \geq \ell$ separately \\

\noindent
{\bf (Case 1: $2\bol k -\bol s < \ell$):}
In this case, we claim that $E_{\tilde A}\subseteq\{0, \lambda \} + \pi^{\ell}\cO_{2\ell}$ for some $\lambda \in \mathrm{h}_{\tilde{A}}^{\ell}.$ The result then follows from Lemma~\ref{lem:E_A in single Lambda}. 
If $E_{\tilde A}= \pi^{\ell}\cO_{2\ell},$ then the claim follows vacuously. So assume $E_{\tilde A}\neq \pi^{\ell}\cO_{2\ell}.$ Let $\lambda_1,\lambda_2 \in  E_{\tilde A}\setminus \pi^{\ell}\cO_{2\ell}.$ To show the claim, it is enough to show that $\lambda_1=\lambda_2 \,\, \mathrm{mod}\,(\pi^{\ell}).$ By Proposition~\ref{prop:possible-valuations-of-lambda}(4), we have $\bol i_{\lambda_1}=\bol i_{\lambda_2}=\bol k <\ell.$ Therefore $\bol j_{\lambda_1}=\bol j_{\lambda_2}=\frac{ 2\ell+ \bol s -1-\bol k}{2},$ because $\lambda_1$ and $\lambda_2$ satisfies 
\textit{(II)} of Theorem~\ref{thm:simplification of extension conditions}. We denote   $\bol j_{\lambda_1}$ and  $\bol j_{\lambda_2}$ by  $\bol j.$ Let $u\in \cO_{ 2\ell}^\times$ such that $\tilde \beta =\pi^\bol k u.$ For $\bol j=\ell,$ by definition of $\bol j,$ we obtain  $\lambda_1=\lambda_2=\tilde \beta \,\, \mathrm{mod}\,(\pi^{\ell}).$ So we are done in this case.
 Let  $\bol j< \ell$ and  $v_1,v_2 \in \cO_{ 2\ell}^{\times}$ be such that $\lambda_1=\tilde \beta + \pi^{\bol j}v_1 $ and $\lambda_2=\tilde \beta + \pi^{\bol j}v_2 .$ Further $ \bol j = \frac{ 2\ell+ \bol s -1-\bol k}{2}$ gives $\bol s=\bol k + 2(\bol j -\ell)+ 1<\bol k.$
Therefore by Proposition~\ref{prop:possible-valuations-of-lambda}(6), we have $\delta_\lambda\geq 0.$ 
Hence by \textit{(III)} of Theorem~\ref{thm:simplification of extension conditions}, $v_1$ and  $v_2$ must satisfy 
\begin{equation*}
\xi ( u +\pi^{\bol j -\bol k} v_i )v_i ^2 =w_2^2 \,\, \mathrm{mod}\,(\pi^{2\delta_\lambda +1})\,\, \mathrm{for}  \,\, i=1,2.
\end{equation*} 
In particular, we have $	uv_i^2= w_2^2 \xi^{-1} +\pi^{\bol j -\bol k} v_i ^3 \,\, \mathrm{mod}\,(\pi^{2\delta_\lambda +1})$ for $ i=1,2.$  Therefore 
\begin{eqnarray}
u(v_1+v_2)^2 &=&uv_1^2+uv_2^2   \label{eqn:2k-s<l ; cond-3}\\
&=&\pi^{\bol j-\bol k}(v_1^3+v_2^3)\nonumber \\
&=&\pi^{\bol j-\bol k}(v_1+v_2)(v_1^2+v_1v_2+v_2^2) \,\, \mathrm{mod}\,(\pi^{2\delta_\lambda +1}).\nonumber 
\end{eqnarray}

Note that $\val(u(v_1+v_2)^2)=2 \, \val(v_1+v_2).$ If $2 \, \val(v_1+v_2)<2\delta_\lambda+1,$ then by comparing the valuations of both sides of  (\ref{eqn:2k-s<l ; cond-3}), we must have $ \val(v_1+v_2)\geq \bol j -\bol k.$ In such case, since $2\bol j -\bol k= 2\ell-(2\bol k- \bol s) -1\geq \ell,$ we obtain
$\pi^\bol j (v_1+ v_2)=0 \,\, \mathrm{mod}\,(\pi^\ell).$ Hence $\lambda_1=\lambda_2\,\, \mathrm{mod}\,(\pi^{\ell}).$ 
If $2 \val(v_1+v_2)\geq 2\delta_\lambda+1,$ then  we have $\val(v_1+v_2)\geq \delta_\lambda+1.$  So
$$\bol j+\delta_\lambda+1=\bol j+(\bol j-\bol s-(\ell-\bol k))+1=2
\bol j-\bol s-\ell+\bol k+1=\ell,$$ 
where the first equality is by the definition of $\delta_\lambda$ and  Proposition~\ref{prop:possible-valuations-of-lambda} (5). Therefore $\pi^\bol j (v_1+ v_2)=0 \,\, \mathrm{mod}\,(\pi^\ell).$ Hence $\lambda_1=\lambda_2\,\, \mathrm{mod}\,(\pi^{\ell}).$\\

\noindent
{\bf (Case 2: $2\bol k -\bol s \geq \ell$):}
Let $\langle \mathbb{E}_{\tilde A}\rangle$ be the subgroup of $\mathrm{H}_{\tilde{A}}^{\ell}$ generated by $\mathbb{E}_{\tilde A}.$ First of all we claim that  each  $\chi\in  \mathrm{Irr}(C_{\SL_2(\cO_{2\ell})} (\psi_{A})\mid  \psi_{[A]})$ extends to $C_{\SL_2(\cO_{2\ell})} (\psi_{A})\langle \mathbb{E}_{\tilde A}\rangle.$ Note that $\langle \mathbb{E}_{\tilde A}\rangle$ is abelian. Therefore  by lemma~\ref{diamond-lemma},
to show the claim, it is enough to show that  $\chi^{e_\lambda}=\chi$ for all $\chi\in  \mathrm{Irr}(C_{\SL_2(\cO_{2\ell})} (\psi_{A})\mid  \psi_{[A]})$, and for all  $e_\lambda \in \mathbb E_{\tilde A}.$ By Lemma~\ref{action in abelian},  $[e_\lambda,X]\in K^\ell$  for all $e_\lambda \in \mathbb E_{\tilde A}$, and for all $ X\in C_{\SL_2(\cO_{2\ell})} (\psi_{A}).$
By definition, we have $\chi|_{K^\ell}=\psi_{[A]}$ for  $\chi\in  \mathrm{Irr}(C_{\SL_2(\cO_{2\ell})} (\psi_{A})\mid  \psi_{[A]}).$  This combined with  Theorem~\ref{thm:condition for extension} gives
$ \chi([e_\lambda,X])=\psi_{[A]}([e_\lambda,X])=1.$
Hence the claim follows. Therefore we obtain
the following results:
\begin{itemize}
	\item[(a)] The one-dimensional representation $ \psi_{[A]}$ extends to $C_{\SL_2(\cO_{2\ell})} (\psi_{A})\langle \mathbb{E}_{\tilde A}\rangle$ and  $C_{\SL_2(\cO_{2\ell})} (\psi_{A})\langle \mathbb{E}_{\tilde A}\rangle/K^\ell$ is abelian.
	\item[(b)] Every $g\in \langle \mathbb{E}_{\tilde A}\rangle$ is of the form $e_\lambda$ for some $\lambda \in \mathrm{h}_{\tilde{A}}^\ell.$ Note that $ \psi_{[A]}$ extends to $C_{\SL_2(\cO_{2\ell})} (\psi_{A})\langle g\rangle\subseteq C_{\SL_2(\cO_{2\ell})} (\psi_{A})\langle \mathbb{E}_{\tilde A}\rangle.$ Therefore by definition of $\mathbb{E}_{\tilde A},$ we obtain $g \in \mathbb{E}_{\tilde A}.$ Summing up $\langle \mathbb{E}_{\tilde A}\rangle=\mathbb{E}_{\tilde A}.$
	
\end{itemize}
Thus, except 
$[E_{\tilde A}: \pi^{\ell}\cO_{2\ell}]\leq 4$,  Theorem~\ref{thm:quotient_abelian} follows in this case.  We now proceed to prove $[E_{\tilde A}: \pi^{\ell}\cO_{2\ell}]\leq 4$.
For $E_{\tilde A} =\pi^\ell\cO_r ,$ it follows trivially. For  $E_{\tilde A} \neq \pi^\ell\cO_r ,$ we prove the result for  $2\ell+ \bol s -1\leq 3 \bol k$ and $2\ell+ \bol s -1>3 \bol k$ separately. \\

\noindent
{\bf (Sub case 1: $2\ell+ \bol s -1\leq 3 \bol k$):} Let $\lambda \in E_{\tilde A} \setminus \pi^\ell\cO_r .$ Let $d=\frac{ 2\ell+ \bol s -1}{3}.$ Clearly $d\leq \bol k.$ By Proposition~\ref{prop:possible-valuations-of-lambda} (1)-(2), we  obtain 
$\bol i_\lambda=\bol j_\lambda=d.$ Therefore $d$ is an integer and $d<\ell,$ because $ \bol i_\lambda<\ell.$ 
We claim that $\bol s<\bol k$ in this case.  For $\bol s \geq \bol k,$  our assumption $2\bol k -\bol s \geq \ell$ gives 
$\bol s\geq \ell$. This implies 
$d= \frac{ 2\ell+ \bol s -1}{3}\geq \ell- \frac{1}{3}$ and therefore $d\geq \ell.$ This  contradicts the fact that $d<\ell.$ Therefore  we must have $\bol s<\bol k$. Also, by \textit{(II)} of Theorem~\ref{thm:simplification of extension conditions}, we have $2 \bol j_\lambda+ \bol i_\lambda=2\ell+ \bol s -1.$ Hence by  Proposition~\ref{prop:possible-valuations-of-lambda} (6),  $\delta_\lambda \geq 0.$ 
Let $u  \in \cO_{ 2\ell}^\times$ and $v  \in \cO_{ 2\ell}$ be such that $\lambda= \pi^{d}u $ and $\tilde \beta =\pi^\bol k v.$ Let $T(x) = \xi x (x+ \pi^{\bol k-d} v )^2 +w_2^2$. 
By \textit{(III)} of Theorem~\ref{thm:simplification of extension conditions}, 
$T(u) = 0\,\, \mathrm{mod}\,(\pi^{2\delta_\lambda +1})$.
Since a third degree polynomial over a field has at most three distinct solutions, there are at most three possible values for  $u$ modulo $(\pi)$ such that $T(u) =0\,\, \mathrm{mod}\,(\pi).$ Note that $T'(u)=3\xi u^2 +\xi \pi^{2(\bol k-d)} v^2.$ We claim that 
$T'(u)  \neq 0  \,\, \mathrm{mod}\,(\pi).$ Recall that $\bol k\geq d.$
For $\bol k>d,$ the claim follows easily. For $\bol k= d,$ 
by definition of $\bol j_\lambda$ and $\bol j_\lambda = d<\ell,$ we must have $j_\lambda=\val(\lambda+\tilde \beta).$
Therefore  $u+v\neq 0  \,\, \mathrm{mod}\,(\pi).$ Hence $T'(u) =\xi (u  + v)^2 \neq 0  \,\, \mathrm{mod}\,(\pi)$ for $\bol k=d $ also.  
By Hensel's lemma, we obtain that there are at most three possible values for  $u  \in \cO_{ 2\ell}$ such that $T(u)=0\,\, \mathrm{mod}\,(\pi^{2\delta_\lambda +1}).$ This altogether gives that $E_{\tilde A}\setminus\pi^\ell\cO_r $ contains  at most three distinct  elements modulo $(\pi^{d+2\delta_\lambda +1}).$   By definition of $\delta_\lambda,$  our assumption $2\bol k -\bol s \geq \ell$ and Proposition~\ref{prop:possible-valuations-of-lambda} (5), we obtain $\delta_\lambda=d-\bol s-\lceil (\ell-\bol s) /2 \rceil.$ Hence  
$$d+2\delta_\lambda +1=2\ell- \bol s -2\lceil (\ell-\bol s) /2 \rceil =\begin{cases}
\ell&  \mathrm{for}\,\,\ell \,\,\mathrm{odd},\\
\ell-1  &  \mathrm{for}\,\,\ell \,\, \mathrm{even}.
\end{cases}$$
Therefore  $[E_{\tilde A}: \pi^{\ell}\cO_{2\ell}]\leq 4$ for odd $\ell.$  For even $\ell,$ result follows by using $ \lambda \in \cO_{r}^2 \, \, \mathrm{mod} \, (\pi^{\ell})$ (\textit{(I)} of Theorem~\ref{thm:simplification of extension conditions}) and  $\ell-1$ is odd.  \\

\noindent
{\bf (Sub case 2: $2\ell+ \bol s -1> 3 \bol k$):}
For $\lambda \in E_{\tilde A} \setminus \pi^\ell\cO_r ,$ by Proposition~\ref{prop:possible-valuations-of-lambda} (2),  we have  $\bol i_\lambda$ is either $\bol k$ or $2\ell+ \bol s -1-2\bol k.$ Let $d'=2\ell+ \bol s -1-2\bol k.$ Note that $\bol k< d'=2\ell-(2\bol k- \bol s) -1<\ell.$ 
Now we will show that there exists a unique  $\lambda \in E_{\tilde A}\setminus \pi^\ell\cO_r $ modulo $(\pi^\ell)$ with  $\bol i_\lambda=d'.$ For that let $\lambda  \in E_{\tilde A} \setminus \pi^\ell\cO_r $ be such that $\lambda=\pi^{d'}u$ for some $u \in \cO_{ 2\ell}^\times.$ By Lemma~\ref{i-j-k-s-lem}(2) and $\bol k< d'=\bol i_\lambda,$ we must have $\bol j_\lambda=\bol k.$ 
Since $\bol k< d'<\ell,$ we obtain $\bol s=d'+1+2\bol k -2\ell \leq 2\bol k -\ell<\bol k.$
By Proposition~\ref{prop:possible-valuations-of-lambda} (6),  $\delta_\lambda \geq 0.$
Therefore by \textit{(III)} of Theorem~\ref{thm:simplification of extension conditions}, $u$  satisfies
\begin{equation}\label{eqn:uv}
T(u):=\xi u (v+ \pi^{d'-\bol k}u)^2 +w_2^2 =0\,\, \mathrm{mod}\,(\pi^{2\delta_\lambda +1}),
\end{equation} 
where $v\in \cO_{ 2\ell}^\times$ such that  $\tilde \beta =\pi^\bol k v.$ Here $v\in \cO_{ 2\ell}^\times$ because $\bol k <\ell$ implies $\val(\tilde \beta)=\val(\beta)=\bol k.$ 
Since $\bol k< d',$ (\ref{eqn:uv}) implies $\xi u v^2 =w_2^2 \,\, \mathrm{mod}\,(\pi).$ Hence 
$u=\xi^{-1}v^{-2}w_2^2  \,\, \mathrm{mod}\,(\pi).$  Also, $	T'(u)=\xi v^2 \neq 0  \,\, \mathrm{mod}\,(\pi).$ By Hensel's lemma,  we obtain  that $u$ is uniquely determined modulo $(\pi^{2\delta_\lambda +1}).$ 
 By 
Proposition~\ref{prop:possible-valuations-of-lambda} (5), we obtain $\delta_\lambda=\bol k-\bol s-\lceil (\ell-\bol s) /2 \rceil.$ Hence
%
%
$$d'+2\delta_\lambda +1=2\ell- \bol s -2\lceil (\ell-\bol s) /2 \rceil =\begin{cases}
\ell &  \mathrm{for}\,\, \ell\,\, \mathrm{odd},\\
\ell-1 & \mathrm{for}\,\, \ell \,\, \mathrm{even}.
\end{cases}$$
By using the same arguments used in the above case, we can conclude that $\lambda=\pi^{d'}v$ is uniquely determined modulo $(\pi^\ell).$
We proceed to show the existence of such $\lambda \in E_{\tilde A} \setminus \pi^\ell\cO_r .$ Note that $x=\xi^{-1}\bar{v}^{-2}\bar{w_2}^2 $ is the unique solution in $\cO_1$  for $T(x)=0 \,\, \mathrm{mod}\,(\pi)$ in $\cO_1.$ Also, we have 
 $	T'(\xi^{-1}\bar{v}^{-2}\bar{w_2}^2 )=\xi v^2 \neq 0  \,\, \mathrm{mod}\,(\pi).$
  By Hensel's lemma, there exists a unique $ u_{*}\in  \cO_{ 2\ell}^\times$  such that  $T(u_{*})=0 \,\, \mathrm{mod}\,(\pi^{2\ell}).$ It is easy to show that $\lambda_{*}:=\pi^{d'}u_{*}$ satisfies  \textit{(I)}, \textit{(II)} and \textit{(III)} of  Theorem~\ref{thm:simplification of extension conditions}. Hence $\lambda_{*} \in  E_{\tilde A} \setminus \pi^\ell\cO_r .$

For $\lambda_1,\lambda_2 \in  E_{\tilde A} $ such that $\bol i_{\lambda_1}=\bol i_{\lambda_2}=\bol k,$ by  Proposition~\ref{prop:possible-valuations-of-lambda} (2), we have  $\bol j_{\lambda_1}=\bol j_{\lambda_2}=\frac{2\ell+ \bol s -1- \bol k}{2}>\bol k.$ Therefore $\lambda_1=\lambda_2=\tilde \beta \,\, \mathrm{mod}\,(\pi^{\bol k+1}).$ Hence $\lambda_1+\lambda_2=0 \,\, \mathrm{mod}\,(\pi^{\bol k+1}),$ that is, $\val(\lambda_1+\lambda_2)\geq \bol k +1.$ But we know that 
$E_{\tilde A}$ forms an additive group and each $\lambda \in E_{\tilde A}\setminus \pi^\ell\cO_r $ has valuation  either $\bol k$ or $ d'.$ 
Therefore we must have $\lambda_1+\lambda_2 \in \{ 0,\lambda_{*}\} \,\, \mathrm{mod}\,(\pi^{\ell}).$ So  $[ E_{\tilde A}: \pi^{\ell}\cO_{2\ell}]\leq 4.$ 
This completes the proof of Theorem~\ref{thm:quotient_abelian}.
\end{proof}
\section{Examples}\label{sec:Examples}
In this section, we discuss the examples  of $\SL_2(\cO_{2\ell})$ for $\cO = \mathbb Z_2$ and $\mathbb F_2 \llbracket t \rrbracket $ with $1 \leq \ell \leq 3$ in detail. We fix $\pi=2$ and $\pi=t$ for $\cO = \mathbb Z_2$ and  $\cO=\mathbb F_2 \llbracket t \rrbracket $ respectively. We note that $\cO/(\pi^r) \cong \mathbb Z/2^r \mathbb Z$ for $\cO = \mathbb Z_2$ and $\cO/(\pi^r) \cong \mathbb F_2[t]/(t^r)$ for $\cO = \mathbb F_2 \llbracket t \rrbracket $. We will use these isomorphisms and corresponding notations throughout this section. We first discuss a few general results regarding $\SL_2(\cO_{2 \ell})$ representations that do not use the assumption $1\leq \ell \leq 3.$ We will consider specific values of $\ell$ from Section~\ref{subsec:example-1} onwards.

Recall that $\mathbb M_A = C_{\SL_2(\cO_{2\ell})}(\psi_{A}) \mathbb E_{\tilde{A}}$  for $A = \mat 0 {a^{-1} \alpha} a \beta \in M_2(\cO_{\ell}).$
For $i\geq 1,$ let $\Delta_i$ be the set of all equivalence classes of $\{\phi \in \mathrm{Irr}(\mathbb {M}_A\mid  \psi_{[A]}) \mid \dim(\phi)=i \}$
under the conjugation action of $C_{\SL_2(\cO_{2\ell})}(\psi_{[A]}).$
By Theorem~\ref{main_theorem-2}, we have the following observations:

\begin{enumerate}
	\item For $i\geq 3,$ $\Delta_i=\emptyset.$
	\item Every representation $\rho \in  \mathrm{Irr}( \SL_2(\cO_{2\ell})\mid  \psi_{[A]}) $ is either of dimension $\frac{|\SL_2(\cO_{2\ell})|}{|\mathbb M_A|}$ or $2\times \frac{|\SL_2(\cO_{2\ell})|}{|\mathbb M_A|}.$  
	\item There are exactly $|\Delta_1|$ (resp. $|\Delta_2|$) many representations in $ \mathrm{Irr}( \SL_2(\cO_{2\ell})\mid  \psi_{[A]}) $ of dimension $\frac{|\SL_2(\cO_{2\ell})|}{|\mathbb M_A|}$  (resp. $2\times \frac{|\SL_2(\cO_{2\ell})|}{|\mathbb M_A|}$).
\end{enumerate}
For $\lambda \in \mathrm{h}_{\tilde{A}}^{\ell},$ let $\theta_\lambda = |\{ (x,y) \in \cO_\ell \times \cO_\ell : \lambda y =0 \,\, \mathrm{mod}\,(\pi^\ell) \,\, \mathrm{ and }\,\, x^2+\tilde{\beta} x y +\tilde{\alpha} y^2 =1 \,\, \mathrm{mod}\, (\pi^\ell)  \}|.$
\begin{lemma}\label{lem:repres. of CSL/KL}
	If $\mathbb M_A = C_{\SL_2(\cO_{2\ell})} (\psi_{A})\langle e_\lambda\rangle$ for some  $\lambda \in \mathrm{h}_{\tilde{A}}^{\ell}\setminus \pi^\ell\cO_r ,$ then there are exactly $2\theta_\lambda$ (resp. $\frac{|C_{\SL_2(\cO_{\ell})} (A)|-\theta_\lambda}{2}$) many representations in $ \mathrm{Irr}( \mathbb M_A \mid  \psi_{[A]}) $ of dimension one (resp. two).
\end{lemma}	
\begin{proof}
	Observe that every representation in $ \mathrm{Irr}( C_{\SL_2(\cO_{2\ell})} (\psi_{A}) \mid  \psi_{[A]}) $ is a one-dimensional representation. Therefore  by Clifford Theory, to show the result, it is enough to prove
	\begin{equation}\label{eqn:theta=}
	\theta_\lambda=
	|\{\phi \in \mathrm{Irr}( C_{\SL_2(\cO_{2\ell})} (\psi_{A}) \mid  \psi_{[A]})  \mid \phi^{e_\lambda}=\phi \}|.
	\end{equation}	
	The character $ \psi_{[A]}$ extends to $\mathbb M_A = C_{\SL_2(\cO_{2\ell})} (\psi_{A})\langle e_\lambda\rangle.$ Therefore by Clifford theory, the right hand side of (\ref{eqn:theta=}) is equal to 
	$|\{\chi \in \mathrm{Irr}( C_{\SL_2(\cO_{2\ell})} (\psi_{A})/K^\ell )  \mid \chi^{e_\lambda}=\chi \}|.$
From Lemma~\ref{lem:stabilizer-form}(1), we have $C_{\SL_2(\cO_{2\ell})} (\psi_{A})=
(C_{\GL_2(\cO_{2\ell})} (\tilde{A}) M^{\ell})\cap \SL_2(\cO_{2\ell}).$ It is easy to observe that $  C_{\SL_2(\cO_{2\ell})} (\psi_{A})/K^\ell \cong  C_{\SL_2(\cO_{\ell})} (A),$ which is an abelian group.
	Consider the conjugation action of $\langle e_\lambda\rangle$ on $C_{\SL_2(\cO_{2\ell})} (\psi_{A})/K^\ell .$  Observe that  for $\chi \in \mathrm{Irr}( C_{\SL_2(\cO_{2\ell})} (\psi_{A})/K^\ell ),$ 
$$\chi^{g}(X^g)=\chi(X)\,\,\mathrm{ for}\,\,\mathrm{ all }\,\, X \in  C_{\SL_2(\cO_{2\ell})} (\psi_{A})/K^\ell, \,\, \mathrm{and}\,\,\mathrm{ for}\,\,\mathrm{ all }\,\, g \in \langle e_\lambda\rangle.$$
	Therefore by Brauer Theorem (see \cite[Theorem~6.32]{MR2270898}), we obtain that 
	$$|\{\chi \in \mathrm{Irr}( C_{\SL_2(\cO_{2\ell})} (\psi_{A})/K^\ell )  \mid \chi^{e_\lambda}=\chi \}|=|\{X \in  C_{\SL_2(\cO_{2\ell})} (\psi_{A})/K^\ell  \mid X^{e_\lambda}=X \}|.$$
Since   $  C_{\SL_2(\cO_{2\ell})} (\psi_{A})\cong  C_{\SL_2(\cO_{2\ell})} (\tilde{A}) \mod(\pi^\ell),$ we have 
	$$|\{X \in  C_{\SL_2(\cO_{2\ell})} (\psi_{A})/K^\ell  \mid X^{e_\lambda}=X \}|=|\{Z \in  C_{\SL_2(\cO_{2\ell})} (\tilde{A}) \mid 
e_\lambda Ze_\lambda^{-1}=Z 
\,\, \mathrm{mod}\,(\pi^\ell) \}|.$$
By Lemma~\ref{lem:centralizer-form}(2), 
$C_{\SL_2(\cO_{2\ell})} (\tilde{A})=\{x I + y\tilde{A} \mid x,y \in \cO_{2\ell}, \, \det(x I + yA)=x^2+\beta x y +\alpha y^2 =1 \}.$
	For $Z=x I + y\tilde{A} \in  C_{\SL_2(\cO_{2\ell})} (\tilde{A}),$ by Lemma~\ref{centralizer-operations}(2), we have $e_\lambda Ze_\lambda^{-1}=Z \,\, \mathrm{mod}\,(\pi^\ell)$ if and only if $\lambda y =0 \,\, \mathrm{mod}\,(\pi^\ell).$ Therefore 
$|\{Z \in  C_{\SL_2(\cO_{2\ell})} (\tilde{A}) \mid e_\lambda Ze_\lambda^{-1}=Z\,\, \mathrm{mod}\,(\pi^\ell) \}|=\theta_\lambda.$
 This completes the proof of the lemma.
\end{proof}
\begin{lemma}\label{lem:calculation of Delta1 and 2}
	\begin{enumerate}
		\item If $\mathbb M_A = C_{\SL_2(\cO_{2\ell})} (\psi_{A}),$ then 
		$|\Delta_1|=\frac{|C_{\SL_2(\cO_{2\ell})} (\psi_{A})|^2}{|K^\ell|\times |C_{\SL_2(\cO_{2\ell})} (\psi_{[A]})|}$ and $|\Delta_2|=0.$
		\item If $\mathbb M_A = C_{\SL_2(\cO_{2\ell})} (\psi_{A})\langle e_\lambda\rangle$ for some  $\lambda \in \mathrm{h}_{\tilde{A}}^{\ell}\setminus \pi^\ell\cO_{2\ell} ,$ then 	$|\Delta_1|=2\theta_\lambda \times \frac{|\mathbb M_A|}{|C_{\SL_2(\cO_{2\ell})} (\psi_{[A]})|}$ and $|\Delta_2|=\frac{|C_{\SL_2(\cO_{\ell})} (A)|-\theta_\lambda}{2}\times \frac{|\mathbb M_A|}{|C_{\SL_2(\cO_{2\ell})} (\psi_{[A]})|}.$
	\end{enumerate}
\end{lemma}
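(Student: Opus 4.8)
The plan is to treat both parts uniformly by first pinning down the orbit structure of the conjugation action of $G := C_{\SL_2(\cO_{2\ell})}(\psi_{[A]})$ on the set $\mathrm{Irr}(\mathbb M_A \mid \psi_{[A]})$, and only then to feed in the dimension counts. The crucial observation, which is already contained in the proof of Lemma~\ref{lem:induction-irred}, is that for \emph{every} $\phi \in \mathrm{Irr}(\mathbb M_A \mid \psi_{[A]})$ the inertia group $C_G(\phi)$ equals $\mathbb M_A$ itself: that proof shows $\mathrm{Ind}_{\mathbb M_A}^{G}(\phi)$ is irreducible, which by Clifford theory forces $C_G(\phi) = \mathbb M_A$. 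Since $\mathbb M_A$ is normal in $G$ (Theorem~\ref{main-theorem-2}) and conjugation preserves dimension, $G$ permutes $\mathrm{Irr}(\mathbb M_A \mid \psi_{[A]})$ respecting the partition into dimensions, and every orbit has the full length $[G : \mathbb M_A] = |G|/|\mathbb M_A|$. Consequently
\[
|\Delta_i| = \frac{|\{\phi \in \mathrm{Irr}(\mathbb M_A \mid \psi_{[A]}) : \dim(\phi) = i\}|}{[G : \mathbb M_A]} = |\{\phi : \dim(\phi)=i\}| \times \frac{|\mathbb M_A|}{|C_{\SL_2(\cO_{2\ell})}(\psi_{[A]})|},
\]
so everything reduces to counting the representations of $\mathbb M_A$ of each dimension.

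For part (1), where $\mathbb M_A = C_{\SL_2(\cO_{r})}(\psi_A)$, I would use that the quotient $\mathbb M_A/K^\ell$ is abelian (for $r = 2\ell$ this is $C_S^\ell(\tilde A)/K^\ell$, abelian by Lemma~\ref{lem:centralizer-sl}) and that $\psi_{[A]}$ extends to $\mathbb M_A$ (Theorem~\ref{main-theorem-2}(1)). Hence every member of $\mathrm{Irr}(\mathbb M_A \mid \psi_{[A]})$ is one-dimensional, giving $\Delta_2 = \emptyset$ and $|\Delta_2| = 0$ at once. The number of such one-dimensional representations is the number of extensions of $\psi_{[A]}$ from $K^\ell$ to $\mathbb M_A$, namely $|\widehat{\mathbb M_A/K^\ell}| = [\mathbb M_A : K^\ell] = |C_{\SL_2(\cO_r)}(\psi_A)|/|K^\ell|$. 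Dividing by $[G:\mathbb M_A]$ as above produces
\[
|\Delta_1| = \frac{|C_{\SL_2(\cO_r)}(\psi_A)|/|K^\ell|}{|C_{\SL_2(\cO_r)}(\psi_{[A]})|/|C_{\SL_2(\cO_r)}(\psi_A)|} = \frac{|C_{\SL_2(\cO_r)}(\psi_A)|^2}{|K^\ell|\times |C_{\SL_2(\cO_r)}(\psi_{[A]})|},
\]
as claimed.

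For part (2), where $\mathbb M_A = C_{\SL_2(\cO_r)}(\psi_A)\langle e_\lambda\rangle$, the dimension census of $\mathrm{Irr}(\mathbb M_A \mid \psi_{[A]})$ is exactly the content of Lemma~\ref{lem:repres. of CSL/KL}: there are $2\theta_\lambda$ one-dimensional representations and $\frac{|C_{\SL_2(\cO_\ell)}(A)| - \theta_\lambda}{2}$ two-dimensional ones. Substituting these two counts into the displayed identity $|\Delta_i| = |\{\phi : \dim\phi = i\}| \times |\mathbb M_A|/|C_{\SL_2(\cO_{2\ell})}(\psi_{[A]})|$ gives both formulas immediately.

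The only real point requiring care — and the step I expect to be the main obstacle — is the uniform assertion that $C_G(\phi) = \mathbb M_A$ for \emph{every} $\phi$, including the one-dimensional $\phi$ whose stabilizer a priori could be strictly larger than $\mathbb M_A$. This is where the maximality built into the definition of $\mathbb E_{\tilde A}$ is essential: any abelian $\langle e_\mu\rangle$ fixing $\phi$ would let $\phi$, and hence $\psi_{[A]}$, extend to $C_S^\ell(\tilde A)\langle e_\mu\rangle$, forcing $e_\mu \in \mathbb E_{\tilde A}$ and so $C_G(\phi) \subseteq \mathbb M_A$. It is precisely this argument that the proof of Lemma~\ref{lem:induction-irred} already supplies, so for the present lemma I would simply invoke it and carry out the orbit-counting bookkeeping above.
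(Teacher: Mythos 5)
Your proposal is correct and follows essentially the same route as the paper: the paper likewise deduces from the irreducibility of $\mathrm{Ind}_{\mathbb M_A}^{C_{\SL_2(\cO_{2\ell})}(\psi_{[A]})}(\phi)$ (i.e., the content of Lemma~\ref{lem:induction-irred}) that every conjugation orbit has length $[C_{\SL_2(\cO_{2\ell})}(\psi_{[A]}):\mathbb M_A]$, then counts $|\mathrm{Irr}(C_{\SL_2(\cO_r)}(\psi_A)\mid\psi_{[A]})| = |C_{\SL_2(\cO_r)}(\psi_A)|/|K^\ell|$ one-dimensional extensions for part (1) and invokes Lemma~\ref{lem:repres. of CSL/KL} for part (2). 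Your closing discussion of why $C_G(\phi)=\mathbb M_A$ holds uniformly, via the maximality in the definition of $\mathbb E_{\tilde A}$, accurately reflects the mechanism already supplied in the proof of Lemma~\ref{lem:induction-irred}.
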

\begin{proof}
	Let $\phi \in  \mathrm{Irr}( \mathbb M_A\mid  \psi_{[A]}) $. Let $\mathrm{Orb}(\phi) \subseteq  \mathrm{Irr}(\mathbb {M}_A\mid  \psi_{[A]})$ be the orbit of $\phi$ under the conjugation action of $C_{\SL_2(\cO_{2\ell})}(\psi_{[A]})$.   By Theorem~\ref{main_theorem-2}(3), we  have that the induced representation  $\mathrm{Ind}_{\mathbb M_A}^{\SL_2(\cO_{2\ell})} (\phi) $ is irreducible. In particular,  $\mathrm{Ind}_{\mathbb M_A}^{C_{\SL_2(\cO_{2\ell})} (\psi_{[A]})} (\phi) $ is irreducible.  Therefore by Clifford Theory, the stabilizer of $\phi$ in $C_{\SL_2(\cO_{2\ell})} (\psi_{[A]})$ is equal to $\mathbb M_A.$ Hence  $ |\mathrm{Orb}(\phi)|= \frac{|C_{\SL_2(\cO_{2\ell})} (\psi_{[A]})|}{|\mathbb M_A|}.$
Now $(1)$ follows from the fact that every representation in $\mathrm{Irr}(C_{\SL_2(\cO_{2\ell})} (\psi_{A})\mid  \psi_{[A]})$ has dimension one and $|\mathrm{Irr}(C_{\SL_2(\cO_{2\ell})} (\psi_{A})\mid  \psi_{[A]})|=\frac{|C_{\SL_2(\cO_{2\ell})} (\psi_{A})|}{|K^\ell|}$ and $(2)$  follows from Lemma~\ref{lem:repres. of CSL/KL}.
\end{proof}

\subsection{$\SL_2(\cO_{ 2})$ with $\cO = \mathbb{Z}_2 \,\,\mathrm{or}\,\, \mathbb F_2 \llbracket t \rrbracket $}\label{subsec:example-1}
\subsubsection{Orbits} For both $\cO = \mathbb{Z}_2$ and $\cO= \mathbb F_2 \llbracket t \rrbracket ,$  $\cO_1\cong \mathbb F_2.$ Therefore the orbits are same in these cases.
From Section~\ref{invertible-orbits-function-field}, the orbit representatives for $A \in \Sigma$ with $\mathrm{trace}(A)$ invertible are given by the set
$\{ (1,0,1), (1,1,1) \}.$ Recall that we have identified $A = \left[ \begin{matrix}
0 & a^{-1} \alpha \\ a & \beta 
\end{matrix} \right]$ with corresponding tuple $(a, \alpha, \beta).$
Now (\ref{condition-1}), (\ref{condition-2}) and  (\ref{condition-3}) imply  that there is exactly one orbit representative for $A \in \Sigma$ with $\mathrm{trace}(A)=0.$ Choose $(1,0,0)$ as a representative in this case. 
\begin{proposition}\label{prop:example-O2}
	For  $A \in \Sigma,$ the one-dimensional representation $\psi_{[A]}$ extends to $C_{\SL_2(\cO_{2})} (\psi_{[A]}).$ 
\end{proposition}
\begin{proof}
	Let $A=(a,\alpha, \beta) \in \Sigma.$ For   $\beta=0,$
\begin{eqnarray*}
\mathrm{h}_{\tilde{A}}^{1} &=& \{ x \in \cO_{2 } \mid 2x = 0  \,\, \mathrm{mod}\, (\pi), \, \,  x(x+\tilde \beta) = 0 \,\, \mathrm{mod}\, (\pi ) \}\\
&=& \{ x \in \cO_{2 } \mid   x^2 = 0 \,\, \mathrm{mod}\, (\pi ) \}\\
&=&\pi \cO_2.
\end{eqnarray*}
By Lemma~\ref{lem:stabilizer-form}(3), we obtain that 
 $C_{\SL_2(\cO_{2})} (\psi_{[A]})=C_{\SL_2(\cO_{2})} (\psi_{A}). $
Therefore in this case, the result follows by Lemma \ref{lem:centralizer-sl}.
	For   $\beta$ invertible, we have $\mathrm{h}_{\tilde{A}}^{1}=\{0,\tilde \beta\}+\pi \cO_2,$	where $\tilde \beta$ is a fixed lift of $\beta.$ By Lemma~\ref{lem:stabilizer-form}(3), 
$C_{\SL_2(\cO_{2})} (\psi_{[A]})=C_{\SL_2(\cO_{2})} (\psi_{A})\langle e_{\tilde \beta}\rangle. $
Therefore to show the result, it is enough to show that $E_{\tilde \beta, \tilde A}=E_{\tilde \beta, \tilde A}^\circ$ (by Theorem~\ref{thm:condition for extension}).
	Let $(x,y)\in E_{\tilde \beta, \tilde A}$. By definition of $E_{\tilde \beta, \tilde A},$ $x=1 \,\, \mathrm{mod}\,(\pi)$ and $y=0 \,\, \mathrm{mod}\,(\pi).$ 
This implies  $x^2=1$ and $y^2=0.$ 
Therefore
$$\psi(f(\tilde \beta,x,y))=\psi(xy\tilde \beta (\tilde \beta- \tilde \beta) -\tilde \alpha \tilde \beta y^2 +\tilde \beta( x^2-1) ) =\psi(0)= 1 .$$
 Hence $E_{\tilde \beta, \tilde A}\subseteq E_{\tilde \beta, \tilde A}^\circ.$ By definition we have $E_{\tilde \beta, \tilde A}^\circ \subseteq E_{\tilde \beta, \tilde A}.$ Therefore $E_{\tilde \beta, \tilde A}=E_{\tilde \beta, \tilde A}^\circ.$
\end{proof}

Note that we have not used Theorem~\ref{main_theorem-2} to prove Proposition~\ref{prop:example-O2}.
Recall that Theorem~\ref{main_theorem-2} was proved for $r \geq R_\cO.$  In particular Theorem~\ref{main_theorem-2} was proved  for $\cO = \mathbb Z_2$ with $r \geq 4.$ However, for $\mathbb Z_2$ with $r =2,$ we have  seen that 
$$\mathbb M_A=C_{\SL_2(\cO_{2})} (\psi_{[A]})=\begin{cases}
C_{\SL_2(\cO_{2})} (\psi_{A})& \beta =0,\\
C_{\SL_2(\cO_{2})} (\psi_{A})\langle e_{\tilde \beta}\rangle & \beta \in \cO_1^\times, 
\end{cases}$$
 satisfies all the properties as mentioned in Theorem~\ref{main_theorem-2}.  Therefore we can make use of Lemmas~\ref{lem:repres. of CSL/KL} and \ref{lem:calculation of Delta1 and 2} in that case also. We encode obtained information in this case in Table \ref{table1:SL2-O-2}. Next, Table \ref{table2:SL2-O-2} contains the information about the numbers and dimensions of primitive irreducible representations of $\SL_2(\cO_2).$
\begin{table}[h]
\centering
	\begin{tabular}{|c|c|c|c|c|c|c|}
		\hline
		Orbit Rep.  & $|C_{\SL_2(\cO_1)}(A)|$ & $|\mathbb M_A|$&$\theta_\lambda$&$|\Delta_1|$&$|\Delta_2|$& $\frac{|\SL_2(\cO_2)|}{|\mathbb M_A|}$\\
		\hline
		$(1,0,1)$&$1$&$16$&$1$&$2$&$0$&$3$\\
		\hline
		$(1,1,1)$&$3$&$3\times 16$&$1$&$2$&$1$&$1$\\
		\hline
		$(1,0,0)$&$2$&$16$&--&$2$&$0$&$3$\\
		\hline
	\end{tabular}
\caption{}
\label{table1:SL2-O-2}
\end{table}

\begin{table}[h]
\centering
	\begin{tabular}{|c|c|c|c|}
		\hline
		\multirow{2}{*}{Orbit Rep.}  & 	\multicolumn{3}{c|}{\# Irred. Rep. of $\SL_2(\cO_2)$ with} \\
		\cline{2-4}
		&dim. $=1$ &dim.  $=2$&dim. $=3$\\
		\hline
		$(1,0,1)$&$0$&$0$&$2$\\
		\hline
		$(1,1,1)$&$2$&$1$&$0$\\
		\hline
		$(1,0,0)$&$0$&$0$&$2$\\
		\hline
		Total \# &$2$&$1$&$4$\\
		\hline
	\end{tabular}
\caption{For primitive representations of $\SL_2(\cO_2)$}
\label{table2:SL2-O-2}
\end{table}
From above, combined with the general methods of constructing representations of $\SL_2(\cO_2)$ and the results of groups algebra $\mathbb C[\SL_2(\cO_1)],$ we see that $\mathbb C[\SL_2(\mathbb Z/2^{2}\mathbb Z)]\cong \mathbb C[\SL_2(\mathbb F_2[t]/(t^{2}))].$

\subsection{ $\SL_2(\cO_{ 4})$ with $\cO = \mathbb{Z}_2$}\label{case l=2 and q=2}

\subsubsection{Orbits}
From Section~\ref{invertible-orbits-number-field}, the orbit representatives for $A \in \Sigma$ with $\mathrm{trace}(A)$ invertible,  is given by the set 
$\{ (1,0,1), (1,1,1) \}.$
Note that $ \pi^{\ee}\cO_2=\pi \cO_2.$ Also, observe that 
\begin{equation}\label{determinant-O2}
\{x^2-\alpha y^2 \mid x,y \in \cO_{2}\}\cap \cO_2^\times= \begin{cases}\{1\} &   \mathrm{if}\,\, \alpha =0,3, \\
\{1,3\} &  \mathrm{if}\,\, \alpha =1,2  .\end{cases} 
\end{equation}
Therefore by Section~\ref{non-invertible-orbits-number-field}, the orbit representatives for $A \in \Sigma$ with $\mathrm{trace}(A)$ non-invertible  are given by the set 
$\{ (1,0,0), (3,0,0), (1,1,0),(1,2,0), (1,3,0), (3,3,0)\}.$

\begin{proposition}
Let $A=(a,\alpha,\beta) \in \Sigma.$ Then 
	\begin{enumerate}
		\item $C_{\SL_2(\cO_{4})} (\psi_{[A]})=\begin{cases} C_{\SL_2(\cO_{4})} (\psi_{A})&  \mathrm{for}\,\, \beta \in \cO_2^\times,\\
		C_{\SL_2(\cO_{4})} (\psi_{A})\langle e_2\rangle &  \mathrm{for}\,\, \beta \in 2 \cO_2.
		\end{cases}$
		\item For $A \in\{ (1,0,1), (1,1,1)\}\cup \{ (1,1,0), (1,3,0), (3,3,0)\},$ $\mathbb M_A =C_{\SL_2(\cO_{4})} (\psi_{[A]}).$
		\item  For $A \in\{ (1,0,0), (3,0,0),(1,2,0)\},$ $\mathbb M_A = C_{\SL_2(\cO_{4})} (\psi_{A}).$
	\end{enumerate}
	
\end{proposition}
\begin{proof}
	(1) will easily follows from Lemma~\ref{lem:stabilizer-form} and  Proposition~\ref{prop:charactrization of h_Atilda^ell and h_Atilda^ell'}(2). Now we proceed to prove  (2) and (3).
	For $A \in\{ (1,0,1), (1,1,1)\},$ the result follows from (1) and the fact that $C_{\SL_2(\cO_{4})} (\psi_{A})\subseteq \mathbb {M}_A \subseteq C_{\SL_2(\cO_{4})} (\psi_{[A]}).$
 Let  $A \in \{ (1,1,0),  (1,3,0), (3,3,0)\}\cup \{ (1,0,0), (3,0,0),(1,2,0)\}.$ Note that  $\beta=0$ always. Therefore by Theorem~\ref{S_A-in-number}(2), we obtain the following: 
\[
E_{\tilde{A}} = \begin{cases} \{ 0,  2 \}+ 2^2 \cO_4&  \mathrm{for}\,\, \alpha = 1 \,\, \mathrm{mod}\,(2),\\
2^2 \cO_4&   \mathrm{for}\,\, \alpha \neq1  \,\, \mathrm{mod}\,(2).
\end{cases}
	\]
Here we used the fact that $\xi=1$ for $\cO = \mathbb{Z}_2.$	
The result now follows from (1).
\end{proof}

 We encode obtained information from  Lemmas~\ref{gl-centralizer-cardinality}, \ref{lem:repres. of CSL/KL} and \ref{lem:calculation of Delta1 and 2} in this case in Table \ref{table1:SL2-O-4n}. Next, Table \ref{table2:SL2-O-4n} contains the information about the numbers and dimensions of primitive irreducible representations of $\SL_2(\mathbb Z/2^{4}\mathbb Z)$.

\begin{table}[h]
\centering
	\begin{tabular}{|c|c|c|c|c|c|c|}
		\hline
		Orbit Rep.  & $|C_{\SL_2(\mathbb Z/2^{2}\mathbb Z)}(A)|$ & $|\mathbb M_A|$&$\theta_\lambda$&$|\Delta_1|$&$|\Delta_2|$& $\frac{|\SL_2(\mathbb Z/2^{4}\mathbb Z)|}{|\mathbb M_A|}$\\
		\hline
		$(1,0,1)$&$2$&$2^7$&--&$2$&$0$&$3\times 2^3$\\
	\hline
	$(1,1,1)$&$3\times 2$&$3\times 2^7$&--&$3\times 2$&$0$&$2^3$\\
	\hline
	$(1,1,0)$&$2^2$&$2^9$&$2^2$&$2^3$&$0$&$3\times 2$\\
	\hline
	$(1,2,0)$&$2^2$&$2^8$&--&$2$&$0$&$ 3\times 2^2$\\
	\hline
	$(1,3,0),(3,3,0)$&$2^3$&$2^{10}$&$2^2$&$2^3$&$2$&$3$\\
	\hline
	$(1,0,0),(3,0,0)$&$2^3$&$2^9$&--&$2^2$&$0$&$3\times 2$\\
	\hline
\end{tabular}
\caption{}
\label{table1:SL2-O-4n}
\end{table}

\begin{table}[h]
\centering
	\begin{tabular}{|c|c|c|c|c|c|c|}
		\hline
		\multirow{2}{*}{Orbit Rep.}  &	\multirow{2}{*}{\#Orbits} & 	\multicolumn{5}{c|}{\# Irred. Rep. of $SL_2(\mathbb Z/2^{4}\mathbb Z)$ with dim. $=$} \\
		\cline{3-7}
		&& $2^3$ & $3$& $3\times 2$& $3\times 2^2$& $3\times 2^3$\\
		\hline
		$(1,0,1)$&$1$&$0$&$0$&$0$&$0$&$2$\\
		\hline
		$(1,1,1)$&$1$&$6$&$0$&$0$&$0$&$0$\\
		\hline
		$(1,1,0)$&$1$&$0$&$0$&$8$&$0$&$0$\\
		\hline
		$(1,2,0)$&$1$&$0$&$0$&$0$&$2$&$0$\\
		\hline
		$(1,3,0),(3,3,0)$&$2$&$0$&$8$&$2$&$0$&$0$\\
		\hline
		$(1,0,0),(3,0,0)$&$2$&$0$&$0$&$4$&$0$&$0$\\
		\hline
	\multicolumn{2}{|c|}{Total \# }	& $6$ & $16$ & $20$ & $2$ & $2$\\
		\hline
	\end{tabular}
\caption{For primitive representations of $\SL_2(\mathbb Z/2^{4}\mathbb Z)$}
\label{table2:SL2-O-4n} 
\end{table}

\subsection{  $\SL_2(\cO_{ 4})$ with $ \cO=\mathbb F_2 \llbracket t \rrbracket $}

\subsubsection{Orbits}
From Section~\ref{invertible-orbits-function-field}, the orbit representatives for $A \in \Sigma$ with $\mathrm{trace}(A)$ invertible are
$ \bigcup_{\beta \in \cO_2^\times }\{ (1,0,\beta), (1,1,\beta) \}.$
Now for $\beta \in t \cO_2,$ observe that 
$$| \{x^2+\beta x \mid x \in\cO_2\} |=2 \,\, \mathrm{ and }\,\, \{x^2+\beta x \mid x \in\cO_2\} = \mathbb F_2 \,\, \mathrm{mod}\,(t).$$  
Therefore by equations (\ref{condition-1}), (\ref{condition-2}) and (\ref{condition-3}), all orbit representatives for $A \in \Sigma$ with $\mathrm{trace}(A)$ non-invertible are contained in 
$W:=\{ (a,\alpha,\beta) \mid a \in \cO_2^\times\,\, \mathrm{ and }\,\, \alpha, \beta \in \pi \cO_2 \}.$
Also, note that  for  $ (a,\alpha,\beta), (a',\alpha',\beta')\in W,$ $[ (a,\alpha,\beta)]=[ (a',\alpha',\beta')]$ implies $\alpha'=\alpha$ and $\beta'=\beta.$
Now for $\alpha= t u$ and $\beta=t v,$ Observe that

$$ \{x^2+\beta xy +\alpha y^2 \mid x,y \in \cO_2\}\cap \cO_2^\times=\begin{cases}\mathbb F_2^\times &   \mathrm{if}\,\,  u, v \in t \cO_2,\\
\mathbb F_2^\times \{ 1+\pi ( v y +u y^2) \mid y \in \cO_2\} &  \mathrm{if}\,\,  u,v \in  \cO_2^\times, \\
\cO_2^\times&   \mathrm{otherwise}.
\end{cases}
$$
Therefore by equations (\ref{condition-1}), (\ref{condition-2}) and (\ref{condition-3}), all orbit representatives for $A \in \Sigma$ with $\mathrm{trace}(A)$ non-invertible are: 
\begin{enumerate}
	\item $\{ (1 ,0,0), (1+t ,0,0)  \} ;$
	\item $\{ (1,t ,0),(1,0,t )   \};$
	\item $ \{ (1,t ,t ),(1+t ,t ,t )\}.$
\end{enumerate}
We say $A$ is of type $(i)$ if $A$ is in the $(i)^{th}$  set above.
\begin{proposition}
	For  $A=(a,\alpha,\beta) \in \Sigma,$ 
	\begin{enumerate}
		\item $C_{\SL_2(\cO_{4})} (\psi_{[A]})=\begin{cases} C_{\SL_2(\cO_{4})} (\psi_{A})\langle e_{\tilde \beta}\rangle &  \mathrm{for}\,\, \beta \in \cO_2^\times,\\
		C_{\SL_2(\cO_{4})} (\psi_{A}) \langle e_t \rangle &  \mathrm{for}\,\, \beta \in t \cO_2,
		\end{cases}$\\
		where $\tilde \beta$ is a fixed lift of $\beta.$
		
		\item $\mathbb M_A=\begin{cases}
		C_{\SL_2(\cO_{4})} (\psi_{[A]})&  \mathrm{if}\,\, \beta\in  (\cO_2^\times)^2 =\{1\},\\
		C_{\SL_2(\cO_{4})} (\psi_{A})&  \mathrm{otherwise}.
		\end{cases}$
	\end{enumerate}
	
\end{proposition}
\begin{proof}
	(1) follows from Lemma~\ref{lem:stabilizer-form} and  Proposition~\ref{prop:charactrization of h_Atilda^ell and h_Atilda^ell'}(1). To prove  (2), first assume $ \beta \in \cO_2^\times.$ Then 
	note that for  $\lambda= \tilde \beta \in \cO_4^\times,$ we have $\bol i_\lambda =\bol k =0<1=\bol s  $ and $\bol j_\lambda =\ell'=2.$ Therefore $\tilde \beta$ satisfies  \textit{(II)} and \textit{(III)} of Theorem~\ref{thm:simplification of extension conditions}. Now by \textit{(I)} of Theorem~\ref{thm:simplification of extension conditions},
	$\tilde \beta \in E_{ \tilde A}$ if and only if 
	$\beta\in  (\cO_2^\times)^2.$ Therefore for $ \beta \in \cO_2^\times,$ (2) follows from the fact that $\mathbb M_A = C_{\SL_2(\cO_{2\ell})}(\psi_{A}) \mathbb E_{\tilde{A}}.$
For $\beta \in t \cO_2,$ observe that  \textit{(I)} of Theorem~\ref{thm:simplification of extension conditions} is not satisfied for $\lambda=t.$ Therefore $t \notin E_{ \tilde A}.$ Hence (2) holds for  $\beta \in t \cO_2.$
\end{proof}
The obtained information from  Lemmas~\ref{lem:repres. of CSL/KL} and \ref{lem:calculation of Delta1 and 2} in this case is given in Table~\ref{table1:SL2-O-4f}. Table \ref{table2:SL2-O-4f} contains the information about the numbers and dimensions of primitive irreducible representations of $\SL_2(\mathbb F_2[t]/(t^{4}))$. 

\begin{table}[h]
\centering
		\begin{tabular}{|c|c|c|c|c|c|c|}
			\hline
			Orbit Rep.  & $|C_{\SL_2(\mathbb F_2[t]/(t^{2}))}(A)|$ & $|\mathbb M_A|$&$\theta_\lambda$&$|\Delta_1|$&$|\Delta_2|$& $\frac{|\SL_2(\mathbb F_2[t]/(t^{4}))|}{|\mathbb M_A|}$\\
		\hline
			$(1,0,1)$&$2$&$2^8$&$2$&$2^2$&$0$&$3\times 2^2$\\
			\hline
			$(1,1,1)$&$3\times 2$&$3\times 2^8$&$2$&$2^2$&$2$&$2^2$\\
			\hline
			$(1,0,1+t)$&$2$&$2^7$&--&$1$&$0$&$3\times 2^3$\\
			\hline
			$(1,1,1+t)$&$3\times 2$&$3\times 2^7$&--&$3$&$0$&$2^3$\\
			\hline
			type $(2)$&$2^2$&$2^8$&--&$2$&$0$&$3\times 2^2$\\
			\hline
			type $(1)$ or $(3)$ &$2^3$&$2^9$&--&$2^2$&$0$&$3\times 2$\\
			\hline
		\end{tabular}
\caption{}
\label{table1:SL2-O-4f}
\end{table}
	
\begin{table}[h]
\centering
		\begin{tabular}{|c|c|c|c|c|c|c|}
			\hline
			\multirow{2}{*}{Orbit Rep.} &  	\multirow{2}{*}{\#Orbits} & 	\multicolumn{5}{c|}{\# Irred. Rep. of $SL_2(\mathbb F_2[t]/(t^{4}))$ with dim.$=$} \\
				\cline{3-7}
			& & $2^2$& $2^3$ & $3\times 2$& $3\times 2^2$& $3\times 2^3$\\
			\hline
			$(1,0,1)$&$1$&$0$&$0$&$0$&$4$&$0$\\
			\hline
			$(1,1,1)$&$1$&$4$&$2$&$0$&$0$&$0$\\
			\hline
			$(1,0,1+t)$&$1$&$0$&$0$&$0$&$0$&$1$\\
			\hline
			$(1,1,1+t)$&$1$&$0$&$3$&$0$&$0$&$0$\\
			\hline
			type $(2)$&$2$&$0$&$0$&$0$&$2$&$0$\\
			\hline
			type $(1)$ or $(3)$&$4$&$0$&$0$&$4$&$0$&$0$\\
			\hline
				\multicolumn{2}{|c|}{Total \# }& $4$ & $5$ & $16$ & $8$ & $1$ \\
			\hline
		\end{tabular}
\caption{For primitive representations of $\SL_2(\mathbb F_2[t]/(t^{4}))$}
\label{table2:SL2-O-4f}	
\end{table}

\subsection{ $\SL_2(\cO_{6})$ with $\cO = \mathbb{Z}_2$}
\subsubsection{Orbits}
From Section~\ref{invertible-orbits-number-field}, the orbit representatives for $A \in \Sigma$ with $\mathrm{trace}(A)$ invertible are $\{ (1,0,1),(1,1,1),(1,2,1),(1,3,1)\}.$

Now observe that 
\begin{equation}\label{determinant-O3}
\{x^2-\alpha y^2 \mid x,y \in \cO_{3}\}\cap \cO_3^\times= \begin{cases}\{1,1-\alpha\} &  \mathrm{if}\,\, \alpha \in 2\cO_3, \\
\{1,5,-\alpha,4-\alpha\} & \mathrm{if }\,\,\alpha \in \cO_3^\times .  \end{cases} 
\end{equation}
Therefore from Section~\ref{non-invertible-orbits-number-field}, the orbit representatives for $A \in \Sigma$ with $\mathrm{trace}(A)$ non-invertible are:
\begin{enumerate}
	\item $\{ (1,0,0), (3,0,0), (5,0,0), (7,0,0)\};$
	\item $\{ (1,2,0), (5,2,0)\};$
	\item $\{ (1,4,0), (3,4,0)\};$
	\item $\{ (1,6,0), (5,6,0)\};$
	\item $\{ (1,1,0), (1,5,0)\};$
	\item $\{  (1,3,0),  (3,3,0), (1,7,0),(3,7,0)\}.$
\end{enumerate}
As before, we say $A$ is of type $(i)$ if $A$ is in the $(i)^{th}$  set above.

\begin{proposition}
	For  $A=(a,\alpha,\beta) \in \Sigma,$ 
	\begin{enumerate}
		\item $C_{\SL_2(\cO_{6})} (\psi_{[A]})=\begin{cases} C_{\SL_2(\cO_{6})} (\psi_{A}) &  \mathrm{for}\,\, \beta \in \cO_3^\times,\\
		C_{\SL_2(\cO_{6})} (\psi_{A})\langle e_{2^2} \rangle &  \mathrm{for}\,\, \beta \in 2 \cO_3.
		\end{cases}$
		\item  $\mathbb M_A = C_{\SL_2(\cO_{6})} (\psi_{A}).$
	\end{enumerate}
	
\end{proposition}
\begin{proof}
	(1)  follows from Lemma~\ref{lem:stabilizer-form} and  Proposition~\ref{prop:charactrization of h_Atilda^ell and h_Atilda^ell'}(2). Also, (2) follows from Theorem~\ref{S_A-in-number}.	
\end{proof}
From Lemma~\ref{lem:image-det-map} and  (\ref{determinant-O3}) we have
$$|\det(C_{\GL_2 (\cO_{3})}(A))| =\begin{cases}
4 & \mathrm{for}\,\, \beta \in \cO_3^\times\,\, \mathrm{or}\,\, \mathrm{type}\,\,5,\\
1&   \mathrm{for}\,\,\mathrm{type}~1,\\
2 & \mathrm{otherwise }.
\end{cases}$$
From Lemma~\ref{gl-centralizer-cardinality}, 
$$|C_{\GL_2(\cO_{ 3})} (A)| =\begin{cases}
16 & \mathrm{for}\,\, \beta \in \cO_3^\times \,\, \mathrm{ and }\,\,\alpha \in 2\cO_3,\\
48 &   \mathrm{for}\,\, \beta \in \cO_3^\times \, \, \mathrm{ and }\,\,\alpha \in \cO_3^\times,\\
32 &  \mathrm{for}\,\,\beta \in 2\cO_3.
\end{cases}$$
Therefore 
$$|C_{\SL_2(\cO_{ 3})} (A)|=\frac{|C_{\GL_2(\cO_{ 3})} (A)|}{|\det(C_{\GL_2 (\cO_{3})}(A))|} =\begin{cases}
4 &   \mathrm{for}\,\, \beta \in \cO_3^\times \,\, \mathrm{ and }\,\,\alpha \in 2\cO_3,\\
12 &  \mathrm{for}\,\, \beta \in \cO_3^\times \,\, \mathrm{ and }\,\,\alpha \in \cO_3^\times,\\
32&  \mathrm{for}\,\,\mathrm{type}~1,\\
8 &   \mathrm{for}\,\, \mathrm{type}~5,\\
16 &   \mathrm{otherwise }.
\end{cases}$$

Tables~\ref{table1:SL2-O-6n} and \ref{table2:SL2-O-6n}, parallel to Tables~\ref{table1:SL2-O-4f} and \ref{table2:SL2-O-4f}, contain corresponding information for $\SL_2(\mathbb Z/2^{6} \mathbb Z)$.

\begin{table}[h]
\centering
		\begin{tabular}{|c|c|c|c|c|c|}
			\hline
			Orbit Rep.  & $|C_{\SL_2(\mathbb Z/2^{3} \mathbb Z)}(A)|$ & $|\mathbb M_A|$&$|\Delta_1|$&$|\Delta_2|$& $\frac{|\SL_2(\mathbb Z/2^{6} \mathbb Z)|}{|\mathbb M_A|}$\\
			\hline
			$(1,0,1),(1,2,1)$&$2^2$&$ 2^{11}$&$2^2$&$0$&$3\times 2^5$\\
		\hline
		$(1,1,1),(1,3,1)$&$3\times 2^2$&$3\times  2^{11}$&$3\times 2^2$&$0$&$2^5$\\
		\hline
		type $(1)$&$2^5$&$2^{14}$&$2^4$&$0$&$3\times 2^2$\\
		\hline
		type $(5)$&$2^3$&$2^{12}$&$2^2$&$0$&$3\times 2^4$\\
		\hline
		type $(2),$$(3),$$(4)$ \& $(6)$&$2^4$&$2^{13}$&$2^3$&$0$&$3\times 2^3$\\
			\hline
		\end{tabular}
\caption{}
\label{table1:SL2-O-6n}
\end{table}

\begin{table}[h]
\centering
		\begin{tabular}{|c|c|c|c|c|c|c|}
			\hline
			\multirow{2}{*}{Orbit Type}  & 	\multirow{2}{*}{\#Orbits} &
			\multicolumn{5}{c|}{\# Irred. Rep. of $SL_2(\mathbb Z/2^{6} \mathbb Z)$ with dim. $=$} \\
			\cline{3-7}
			&&$2^5$ &$3\times 2^2$&$3\times 2^3$ &$3\times 2^4$& $3\times 2^5$\\
			\hline
			$(1,0,1),(1,2,1)$&$2$&$ 0$&$ 0$&$ 0$&$0$&$4$\\
			\hline
			$(1,1,1),(1,3,1)$&$2$&$12$&$0$&$0$&$0$&$0$\\
			\hline
			type $(1)$&$4$&$0$&$16$&$0$&$0$&$0$\\
			\hline
			type $(5)$&$2$&$0$&$0$&$0$&$4$&$0$\\
			\hline
			type $(2),$$(3),$$(4)$ \& $(6)$&$10$&$0$&$0$&$8$&$0$&$0$\\
			\hline	
				\multicolumn{2}{|c|}{Total \# }&$24$ & $64$ & $80$ & $8$&$8$ \\
			\hline
		\end{tabular}
	\caption{For primitive representations of $\SL_2(\mathbb Z/(2^{6} \mathbb Z))$}
\label{table2:SL2-O-6n}
\end{table}

\subsection{\  $\SL_2(\cO_{6})$ with $\cO= \mathbb F_2 \llbracket t \rrbracket $}
\subsubsection{Orbits}
From Section~\ref{invertible-orbits-function-field}, the orbit representatives for $A \in \Sigma$ with $\mathrm{trace}(A)$ invertible are
$\bigcup_{\beta \in \cO_3^\times }\{ (1,0,\beta), (1,1,\beta) \}$. Now for $\beta \in t \cO_3,$ observe that 
$$ \{x^2+\beta x \mid x \in\cO_3\} =\begin{cases}\{0,1,t^2,1+t^2\} &    \mathrm{for}\,\,\beta=0,t^2,\\
\{0,1+t\}  &   \mathrm{for}\,\,\beta=t,\\
\{0,1+t+t^2\}  &   \mathrm{for}\,\,\beta=t+t^2.\\
\end{cases}$$  
Therefore by equations (\ref{condition-1}), (\ref{condition-2}) and (\ref{condition-3}), all orbit representatives for $A \in \Sigma$ with $\mathrm{trace}(A)$ non-invertible are  contained in 
$$W:=\bigcup_{a \in \cO_3^\times }\left( \{ (a,0,\beta), (a,t,\beta)\}_{\beta =0,t^2}\cup \{(a,0,\beta),(a,t,\beta),(a,t^2,\beta),(a,t+t^2,\beta) \}_{\beta =t, t+t^2}\right).$$
Also, note that  for  $ (a,\alpha,\beta), (a',\alpha',\beta')\in W,$ $[ (a,\alpha,\beta)]=[ (a',\alpha',\beta')]$ implies $\alpha'=\alpha$ and $\beta'=\beta.$
Now for $ (a,\alpha,\beta)\in W,$ observe that
$$ \{x^2+\beta xy +\alpha y^2 \mid x,y \in \cO_3\}\cap \cO_3^\times=\begin{cases}(\cO_3^\times)^2 &    \mathrm{if}\,\,\mathrm{both}\,\, \alpha, \beta\,\, \mathrm{ are}\,\, \mathrm{ either}\,\, \mathrm{ in }\,\, t^2 \cO_3 \,\,  \mathrm{ or}\,\,\mathrm{ in }\,\, t\cO_3^\times,\\
\cO_3^\times&   \mathrm{otherwise}.
\end{cases}
$$
Therefore by equations (\ref{condition-1}), (\ref{condition-2}) and (\ref{condition-3}), all orbit representatives for $A \in \Sigma$ with $\mathrm{trace}(A)$ non-invertible are: 
\begin{enumerate}
	\item $\{ (1,0,0) ,(1+t,0,0)\};$
	\item  $ \{(1,0,t^2) ,(1+t,0,t^2)\}; $
	\item  $\{ (1,t,0),(1,t,t^2)\};$
	\item $\{ (1,0,\beta),(1,t^2,\beta)\mid \beta=t,t+t^2\};$
	\item $\{ (1,t,\beta),(1+t,t,\beta),(1,t+t^2,\beta),(1+t,t+t^2,\beta)\mid \beta=t,t+t^2\}  .$
\end{enumerate}
Again, we say $A$ is of type $(i)$ if $A$ is in the $(i)^{th}$  set above.	 
\begin{proposition}
	For  $A=(a,\alpha,\beta) \in \Sigma,$ 
	\begin{enumerate}
		\item $C_{\SL_2(\cO_{r})} (\psi_{[A]})=\begin{cases} C_{\SL_2(\cO_{r})} (\psi_{A})\langle e_{\tilde \beta}\rangle &  \mathrm{for}\,\,\beta \in \cO_3^\times,\\
		C_{\SL_2(\cO_{r})} (\psi_{A})\{e_z \mid z=0,\tilde \beta,t^2, \tilde \beta+t^2 \} &   \mathrm{for}\,\,\beta \in \pi \cO_3,
		\end{cases}$\\
		where $\tilde \beta$ is a fixed lift of $\beta.$
		\item For $\beta \in \cO_3^\times,$ 
		$ \mathbb M_A = \begin{cases}
		C_{\SL_2(\cO_{r})} (\psi_{[A]})  &  \mathrm{if}\,\,\beta \in (\cO_3^\times)^2,\\
		C_{\SL_2(\cO_{r})} (\psi_{A}) &   \mathrm{if}\,\,\beta \notin (\cO_3^\times)^2.
		\end{cases}$
		\item For $\beta \in t\cO_3,$
		$ \mathbb M_A = \begin{cases}
		C_{\SL_2(\cO_{r})} (\psi_{A})\langle e_{t^2}\rangle &  \mathrm{if}\,\,A \,\, \mathrm{is}\,\, \mathrm{of}\,\, \mathrm{type}\,\, (2)\,\, \mathrm{or}\,\, (3),\\
		C_{\SL_2(\cO_{r})} (\psi_{A}) &  \mathrm{if}\,\,A\,\, \mathrm{is}\,\, \mathrm{of}\,\, \mathrm{type}\,\,(1), \,\, (4)\,\,\mathrm{or}\,\, (5).
		\end{cases}$
	\end{enumerate}
\end{proposition}
\begin{proof}
	(1) will  follows from Lemma~\ref{lem:stabilizer-form} and  Proposition~\ref{prop:charactrization of h_Atilda^ell and h_Atilda^ell'}(1). To prove  (2),  
	note that for  $\lambda= \tilde \beta \in \cO_6^\times,$ we have $\bol i_\lambda =\bol k =0<1=\bol s  $ and $\bol j_\lambda =\ell'=3.$ Therefore $\tilde \beta$ satisfies  \textit{(II)} and \textit{(III)} of Theorem~\ref{thm:simplification of extension conditions}. 
	Now by \textit{(I)} of Theorem~\ref{thm:simplification of extension conditions}, 
	$\tilde \beta \in E_{\tilde A}$ if and only if 
	$\beta\in  (\cO_3^\times)^2.$ Therefore  (2) follows.
We now proceed to prove  (3). For $A$ of type $(1)$,$(4)$ or $(5),$  there is no $z  \in\{\tilde \beta,t^2, \tilde \beta+t^2 \}\setminus\{0\}$ satisfying both  \textit{(I)} and \textit{(II)} of Theorem~\ref{thm:simplification of extension conditions}. Therefore $E_{\tilde A}=t^3 \cO_6.$ Hence the result follows in this case. 
For $A$ of type $(2)$ or $(3),$  $\beta =t^2.$ Therefore by (1), we obtain that 
	$C_{\SL_2(\cO_{r})} (\psi_{[A]})=C_{\SL_2(\cO_{r})} (\psi_{A})\langle e_{t^2}\rangle.$ Now take $\lambda =t^2.$ It is easy to  show that this $\lambda$ satisfies \textit{(I)}, \textit{(II)} and \textit{(III)} of Theorem~\ref{thm:simplification of extension conditions}. Therefore  $E_{\tilde A}=\{0, t^2\}+t^3\cO_6$. Hence the result follows. 
\end{proof}
We encode obtained information from  Lemmas~\ref{lem:repres. of CSL/KL} and \ref{lem:calculation of Delta1 and 2} in Table \ref{table1:SL2-O-6f}. Next, Table \ref{table2:SL2-O-6f} contains the information about the numbers and dimensions of primitive irreducible representations of $\SL_2(\mathbb F_2[t]/(t^{6}))$.

\begin{table}[h]
\centering
		\begin{tabular}{|c|c|c|c|c|c|c|}
			\hline
			Orbit Rep.  & $|C_{\SL_2(\mathbb F_2[t]/(t^{3}))}(A)|$ & $|\mathbb M_A|$&$\theta_\lambda$&$|\Delta_1|$&$|\Delta_2|$& $\frac{|\SL_2(\mathbb F_2[t]/(t^{6}))|}{|\mathbb M_A|}$\\
			\hline
				$(1,0,\beta)$; $\beta = 1, 1+t^2$&$2^2$&$2^{12}$&$2$&$2^2$&$1$&$3\times 2^4$\\
			\hline
			$(1,0,\beta)$; $\beta = 1+t, 1+t+t^2$&$2^2$&$2^{11}$&--&$2$&$0$&$3\times 2^5$\\
			\hline
			$(1,1,\beta)$; $\beta = 1, 1+t^2$&$3\times 2^2$&$3\times 2^{12}$&$2$&$2^2$&$5$&$ 2^4$\\
			\hline
			$(1,1,\beta)$; $\beta = 1+t, 1+t+t^2$&$3\times 2^2$&$3\times 2^{11}$&--&$3\times 2$&$0$&$ 2^5$\\
			\hline
			$(1)$&$2^4$&$2^{13}$&--&$2^3$&$0$&$3\times 2^3$\\
			\hline
			$(2)$&$2^4$&$2^{14}$&$2^{3}$&$2^4$&$2^2$&$3\times 2^2$\\
			\hline
			$(3)$&$2^3$&$2^{13}$&$2^{3}$&$2^4$&$0$&$3\times 2^3$\\
			\hline
			$(4)$&$2^3$&$2^{12}$&--&$2$&$0$&$3\times 2^4$\\
			\hline
			$(5)$&$2^4$&$2^{13}$&--&$2^2$&$0$&$3\times 2^3$\\
				\hline
		\end{tabular}
\caption{}
\label{table1:SL2-O-6f}
\end{table}
\begin{table}[h]
\centering
		\begin{tabular}{|c|c|c|c|c|c|c|c|}
			\hline	
			\multirow{2}{*}{Orbit Type}  & 	\multirow{2}{*}{\#Orbits} &
			\multicolumn{6}{c|}{\# Irred. Rep. of $SL_2(\mathbb F_2[t]/(t^{6}))$ with dim. $=$} \\
			\cline{3-8}
			&&$2^4$ &$2^5$&$3\times 2^2$ &$3\times 2^3$& $3\times 2^4$& $3\times 2^5$\\
			\hline
			$(1,0,\beta)$; $\beta = 1, 1+t^2$&$2$&$ 0$&$ 0$&$ 0$&$0$&$4$&$1$\\
			\hline
			$(1,0,\beta)$; $\beta = 1+t, 1+t+t^2$&$2$&$0$&$0$&$0$&$0$&$0$&$2$\\
			\hline
			$(1,1,\beta)$; $\beta = 1, 1+t^2$&$2$&$4$&$5$&$0$&$0$&$0$&$0$\\
			\hline
			$(1,1,\beta)$; $\beta = 1+t, 1+t+t^2$&$2$&$0$&$6$&$0$&$0$&$0$&$0$\\
			\hline
			$(1)$&$2$&$0$&$0$&$0$&$8$&$0$&$0$\\
			\hline
			$(2)$&$2$&$0$&$0$&$16$&$4$&$0$&$0$\\
			\hline
			$(3)$&$2$&$0$&$0$&$0$&$16$&$0$&$0$\\
			\hline
			$(4)$&$4$&$0$&$0$&$0$&$0$&$2$&$0$\\
			\hline
			$(5)$&$8$&$0$&$0$&$0$&$4$&$0$&$0$\\
			\hline
				\multicolumn{2}{|c|}{Total \# } & $8$ & $22$ & $32$ & $88$ & $16$&$6$ \\
 			\hline
		\end{tabular}
	\caption{For primitive representations of $\SL_2(\mathbb F_2[t]/(t^{6}))$}
	\label{table2:SL2-O-6f}	
\end{table}

\subsection{Primitive representation zeta polynomial of  $\SL_2(\cO_{2 \ell}),$ $1\leq \ell \leq 3$  }
\label{subsec:primitive-zeta-polynomials}
Recall from Section~\ref{sec:group-algebras} that the primitive representation zeta polynomial of  $\SL_2(\cO_{2\ell})$ is  defined as below.  
\[
\calp^\pr_{\SL_2(\cO_{2 \ell})}(X) = \sum_{\rho \in \mathrm{Irr}^\pr(\SL_2(\cO_{2 \ell})) } X^{\dim(\rho)},
\]
where $\mathrm{Irr}^\pr(\SL_2(\cO_{2 \ell}))$
is the set of primitive irreducible representations of $\SL_2(\cO_{2 \ell}).$ 
Summarizing the computations from the above subsections we have the primitive  representation zeta polynomial of  $\SL_2(\cO_{2 \ell}),$ $\cO=\mathbb Z_2\,\, \mathrm{or}\,\, \mathbb F_2 \llbracket t \rrbracket $ and $1\leq \ell \leq 3,$ as follows.
\begin{enumerate}
	\item For $\cO=\mathbb Z_2\,\, \mathrm{or}\,\, \mathbb F_2 \llbracket t \rrbracket ,$     $\calp^\pr_{\SL_2(\cO_{2})}(X)=4X^3 +X^2 +2 X.$
	\item $\calp^\pr_{\SL_2(\mathbb Z/2^4\mathbb Z)}(X)=2X^{24} +2X^{12} +6 X^8+20X^6+16X^3.$
	\item $\calp^\pr_{\SL_2(\mathbb F_2[t]/(t^{4}))}(X)=X^{24} +8X^{12} +5 X^8+16X^6+4X^4.$
		\item $\calp^\pr_{\SL_2(\mathbb Z/2^6\mathbb Z)}(X)=8X^{96} +8X^{48} +24 X^{32}+80X^{24}+64X^{12}.$
		\item $\calp^\pr_{\SL_2(\mathbb F_2[t]/(t^{6}))}(X)=6X^{96} +16X^{48} +22 X^{32}+88X^{24}+8X^{16}+32X^{12}.$
\end{enumerate}

\appendix 

\section{}
\label{appendix}

In this part, we explain the relation between our notations of $\bol s$ and $\bol k$ with some of the notations of \cite{2017arXiv171009112H}. 
This relation was used while writing a few results of \cite{2017arXiv171009112H} in our notations.

Assume $\cO \in \dvrtwoplus.$
Let $A = \mat 0{a^{-1} \alpha}a{\beta} \in M_2(\cO_{\ell'})$ and  $\tilde{A} = \mat 0{\tilde{a}^{-1}\tilde \alpha}{\tilde{a}}{\tilde \beta} \in M_2(\cO_r)$ be a lift of $A.$ Note that $\mathrm{trace}(A)=\beta$ and $\det(A)=\alpha.$
We recall the definition of $w(A)$ and odd depth of $A$, denoted by $\delta(A)$, from \cite{2017arXiv171009112H}.
$$
w(A)=\begin{cases}
\val(\beta)&   \mathrm{if}\,\, \beta \neq 0,\\
\ell' &   \mathrm{if}\,\, \beta = 0.\end{cases}
 $$
And $\delta(A)$ is the smallest $ i \in \left [ 0, \lfloor\frac{w(A)}{2}\rfloor \right )$ such that $(\alpha)_{2i+1}\neq 0 $. If such $i$ does not exist then we define  $\delta(A)=\lfloor\frac{w(A)}{2}\rfloor.$
Recall that $\bol k$ and $\bol s$ are defined as follows:  
\begin{eqnarray}
\bol  k & = &  \val(\beta),  \nonumber \\
\bol s & = &\begin{cases}
2 \lfloor \bol k/2 \rfloor + 1 &  \mathrm{if}\,\,   \alpha = v^2 \,\,\mathrm{mod}\,(\pi^ \bol k ),  \\ m & \mathrm{if}\,\, \alpha = v_1^2 + \pi^m v_2^2 \,\,\mathrm{mod}\,(\pi^ \bol k ) \,\,\mathrm{for}\,\,\mathrm{odd}\,\, m < \bol k \,\,\mathrm{and}\,\, v_2 \in \cO_{\ell'}^\times. 
\end{cases}\nonumber
\end{eqnarray}
We also note that every element $x\in \cO_m$ for $\cO \in \dvrtwoplus$ is of the form $x = (x)_0 + (x)_1 \pi + \cdot + (x)_{m-1} \pi^{m-1}$, where $(x)_i \in \cO_1$. Further $x \in (\cO_m)^2$ if and only if $(x)_{2i+1} = 0$ for all $i$.  
\begin{proposition}
	Given the above definitions, $w(A)=\bol k$ and $\delta(A) =\lfloor \frac{\bol s}{2} \rfloor.$
\end{proposition}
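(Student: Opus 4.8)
The plan is to prove the two equalities separately, treating $w(A)=\bol k$ first since it is immediate. Recall $\beta=\mathrm{trace}(A)$ is the entry determining $\bol k=\min\{\val(\tilde\beta),\ell'\}$. If $\beta\neq 0$ in $\cO_{\ell'}$ then $\val(\beta)<\ell'$, and any lift $\tilde\beta\in\cO_r$ satisfies $\val(\tilde\beta)=\val(\beta)<\ell'$, so $\bol k=\val(\beta)=w(A)$; if $\beta=0$ in $\cO_{\ell'}$ then $\val(\tilde\beta)\geq\ell'$, whence $\bol k=\ell'=w(A)$. This disposes of $w(A)=\bol k$ with no further work.

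For $\delta(A)=\lfloor\bol s/2\rfloor$ the whole point is a coefficient-by-coefficient reading of $\alpha=\det(A)$ through its $\pi$-adic expansion $\alpha=\sum_j(\alpha)_j\pi^j$ with $(\alpha)_j\in\mathbb F_q$. The key identity I would invoke is that in characteristic two squaring acts by Frobenius on coefficients and doubling of exponents: for $v=\sum_j(v)_j\pi^j$ one has $v^2=\sum_j(v)_j^2\,\pi^{2j}$, since all cross terms carry a factor $2=0$. Thus a square has vanishing odd-index coefficients, which is exactly the characterization of $(\cO_r)^2$ recalled in the excerpt. Because $\mathbb F_q$ is perfect, I can split $\alpha\bmod\pi^{\bol k}$ canonically into an ``even part'' $v_1^2:=\sum_{2i<\bol k}\sqrt{(\alpha)_{2i}}^{\,2}\pi^{2i}$, which is genuinely a square, plus an ``odd part'' $\sum_{2i+1<\bol k}(\alpha)_{2i+1}\pi^{2i+1}$. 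The odd part, if nonzero, equals $\pi^m v_2^2\bmod\pi^{\bol k}$ where $m$ is the smallest odd index with $(\alpha)_m\neq0$ and $v_2:=\sum_j\sqrt{(\alpha)_{m+2j}}\,\pi^j$, so $(v_2)_0=\sqrt{(\alpha)_m}\neq0$ and $v_2$ is a unit. This shows the two clauses defining $\bol s$ are exhaustive and that the integer $m$ in the second clause is forced to be this smallest odd index, i.e. $\bol s=m=2\delta(A)+1$ whenever such an index exists.

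The remaining step is to match the two regimes and check the floor bookkeeping. If the odd part vanishes mod $\pi^{\bol k}$, i.e. $\alpha$ is a square mod $\pi^{\bol k}$, then there is no $i$ with $0\leq i<\lfloor w(A)/2\rfloor$ and $(\alpha)_{2i+1}\neq0$, so by definition $\delta(A)=\lfloor w(A)/2\rfloor=\lfloor\bol k/2\rfloor$, while $\bol s=2\lfloor\bol k/2\rfloor+1$ gives $\lfloor\bol s/2\rfloor=\lfloor\bol k/2\rfloor$, so the two agree. Otherwise $\delta(A)=(m-1)/2=\lfloor m/2\rfloor=\lfloor\bol s/2\rfloor$. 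I expect the only genuinely fiddly point to be verifying that the range condition $0\leq i<\lfloor w(A)/2\rfloor$ defining $\delta(A)$ corresponds exactly to the condition $m<\bol k$ appearing in the definition of $\bol s$; this is a short parity argument showing $2i+1<\bol k\iff i<\lfloor\bol k/2\rfloor$ for both parities of $\bol k$, and it is what guarantees that ``no admissible odd index'' for $\delta$ occurs precisely when $\alpha$ is a square mod $\pi^{\bol k}$. There is no conceptual obstacle here, only the care needed to keep the even/odd parities and the floor functions consistent.
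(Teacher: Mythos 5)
Your proposal is correct and follows essentially the same route as the paper: both arguments reduce $w(A)=\bol k$ to the observation that $\val(\tilde\beta)\geq \ell'$ exactly when $\beta=0$, and both prove $\delta(A)=\lfloor \bol s/2\rfloor$ by the characteristic-two fact that squares are the elements with vanishing odd-index coefficients together with the parity bookkeeping $2i+1<\bol k \iff i<\lfloor \bol k/2\rfloor$, your two regimes (odd part of $\alpha$ vanishing mod $\pi^{\bol k}$ or not) coinciding with the paper's case split $\delta(A)=\lfloor w(A)/2\rfloor$ versus $\delta(A)<\lfloor w(A)/2\rfloor$. The only (welcome) extra in your version is that by extracting $v_2$ explicitly from the odd coefficients you show the $m$ in the second clause of the definition of $\bol s$ is forced to be the smallest odd index with $(\alpha)_m\neq 0$, i.e.\ that $\bol s$ is well defined, a point the paper leaves implicit.
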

\begin{proof}

For $\beta\neq 0, $ clearly $w(A)=\bol k.$
For $\beta = 0\in \cO_{\ell'},$ we have  $w(A)= \ell' = \bol k.$ Now we proceed to prove the result for $\delta(A)$. For $\delta(A)=\lfloor\frac{w(A)}{2}\rfloor,$ by definition of  $\delta(A),$ we have  $(\alpha)_{2i+1}= 0$ for all $0\leq i <\lfloor\frac{w(A)}{2}\rfloor.$ Therefore $\alpha \in \cO_{\ell'}^2  \,\,\mathrm{mod}\,(\pi^{2\lfloor\frac{w(A)}{2}\rfloor+1}).$ Since $2\lfloor\frac{w(A)}{2}\rfloor+1=2\lfloor\frac{\bol k}{2}\rfloor+1\geq \bol k,$ we obtain $\alpha \in \cO_{\ell'}^2  \,\,\mathrm{mod}\,(\pi^{\bol k}).$ Therefore by definition of $\bol s$ we have $\bol s=2\lfloor\frac{\bol k}{2}\rfloor+1$ in this case. Combining all this, we obtain that $\delta(A) = \lfloor\frac{w(A)}{2}\rfloor = \lfloor\frac{\bol k}{2}\rfloor = \lfloor\frac{\bol s}{2}\rfloor $ for $\delta(A)=\lfloor\frac{w(A)}{2}\rfloor.$
For $\delta(A)<\lfloor\frac{w(A)}{2}\rfloor,$ by definition of $\delta(A),$ we must have $(\alpha)_{2\delta(A)+1}\neq 0$ and $(\alpha)_{2i+1}= 0$ for all $0\leq i <\delta(A).$
	 Therefore  we obtain $\alpha=v_1^2 +\pi^{2\delta(A)+1}v_2^2 $ for some choice of $v_1\in \cO_{\ell'}$ and $v_2\in \cO_{\ell'}^{\times}.$ By our assumption on $\delta(A)$, we have $2\delta(A)+1 < 2\lfloor\frac{w(A)}{2}\rfloor + 1$. As both $2\delta(A)+1$ and $2\lfloor\frac{w(A)}{2}\rfloor + 1$ are odd, this implies $ 2\delta(A)+1 < 2\lfloor\frac{w(A)}{2}\rfloor \leq w(A) = \bol k$. Therefore by definition of $\bol s$ we must have $\bol s=2\delta(A)+1.$ Therefore $\delta(A)=\lfloor\frac{\bol s}{2} \rfloor$ also for  $\delta(A)<\lfloor\frac{w(A)}{2}\rfloor.$ 
\end{proof}

\bibliography{refs}{}
\bibliographystyle{siam}

\end{document}